\documentclass[11pt]{amsart}
\usepackage{hyperref}
\usepackage{amsfonts}
\usepackage{amssymb}
\usepackage{graphicx}
\usepackage{pstricks}
\usepackage{amsmath}
\usepackage{amsxtra}
\usepackage{bm, fullpage, tikz}
\usepackage{enumitem}

\linespread{1.1}

\usepackage{mathrsfs, amsthm, xifthen, verbatim, mathrsfs}
\usepackage[normalem]{ulem}

\usepackage{moreverb}

\usepackage[arrow, matrix]{xy}

\setcounter{MaxMatrixCols}{10}

\usepackage{multicol}

\newtheorem{thm}{Theorem}[section]
\newtheorem{lemma}[thm]{Lemma}
\newtheorem{prop}[thm]{Proposition}
\newtheorem{crl}[thm]{Corollary}

\theoremstyle{definition}
\newtheorem{dfn}[thm]{Definition}
\newtheorem{exm}[thm]{Example}

\newtheorem{rem}[thm]{Remark}

\newcommand{\reals}{\mathbb{R}}
\newcommand{\B}{\mathcal{B}}
\newcommand{\naturals}{\mathbb{N}}

\newcommand{\rx}{\sgr{\mathbb{R}}{\ux}}

\newcommand\ddfrac[2]{\frac{\displaystyle #1}{\displaystyle #2}}


\newcommand{\ux}{\underline{X}}

\newcommand{\spn}{\text{Span}}

\numberwithin{equation}{section}

\def\ker{\operatorname{ker}}

\def\rank{\operatorname{rank}}
\def\dim{\operatorname{dim}}

\def\max{\operatorname{max}}


\newcommand{\K}[1]{\mathcal{K}_{#1}}

\newcommand{\N}[1]{\mathcal{N}(#1)}
\newcommand{\X}[1]{\mathcal{X}(#1)}

\newcommand{\St}[1]{\mathcal{S}(#1)}
\newcommand{\sgr}[2]{#1[#2]}
\newcommand{\ringsop}[2]{\sum #1^{#2}}

\newcommand{\norm}[2]{\|\ifthenelse{\isempty{#2}}{\cdot}{#2}\|_{#1}}
\newcommand{\cl}[2]{\overline{#2}^{\ifthenelse{\isempty{#1}}{}{#1}}}
\newcommand{\Cnt}[2]{\mathrm{C}_{\ifthenelse{\isempty{#1}}{}{#1}}(#2)}
\newcommand{\Psd}[2]{\mbox{Pos}_{\ifthenelse{\isempty{#1}}{}{#1}}(#2)}
\newcommand{\Sp}[2]{\mathfrak{sp}_{\ifthenelse{\isempty{#1}}{}{#1}}(#2)}
\newcommand{\M}[2]{\mathcal{M}_{#1}(#2)}

\newcommand{\map}[3]{#1:#2\longrightarrow #3}


\newcommand\copyrighttext{%
  \footnotesize 
\textcopyright 2023. The final publication of this manuscript is available at J. Operator Theory 90:1(2023), 223--261, \href{https://doi.org/10.7900/jot.2021nov26.2392}{https://doi.org/10.7900/jot.2021nov26.2392}}
\newcommand\copyrightnotice{%
\begin{tikzpicture}
\node[text width=15cm, anchor=south] at (current page.south) {\fbox{\parbox{\dimexpr\textwidth-\fboxsep-\fboxrule\relax}{\copyrighttext}}};
\end{tikzpicture}%
}

\begin{document}

\title[The Truncated Moment Problem for Commutative $\reals$-Algebras]{The Truncated Moment Problem \\ for Unital Commutative $\reals$-Algebras}

\author[R.E. Curto, M. Ghasemi, M. Infusino, {\protect \and} S. Kuhlmann]{Ra\'ul E. Curto, Mehdi Ghasemi, Maria Infusino, {\protect
\and} Salma Kuhlmann}
\address{R.E. CURTO, Department of Mathematics, University of Iowa, Iowa City, 52246, USA}
\email{raul-curto@uiowa.edu}
\address{M. GHASEMI, Department of Mathematics and Statistics, University of Saskatchewan, Saskatoon, SK, S7N 5E6, Canada}
\email{mehdi.ghasemi@usask.ca}
\address{M. INFUSINO, Dipartimento di Matematica e Informatica, Universit\'{a} degli Studi di Cagliari, Palazzo delle Scienze, Via Ospedale 72, 09124 Cagliari}
\email{maria.infusino@unica.it}
\address{S. KUHLMANN, Fachbereich Mathematik und Statistik, Universit\"{a}t Konstanz, Universit\"{a}tstrasse 10, 78457 Konstanz, Germany}
\email{salma.kuhlmann@uni-konstanz.de }
\begin{abstract} 
We investigate when a linear functional $L$ defined on a linear subspace $B$ of a unital commutative real algebra $A$ admits an integral representation w.r.t.\! a positive Radon measure supported on a closed subset $K$ of the character space of $A$. We provide a criterion for the existence of such a representation for $L$ when $A$ is equipped with a submultiplicative seminorm. We then build on this result to prove our main theorem for $A$ not necessarily equipped with a topology. This allows us to extend well-known {classical} results on truncated moment problems. 
\end{abstract}

\subjclass[2020]{Primary:\! 44A60,\! 47A57,\! 28C05.\! Secondary:\! 46J05,\! 28E99,\! 11C99,\! 60G57.
}

\keywords{truncated moment problem, full moment problem, integral representation, linear functional. \\ \vspace{0.3cm} \copyrightnotice}

\maketitle

\emph{This paper is dedicated to the memory of Murray Marshall, who passed away in May 2015 and whose influence can be felt vividly throughout the article.}

\section*{\sc{\large{Introduction}}}

The Classical Truncated Moment Problem (TMP) dates back to the last decade of the nineteenth century, and was initially developed by a number of mathematicians, including T.J. Stieltjes, P. Chebishev, H. Hamburger, A.A. Markov, N.I. Akhiezer, M.G. Krein, A.A. Nudel'man, J.A. Shohat, J.D. Tamarkin, M. Riesz and I.S. Iohvidov. The theory ran parallel to the developments in the full moment problem, where the main focus was placed. Many decades later, renewed interest in TMP arose in connection with the so-called Subnormal Completion Problem (SCP) for unilateral weighted shifts. In 1966, J. Stampfli \cite{Sta} proved that for any three positive numbers $a<b<c$, it is always possible to build a unilateral weighted shift $W_{\alpha}$ acting on $\ell^2(\naturals_0)$, with $\alpha \in \ell^{\infty}(\naturals_0$), having initial weights $\alpha_0=a$, $\alpha_1=b$, $\alpha_2=c$, and such that $W_{\alpha}$ is subnormal. In \cite{Houston,RGWSI}, R.E. Curto and L.A. Fialkow solved the SCP for unilateral weighted shifts, by finding necessary and sufficient conditions for a finite collection of positive numbers to be the initial segment of weights of a subnormal unilateral weighted shift. Their approach was based on the fact that subnormality is detected by the existence of a positive Radon measure on the closed interval $[0,\left\|W_{\alpha}\right\|^2]$ whose moments are the moments $\gamma_k$ of the weight sequence $\alpha$, defined recursively as $\gamma_0:=1$ and $\gamma_{k+1}:=\alpha_k^2 \gamma_k \; (\textrm{for all } k \in \naturals_0)$. Thus, the subnormality of $W_{\alpha}$ is intrinsically related to a TMP. In the process, Curto and Fialkow proved the so-called Flat Extension Theorem for moment matrices, which is an essential component of their TMP theory in one and several real or complex variables. 

A few years after the Curto-Fialkow results were published, J.B. Las\-serre discovered some significant connections between real algebraic geometry, moment problems and polynomial optimization; he introduced a method known as semidefinite relaxations (see, e.g., \cite{JBL01}), which led to renewed interest in solutions of TMP, especially those with finitely atomic representing measures. The importance of polynomial optimization problems and the convenience of working with polynomials as algebraic and computational objects as well as intensive research on this area, is one of the main motivations for the study of moment problems for the algebra of polynomials. 
For ample information on the above mentioned developments, the reader is referred to \cite{Akh,tcmp1,tcmp3,tcmp4,tcmp8,Curto-Fialkow-2008,HeNi,HeLa,IKLS2017,Ioh,KimWoe,KrNu,JBL01,LassBook,Lau2,Lau3,LaMo,Put,Sch,Zalar}. 

\label{page2} A fundamental tool in all those works is positivity. Given a closed subset $K$ of $\reals^n$, a linear functional $L$ defined on a subspace of $\rx:=\reals[X_1,\ldots,X_n]$ is said to be \emph{$K-$positive} when it assumes nonnegative values in all the elements
of its domain which are nonnegative on $K$. For a set $\mathcal{A}$ of monomials in $\rx$, a closed subset $K$ of $\reals^n$, and a $K-$positive linear functional $L$ on the $\spn(\mathcal{A})$, the $\mathcal{A}$--truncated $K$--moment problem is the question of establishing whether $L$ can be represented as an integral with respect to a positive Radon measure whose support is contained in $K$. If such a measure exists then it is called a $K$--representing measure for $L$. The $\mathcal{A}-$Truncated $K-$Moment Problem terminology was introduced by J. Nie in \cite{Nie}, although he only considered the case when the set $\mathcal{A}$ is finite. When $\mathcal{A}=\{\ux^{\alpha}: \alpha\in\naturals_0^n, |\alpha|\leq d\}$, for some $d \in \naturals,$ the $\mathcal{A}$--truncated $K$--moment problem is usually known as the Classical $K$--TMP. 

The Full $K$--Moment Problem, for closed $K\subseteq\reals^n$, corresponds to the case when $\mathcal{A}=\{\ux^{\alpha}: \alpha\in\naturals_0^n \}$; that is, given a $K-$positive linear functional $L$ on $\rx$, find a criterion for the existence of a positive Radon measure $\mu$ whose support is contained in $K$, and such that $L$ is represented as $L(p)=\int p~d\mu$, for all $p\in\rx$.   

Partial answers to the $\mathcal{A}$--truncated $K$--moment problem are known. For example, when $K$ is a closed and non-compact subset of $\reals^n$ and $\mathcal{A}$
is the set of all monomials up to a certain degree $2d$ or $2d+1$, the existence of such a $K$--representing measure is proved
to be equivalent to the $K$--positive extendability of $L$ to the set of all polynomials of degree at most $2d+2$, \cite[Theorem 2.2]{Curto-Fialkow-2008}. When $K$ is compact and $\spn(\mathcal{A})$ contains a polynomial that is strictly positive on $K$, the existence of a $K$--representing measure is known to be equivalent to the $K$--positivity of $L$ (see \cite[Theorem I, p.129]{Tchakaloff}, \cite[Theorem 2.2]{Fialkow-Nie} and \cite[Algorithm 4.2]{Nie}). 

In the present article, we show that all the above solutions can be considered as particular cases of {two general results for linear functionals defined on a linear subspace of arbitrary dimension of a unital commutative $\reals$--algebras in lieu of finite 
dimensional subspaces of $\rx$}. Thus, our setting is general enough to encompass also infinite dimensional instances of the moment problem, e.g. when the algebra is not finitely generated or when the representing measure is supported on an infinite dimensional vector space. 

Infinite dimensional moment problems have been studied already in the sixties (see e.g. \cite{Col63, P64, GP64, K67, BS71, BY75, L75, BK88, Schm90}) motivated by fundamental questions in applied areas such as statistical physics and quantum mechanics. Since then there has been an extensive production on the infinite dimensional moment problems appearing in the analysis of interacting particle systems as well as in stochastic geometry, spatial ecology, neural spike trains, heterogeneous materials and random packing (see, e.g., \cite{AH08, KKO06, LM15, M04, BKM04, ST04, TS06}). Despite the vast literature devoted to the theory of the infinite dimensional moment problem, and more generally of the moment problem on unital commutative algebras (see \cite{KuLeSp11, GKM14, AJK15, CIK16, GKM16, KKL15, IKM18, Schm18, IK2020, IKKM-ArX}, just to mention a few recent developments), several questions remain open (cf. \cite{IK2017}). Let us briefly describe our contribution towards a better knowledge of the TMP in this general setting.

In Section \ref{CntPosExt}, notations and basic definitions are introduced. In Section \ref{GTMP}, we give the definition of the $B$--truncated $K$--moment problem, which is a generalization of the Classical truncated $K$--moment problem for linear functionals $L$ defined on a linear subspace $B$ of a unital commutative $\reals$--algebra $A$ and a closed subset $K$ of characters of $A$ (see Definition \ref{B-TKMP}). 
In Theorem \ref{posext} we prove a first criterion for solving this problem when a seminormed structure is present on $A$, employing techniques from the theory of positive extensions of linear functionals. The rest of the section deals with algebras that are not necessarily equipped with a seminorm, but that are considered in a setting which implies the existence of a seminormed structure. Theorem \ref{cmpttrnctmmnt} (respectively Theorem~\ref{trnctmmnt}) solves the $B$--truncated $K$--moment problem (see Definition \ref{B-TKMP}) for $K$ compact (respectively $K$ closed and non-compact) subset of characters of $A$. 

In Section \ref{SupportRepMeas} we use the Choquet-Bishop-de Leeuw Theorem to determine the nature of the support of the representing measures obtained in previous sections, in the case when $B$ is a finite dimensional subspace of the algebra of continuous real-valued functions on a compact space (Theorems \ref{generalizedTchakaloff} and \ref{thm42}).

In Section \ref{FullVSTruncated} we prove an analogue of J. Stochel's theorem \cite{JSt} about the
relation between the full moment problem and the truncated moment problem 
(see Theorem \ref{Stochel}). The notion of truncated $K$--frames is introduced to treat Stochel's result in our general setting. We then use Theorems~{\ref{cmpttrnctmmnt} and} \ref{trnctmmnt} and Theorem \ref{Stochel} to obtain a new proof of the main result in M. Putinar and F.-H. Vasilescu's paper on dimensional extensions \cite[Theorem 2.5]{PuVa} (see Theorem \ref{PuVaRevisited}), and also an alternative proof of a result of M. Marshall on localization of $\rx$ at the multiplicative set of powers of a positive polynomial \cite[Corollary~6.2.4]{MurrayBook} (see Theorem \ref{thm56}). 

Finally, we reserve Section \ref{Applications} for a discussion of several applications of our main results, ranging from the Classical TMP to the TMP for point processes, and to the SCP for $2$--variable weighted shifts. In particular, in the case of bivariate polynomials, our novel approach allows us to deal with not only the cases of the Rectangular, Triangular, and Sparse Connected TMP, but also with a new hybrid case which includes the presence of infinitely many moments in one of the variables.


\section{\sc{\large{Notation and Preliminaries}}}\label{CntPosExt}
In this article all the algebras are assumed to be commutative $\reals$--algebras with unit. 

A \emph{submultiplicative seminorm} on an algebra $A$ is a map $\rho: {A}\to{[0,\infty)}$ satisfying:
\begin{enumerate}
	\item{\label{sn1}$\textrm{ for all } a\in A \; \textrm{and } r\in\reals, \quad\rho(ra)=|r|\rho(a)$ (positive homogeneous)},
	\item{\label{sn2}$\textrm{ for all } a,b\in A,\quad\rho(a+b)\leq\rho(a)+\rho(b)$ (subadditive),}
	\item{\label{sn3}$\textrm{ for all } a,b\in A,\quad\rho(ab)\leq\rho(a)\rho(b)$ (submultiplicative).}
\end{enumerate}
Note that conditions \ref{sn1}, \ref{sn2} and \ref{sn3} above imply continuity of scalar 
multiplication, addition and multiplication  on $A$, respectively. The pair $(A,\rho)$ is called 
a \emph{seminormed algebra}.

We denote by $\X{A}$ the set of all real-valued $\reals$--algebra homomorphisms on $A$
(the character space of $A$). One can identify $\X{A}$ as a subset of $\reals^A$. It is important to add the following assumption. Throughout this article we assume that $\X{A}$ is nonempty. Instances when $\X{A}$ is empty are given by the unital commutative $\mathbb{R}$--algebra $A:=\mathbb{R}[[\underline{X}]]$ of power series in $\underline{X}$ and the Arens algebra $L^{\omega}([0,1]):=\bigcup_{p>1}L^p([0,1])$ (\cite{Ar}, \cite[Ex. 2.5.5]{Schm90}).  

For any subset $S\subseteq A$ we define 
$\K{S}$ to be the common nonnegativity set of $S$ on $\X{A}$, i.e.,
\[
	\K{S}:=\{\alpha\in\X{A}:\alpha(S)\subseteq[0,\infty)\}.
\]
Every $a\in A$ induces a map $\map{\hat{a}}{\X{A}}{\reals}$ which is defined by 
$\hat{a}(\alpha)=\alpha(a)$ for every $\alpha$ in $\X{A}$.
These are basically projection maps on the $a^{\rm th}$ component of $\X{A}$ as a subset of 
$\reals^A$. We endow $\X{A}$ with the weakest topology which makes all maps $\hat{a}$ continuous. Note that this topology 
coincides with the subspace topology on $\X{A}$ inherited from $\reals^A$ equipped with the product topology. When $A = \rx$, the characters of $A$ are in one-to-one correspondence with point evaluations, i.e., $\X{A} \cong \mathbb{R}^n$, and so $\hat{p}=p$ for every $p \in A$. As a result, we will routinely omit the \ $\hat{}$ \ when dealing with elements of $\rx$. Note that in this case, for any $S \subseteq \reals[\ux]$, $\K{S}=\{x \in \reals^n: q(x) \ge 0 \textrm{ for all } q \in S \}$.  

We consider the Borel $\sigma$--algebra on $\X{A}$ generated by this topology and, given a Borel set $K\subseteq\X{A}$, we say that a Radon measure $\mu$ has support contained in $K$ if $M\cap K=\emptyset$ implies $\mu(M)=0$ for all Borel measurable subsets $M$ in $\X{A}$.

To any subset $K\subseteq\X{A}$ and any linear subspace $B\subseteq A$ we associate $\Psd{B}{K}$ defined as
\[
	\Psd{B}{K}:=\{b\in B:\hat{b}(\alpha)\ge0,~\textrm{ for all }\alpha\in K\}.
\]
$\Psd{B}{K}$ is a cone in $B$, i.e., closed under sums and multiplication by nonnegative real numbers. We also let $\sum A^2$ denote the cone of sums of squares of elements of $A$, and we similarly define $\sum A^{2d}$ for $d \in \mathbb{N}$.  
For any $d \in \naturals$, a $\ringsop{A}{2d}$--module $S\subseteq A$ is a cone containing $1$ such that 
for every $a\in A$, $a^{2d}\cdot S\subseteq S$. A $\sum A^{2d}-$ preorder is a $\sum A^{2d}-$ module which is closed under multiplication. A linear functional $L:B\to\reals$ is said to be \emph{$K$--positive} if $L(\Psd{B}{K})\subseteq [0, +\infty)$. When $B=A$ and $K=\X{A}$, we say that $L$ is positive.

If the algebra $A$ is equipped with a submultiplicative seminorm $\rho$, then the subset $\Sp{\rho}{A}$ 
of all $\rho$--continuous elements of $\X{A}$ is called the {\it Gelfand spectrum} of $(A,\rho)$ and 
\[
	\Sp{\rho}{A}=\{\alpha\in\X{A}:|\alpha(a)|\leq\rho(a) \; \textrm{ for all } a\in A\}.
\]
Since $\Sp{\rho}{A}\subseteq\prod_{a\in A}[-\rho(a), \rho(a)]$ is closed, and the latter set is 
compact in the product topology, we see that $\Sp{\rho}{A}$ is compact. Also, if $K$ is a nonempty compact subset of $\X{A}$, then for every $a\in A$, we have $\hat{a}\restriction_K\in\Cnt{b}{K}$, where $\Cnt{b}{K}$ denotes the subalgebra of bounded continuous real-valued functions on $K$. Thus, the supremum norm
\begin{equation}\label{sup-norm-K}
	\rho_K(f):=\sup_{\alpha\in K}|f(\alpha)| \; \textrm{ for all } f\in\Cnt{b}{K}.
\end{equation}
 induces a submultiplicative seminorm on $A$. More precisely, given $a \in A$ we will let 
\begin{equation} \label{eq22}
\rho_K(a):=\rho_K(\hat{a}) \; \; \textrm{for all } a \in A.
\end{equation}
We will also let $C(X)$ denote the algebra of continuous real-valued functions on a topological space~$X$.

\begin{rem}\label{compactification}
A particularly interesting instance of the Gelfand spectrum appears when $A$ is a uniform algebra, 
i.e., when $A$ can be realized as a subalgebra of bounded continuous real-valued functions, $\Cnt{b}{X}$, equipped with the supremum norm $\rho_X$ for a completely regular space $X$.
If $A\subseteq\Cnt{b}{X}$ is a unital algebra separating points of $X$, then for each $x\in X$, $e_x$ 
(the evaluation at~$x$) induces a continuous $\reals$--algebra homomorphism on $A$; hence, $e_x$ belongs to 
$\Sp{\rho_X}{A}$. It is easy to see that $x\mapsto e_x$ maps $X$ injectively into $\Sp{\rho_X}{A}$ and 
every $f\in A$ extends continuously to $\Sp{\rho_X}{A}$. As a result, $\hat{A}:=\{\hat{a}:a\in A\}$ can be regarded as a subalgebra of 
$C(\Sp{\rho_X}{A})$.) \ Moreover, $X$ is dense in 
$\Sp{\rho_X}{A}$. If not, take $\alpha\in\Sp{\rho_X}{A}\setminus\cl{}{X}$. By Urysohn's lemma,
there is $g\in\Cnt{}{\Sp{\rho_X}{A}}$ with $g(\alpha)=1$ and $g\restriction_{\cl{}{X}}=0$ (note that here we are using the map $x\mapsto e_x$, so this precisely means that $g(e_x)=0\,\,\textrm{ for all } x\in\cl{}{X}$). Since $\hat{A}$ contains constant functions and separates points of $\X{A}$, and in particular separates the points of $\Sp{\rho_X}{A}$, we have that the subalgebra $\hat{A}\restriction_{\Sp{\rho_X}{A}}:=\{\hat{a}\restriction_{\Sp{\rho_X}{A}}:a\in A\}$ of $\Cnt{b}{\Sp{\rho_X}{A}}$ satisfies the requirements of the Stone-Weierstrass Theorem. Hence, for any $\epsilon>0$, 
there exists $a_{\epsilon}\in A$ such that $\rho_X(g-\hat{a}_{\epsilon})=\sup_{x\in X}|g(e_x)-\hat{a}_{\epsilon}(e_x)|<\epsilon$.  
Choosing in particular
$\epsilon>0$ such that $\frac{1-\epsilon}{\epsilon}>1$, we get
$$
|g(\alpha)-\hat{a}_{\epsilon}(\alpha)|=|1-\alpha(a_{\epsilon})|<\epsilon,
$$
or equivalently,
$$
1-\epsilon<\alpha(a_{\epsilon})<1+\epsilon.
$$
Also, for every $x\in X$, $|g(e_x)-\hat{a}_{\epsilon}(e_x)|=|0-a_{\epsilon}(x)|<\epsilon$. Thus
\[
	1-\epsilon\leq\sup_{\alpha\in\Sp{\rho_X}{A}}|\hat{a}_{\epsilon}(\alpha)|\leq
	\rho_X(a_{\epsilon})\leq\epsilon<1-\epsilon,
\]
which is a contradiction. Therefore, $\cl{}{X}=\Sp{\rho_X}{A}$.
Conversely, every Hausdorff compactification of $X$ can be realized as the Gelfand spectrum of a suitable 
subalgebra of $\Cnt{b}{X}$ (for more details see \cite[Chapter I, Theorem 8.1]{TWG}). 
\end{rem}
\begin{dfn}\label{norm-cone}
Let $(A,\rho)$ be a seminormed algebra and $C$ be a cone of $A$. For every $a\in A$, we define 
\[
	\left\|a\right\|_{C;\rho}:=\inf_{p\in C}\rho(a+p).
\]
\end{dfn}
\begin{prop}\label{submul} \ $\left\|{}\right\|_{C;\rho}$ is a sublinear function on $A$, i.e., 
\begin{enumerate}
	\item[1.]{\label{I}
$\textrm{for all } a,b \in A,	\; \left\|{a+b}\right\|_{C;\rho}\leq\left\|{a}\right\|_{C;\rho}+\left\|{b}\right\|_{C;\rho} ,$}
and 
	\item[2.]{\label{II}
	$\textrm{for all } \lambda \ge 0, a \in A, \; \left\|{\lambda a}\right\|_{C;\rho}=\lambda\left\|{a}\right\|_{C;\rho} .$
	}
\end{enumerate}
Moreover, for all $a\in A$, $\left\|{a}\right\|_{C;\rho}\leq\rho(a)$.
\end{prop}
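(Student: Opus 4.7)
The plan is to establish the three claims (subadditivity, positive homogeneity, and the bound by $\rho$) directly from the definition of $\|\cdot\|_{C;\rho}$ as an infimum, using only the fact that $C$ is closed under addition and under multiplication by nonnegative reals (so in particular $0 \in C$), together with the subadditivity and positive homogeneity of $\rho$.

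First I would verify subadditivity by an $\varepsilon/2$-argument. Given $a,b \in A$ and $\varepsilon > 0$, choose $p,q \in C$ with $\rho(a+p) < \|a\|_{C;\rho} + \varepsilon/2$ and $\rho(b+q) < \|b\|_{C;\rho} + \varepsilon/2$. Since $C$ is closed under sums, $p+q \in C$, and by the subadditivity of $\rho$,
\[
\|a+b\|_{C;\rho} \le \rho\bigl((a+b)+(p+q)\bigr) \le \rho(a+p)+\rho(b+q) < \|a\|_{C;\rho}+\|b\|_{C;\rho}+\varepsilon.
\]
Letting $\varepsilon \to 0$ gives the first inequality.

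Next, for positive homogeneity, the case $\lambda = 0$ reduces to showing $\|0\|_{C;\rho} = 0$, which follows because $0 \in C$ and $\rho(0)=0$, forcing the infimum to equal $0$. For $\lambda > 0$, I would use that the map $p \mapsto \lambda p$ is a bijection of $C$ onto itself (since $C$ is a cone), so
\[
\|\lambda a\|_{C;\rho} = \inf_{p \in C}\rho(\lambda a+p) = \inf_{q \in C}\rho(\lambda a+\lambda q) = \lambda\inf_{q \in C}\rho(a+q) = \lambda\|a\|_{C;\rho},
\]
where the third equality uses the positive homogeneity of $\rho$.

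Finally, the bound $\|a\|_{C;\rho} \le \rho(a)$ is immediate by taking $p=0 \in C$ in the infimum. There is no real obstacle; the only thing to be careful about is to note that a cone in this paper is defined to contain $0$ (since it is closed under multiplication by nonnegative reals), which both legitimizes the choice $p=0$ and handles the $\lambda = 0$ case of positive homogeneity.
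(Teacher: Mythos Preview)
Your proof is correct and follows essentially the same approach as the paper's: the paper uses approximating nets in $C$ where you use an $\varepsilon/2$ argument for subadditivity, and the same bijection $p\mapsto p/\lambda$ on $C$ for positive homogeneity; for the final bound the paper writes $\|a\|_{C;\rho}\le\inf_{c\in C}(\rho(a)+\rho(c))=\rho(a)$, which amounts to your choice $p=0\in C$.
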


\begin{proof}\ \\
Let $a,b\in A$. By Definition \ref{norm-cone}, there are nets $(p_{t})_{t>0}$ and 
$(q_{t})_{t>0}$ in $C$ such that $\lim\limits_{t\rightarrow0}\rho(a+p_{t})=\left\|{a}\right\|_{C;\rho}$ 
and $\lim\limits_{t\rightarrow0}\rho(b+q_{t})=\left\|{b}\right\|_{C;\rho}$, decreasingly.
Then we get
\[
\begin{array}{lcl}
	\left\|{a+b}\right\|_{C;\rho} & = & \inf_{c\in C}\rho(a+b+c)\\
		& \leq & \lim \inf \limits_{t\rightarrow0}\rho(a+b+p_{t}+q_{t})\\ 
		& \leq & \lim \inf \limits_{\epsilon\rightarrow0}\left(\rho(a+p_{t})+\rho(b+q_{t})\right)\\
		& = & \lim\limits_{t\rightarrow0}\rho(a+p_{t}) + \lim\limits_{t\rightarrow0}
		\rho(b+q_{t})\\
		& = & \left\|{a}\right\|_{C;\rho} + \left\|{b}\right\|_{C;\rho}.
\end{array}
\]
This establishes property \emph{1}. As for property \emph{2}, let $a\in A$ and $\lambda\ge0$. If $\lambda=0$ then \eqref{II} clearly holds as $\left\|{0}\right\|_{C;\rho}=0$, where $0$ denotes the additive identity in $A$. If $\lambda>0$, then
\[
	\left\|{\lambda a}\right\|_{C;\rho} = \inf_{c\in C}\rho(\lambda a+c)
	 =  \inf_{c\in C}\rho\left(\lambda \left(a+\frac{c}{\lambda}\right)\right)
		 =  |\lambda|\inf_{c\in C}\rho \left(a+\frac{c}{\lambda}\right)
		 = \lambda \left\|{a}\right\|_{C;\rho}.
\]
Moreover, $\left\|{a}\right\|_{C;\rho}\leq\inf_{c\in C}\left(\rho( a)+\rho(c)\right)=\rho(a)+\inf_{c\in C}\rho(c)=\rho(a)$.
\end{proof}

\begin{prop}\label{nrm-psd}
Let $A$ be a commutative $\mathbb{R}$--algebra, $K$ a nonempty compact subset of $\X{A}$, and $\rho_K$ be as in (\ref{sup-norm-K}) and (\ref{eq22}). For $a \in A$, let $\hat{a}_+(\alpha):=\max\{0,\hat{a}(\alpha)\}$. Then 
$$
\norm{\ringsop{A}{2};\rho_K}{a}=\rho_K(\hat{a}_+)=\norm{\Psd{A}{K};\rho_K}{a}. 
$$
\end{prop}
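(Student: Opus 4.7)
The plan is to establish the equality by first chaining two easy inequalities and then working to match them with the hardest direction, which will require a polynomial approximation argument.

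\textbf{Easy inequalities.} Observe that $\sum A^2 \subseteq \Psd{A}{K}$, since every $\hat{a_i}^2$ is nonnegative on $K$; hence $\|a\|_{\Psd{A}{K};\rho_K} \leq \|a\|_{\sum A^2;\rho_K}$. Next, for every $p \in \Psd{A}{K}$ and every $\alpha \in K$, $\hat{p}(\alpha) \geq 0$, so a pointwise case check (depending on the sign of $\hat{a}(\alpha)$) yields $|\hat{a}(\alpha)+\hat{p}(\alpha)| \geq \hat{a}_+(\alpha)$. Taking $\sup_{\alpha \in K}$ and then $\inf$ over $p \in \Psd{A}{K}$ gives $\rho_K(\hat{a}_+) \leq \|a\|_{\Psd{A}{K};\rho_K}$.

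\textbf{Main inequality.} It remains to show $\|a\|_{\sum A^2;\rho_K} \leq \rho_K(\hat{a}_+)$. Set $M := \rho_K(\hat{a}_+)$ and $R := \rho_K(a)$ (finite since $K$ is compact). The idea is to construct, for each $\varepsilon > 0$, an element $p \in \sum A^2$ such that $\hat{p}$ approximates the (a priori non-algebraic) function $\hat{a}_-(\alpha) := \max\{0,-\hat{a}(\alpha)\}$ uniformly on $K$; since $\hat{a} + \hat{a}_- = \hat{a}_+$ pointwise, this will force $\rho_K(a+p)$ arbitrarily close to $M$.

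\textbf{Weierstrass approximation step.} The function $g(t) := \sqrt{\max\{0,-t\}}$ is continuous on $[-R,R]$, so by the Weierstrass approximation theorem there is a real polynomial $q$ with $|q(t) - g(t)| < \eta$ on $[-R,R]$, for a small $\eta > 0$ to be chosen. Setting $p := q(a)^2 \in \sum A^2$, we have $\hat{p}(\alpha) = q(\hat{a}(\alpha))^2$. Since $g(t)^2 = \max\{0,-t\}$, the factorization $q(t)^2 - g(t)^2 = (q(t)-g(t))(q(t)+g(t))$ and the bound $|q(t) + g(t)| \leq \eta + 2\sqrt{R}$ yield $|q(\hat{a}(\alpha))^2 - \hat{a}_-(\alpha)| \leq \eta(\eta + 2\sqrt{R})$ uniformly on $K$. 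Choosing $\eta$ so that this is $<\varepsilon$, we obtain
\[
|\hat{a}(\alpha) + \hat{p}(\alpha)| \leq |\hat{a}(\alpha) + \hat{a}_-(\alpha)| + \varepsilon = \hat{a}_+(\alpha) + \varepsilon \leq M + \varepsilon,
\]
for every $\alpha \in K$. Hence $\|a\|_{\sum A^2;\rho_K} \leq \rho_K(a+p) \leq M + \varepsilon$, and letting $\varepsilon \to 0$ completes the chain of inequalities.

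\textbf{Where the difficulty lies.} The first two inequalities are essentially formal; the real content is the reverse inequality, where one must actually produce algebraic elements (sums of squares in $A$) whose Gelfand transforms mimic the analytically-defined negative part $\hat{a}_-$. The key technical maneuver is replacing the naive choice $p = \hat{a}_-$ (which need not lie in $\hat{A}$) by a square $q(a)^2$ of a polynomial in $a$, using that compactness of $K$ bounds $\hat{a}$ into a compact interval where Weierstrass applies to $\sqrt{(\cdot)_-}$.
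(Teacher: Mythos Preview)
Your proof is correct and follows the same high-level strategy as the paper: approximate $\hat{a}_-$ by the square of the Gelfand transform of some element of $A$, then use that element squared as the witness in $\sum A^2$. The chain of inequalities $\rho_K(\hat{a}_+)\leq\|a\|_{\Psd{A}{K};\rho_K}\leq\|a\|_{\sum A^2;\rho_K}\leq\rho_K(\hat{a}_+)$ is handled cleanly.

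There is, however, a genuine technical difference worth recording. The paper approximates $\sqrt{\hat{a}_-}\in C(K)$ by some $\hat{b}$ with $b\in A$, invoking the Stone--Weierstrass theorem for the subalgebra $\hat{A}\restriction_K\subseteq C(K)$; this requires knowing that $\hat{A}$ separates points of $K$ and contains the constants. You instead apply the one-variable Weierstrass theorem to $g(t)=\sqrt{\max\{0,-t\}}$ on the interval $[-R,R]$ and then substitute $t\mapsto a$, so your approximant $q(a)^2$ lies in the subalgebra $\mathbb{R}[a]\subseteq A$ generated by $a$ alone. This is strictly more elementary: it bypasses Stone--Weierstrass entirely and does not depend on any separation property of $\hat{A}$ on $K$. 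The paper's route, on the other hand, fits naturally with its later use of Stone--Weierstrass (Remark~\ref{compactification}) and makes the density statement $\overline{\sum A^2}^{\rho_K}=\Psd{A}{K}$ immediate, which it uses to collapse the two seminorms; you get that equality instead as a by-product of the closed chain of inequalities.
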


\begin{proof}
Let $\hat{a}_-(\alpha)=\max\{0,-\hat{a}(\alpha)\}$. Then $\hat{a}=\hat{a}_+-\hat{a}_-$ and both functions 
$\hat{a}_+$ and $\hat{a}_-$ are nonnegative and continuous on the compact set $K$. Since $\hat{A}:=\{\hat{a}:a\in A\}$ contains constant functions and separates points of $\X{A}$, and in particular separates the points of $K$, we have that the subalgebra $\hat{A}\restriction_K:=\{\hat{a}\restriction_K:a\in A\}$ of $\Cnt{b}{K}$ satisfies the requirements of the Stone-Weierstrass Theorem. Hence, for every $\epsilon>0$, there exists $b\in A$ such that 
$\rho_K(\hat{b}-\sqrt{\hat{a}_-})<\delta$, where $\delta>0$ is chosen in a way that $|r-s|<\delta$ implies 
$|r^2-s^2|<\epsilon$ for $r,s$ in a suitable compact set containing $\hat{a}_-(K)$. The existence of such $\delta$ 
is guaranteed by uniform continuity of the square function on compact subsets of $\reals$. 
Then $b^2\in\ringsop{A}{2}\subseteq\Psd{A}{K}$ and we have
\[
	\rho_K(a+b^2)=\rho_K(\hat{a}_+-\hat{a}_-+\hat{b}^2)<\rho_K(\hat{a}_+)+\epsilon.
\]
Thus, $\norm{\ringsop{A;\rho_K}{2}}{a}\leq\rho_K(\hat{a}_+)$.

To see the reverse inequality, note that $\rho_K(a)=\max\{\rho_K(\hat{a}_+),\rho_K(\hat{a}_-)\}$ and for each 
$p\in\ringsop{A}{2}$, clearly $\rho_K(\hat{a}_+)\leq\rho_K(a+p)$, therefore, 
$\rho_K(\hat{a}_+)\leq\norm{\ringsop{A}{2};\rho_K}{a}$.
Moreover, $\cl{\rho_K}{\ringsop{A}{2}}=\Psd{A}{K}$, so {$\norm{\ringsop{A}{2};\rho_K}{a}=\norm{\Psd{A}{K};\rho_K}{a}$} for 
every $a\in A$.
\end{proof}

\begin{rem} In particular, for any $\beta \in \X{A}$, $\norm{\ringsop{A}{2};\rho_{\{\beta\}}}{a}=\hat{a}_+(\beta)=\max\{0,\beta(a)\}$. 
\end{rem}


\section{\sc{\large{General truncated moment problems}}}\label{GTMP} 
In this section we study the following generalization of the Classical truncated moment problem.

\begin{dfn}\label{B-TKMP}
Let $A$ be a unital commutative $\reals$--algebra. Given a closed subset $K$ of $\X{A}$, a linear subspace $B$ of $A$, and a linear functional $\map{L}{B}{\reals}$, the \emph{$B$--Truncated $K-$Moment Problem} asks whether there exists a positive Radon measure $\nu$
whose support is contained in $K$ such that 
\[
	L(b)=\int\hat{b}~d\nu\quad\textrm{ for all } b\in B.
\]
When this representation exists, $\nu$ is called a $K$--representing measure for~$L$.
\end{dfn}
The following is one of the main results of this article and provides a necessary and sufficient condition for the existence of a solution $\nu$ to the $B$--truncated $K$--moment problem, in the presence of a submultiplicative seminorm on $A$. 

\begin{thm}\label{posext}
Let $(A,\rho)$ be a seminormed algebra, $B$ a linear subspace of $A$, $d \in \mathbb{N}$, $S$ a $\ringsop{A}{2d}$--module, and $\map{L}{B}{\reals}$ a linear functional. Then $L$ admits an integral representation with respect to a positive Radon measure whose support is contained in 
$\Sp{\rho}{A}\cap\K{S}$ if and only if there exists a $D>0$ such that
\[
	L(b)\leq D\norm{S;\rho}{b}
\]
for all $b\in B$. \end{thm}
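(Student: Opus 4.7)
My plan is to handle the two directions separately.

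\textbf{Necessity} is immediate. Assuming $L(b) = \int \hat b\, d\nu$ with $\nu$ supported in $K := \Sp{\rho}{A} \cap \K{S}$, I would observe that $K$ is compact (so $\nu(K) < \infty$), that $\hat p|_K \ge 0$ for each $p \in S$ (by definition of $\K{S}$), and that $|\alpha(\cdot)| \le \rho(\cdot)$ on $\Sp{\rho}{A}$. Combining these gives
\[
L(b) \le \int_K (\hat b + \hat p)\, d\nu \le \nu(K)\, \rho(b+p) \quad \textrm{for all } p \in S,
\]
and taking the infimum over $p \in S$ delivers the required inequality with $D = \nu(K)$.

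\textbf{Sufficiency} is the substantive direction, and I would proceed in two phases. In the first phase, Proposition \ref{submul} tells me that $\|\cdot\|_{S;\rho}$ is a sublinear functional on $A$, so Hahn-Banach extends $L$ to a linear $\tilde L: A \to \reals$ with $\tilde L(a) \le D\, \|a\|_{S;\rho}$ for every $a \in A$. Two immediate consequences follow: $\tilde L$ is $\rho$-continuous, since $\|\cdot\|_{S;\rho} \le \rho$ by the last assertion of Proposition \ref{submul} and the bound applies to both $a$ and $-a$; and $\tilde L$ is nonnegative on $S$, because for $s \in S$ the choice $p = s$ in the infimum defining $\|-s\|_{S;\rho}$ gives $\|-s\|_{S;\rho} = 0$, forcing $\tilde L(-s) \le 0$.

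In the second phase, I would appeal to a Riesz-Haviland-type representation theorem for $\rho$-continuous linear functionals on a unital commutative seminormed $\reals$-algebra that are nonnegative on a $\ringsop{A}{2d}$-module---results of this form are available in the work of Marshall and of Ghasemi-Kuhlmann-Marshall, among others. The upshot is that $\tilde L$ admits a representing positive Radon measure $\nu$ supported in $\Sp{\rho}{A} \cap \K{\cl{\rho}{S}}$; since every $\alpha \in \Sp{\rho}{A}$ is $\rho$-continuous, $\K{\cl{\rho}{S}} \cap \Sp{\rho}{A} = \K{S} \cap \Sp{\rho}{A}$, yielding the desired support. Restricting $\tilde L$ back to $B$ then recovers $L$.

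\textbf{The hard part} is this integral representation step. The heuristic template is to apply the Gelfand transform $a \mapsto \hat a|_K$ on $K := \Sp{\rho}{A} \cap \K{S}$, sending $A$ into $\Cnt{}{K}$, and then combine Stone-Weierstrass density on the compact set $K$ with Riesz representation. The delicate point is to upgrade positivity of $\tilde L$ on $S$ (together with $\rho$-continuity) to positivity on all of $\Psd{A}{K}$---equivalently, to show that $\cl{\rho}{S}$ captures $\Psd{A}{K}$ in the sense needed for the representation. This is a Positivstellensatz-type statement for seminormed algebras, and it is precisely where the $\ringsop{A}{2d}$-module structure of $S$ becomes essential.
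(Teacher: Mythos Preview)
Your proposal is correct and follows essentially the same approach as the paper: Hahn--Banach against the sublinear functional $D\|\cdot\|_{S;\rho}$, then deduce $\rho$-continuity and $S$-positivity of the extension, then invoke the Ghasemi--Kuhlmann--Marshall representation (the paper cites \cite[Corollary~3.8]{GKM} precisely at the step you flag as the hard part). Your necessity argument is in fact slightly more direct than the paper's---you bound the integral immediately via $|\alpha(\cdot)|\le\rho(\cdot)$ on $\Sp{\rho}{A}$ and take $D=\nu(K)$, whereas the paper first passes to the $\rho$-continuous extension $\bar L$ and uses its operator norm $\rho'(\bar L)$---but the two are equivalent in spirit.
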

\begin{proof}
($\Rightarrow$)
Suppose that $L$ admits an integral representation with respect to a positive Radon measure $\mu$ supported in $\Sp{\rho}{A}\cap\K{S}$. Then, in particular, $L$ admits a $\rho$--continuous extension $\bar{L}$ to $A$ which 
is also positive on $S$. Therefore, 
\[
	\textrm{ for all } p\in S,\quad\bar{L}(p)\geq0.
\]
Clearly for every $a\in A$ and $p\in S$, we have $\hat{a}\leq\hat{a}+\hat{p}$ on $\Sp{\rho}{A}\cap\K{S}$, 
and so
\[
\begin{array}{lcl}
	\bar{L}(a) \leq  \bar{L}(a+p)	 \leq  |\bar{L}(a+p)|  \leq  \rho'(\bar{L})\rho(a+p).
\end{array}
\]
(Here, $\rho'(\bar{L})=\sup_{\rho(a)\leq1}|\bar{L}(a)|$, which is finite by 
$\rho$--continuity). Thus,
\[
	\bar{L}(a)\leq\rho'(\bar{L}) \; \inf_{p\in S}\rho(a+p)=\rho'(\bar{L}) \; \norm{S;\rho}{a} \quad (a \in A),
\]
which proves the necessity part.

($\Leftarrow$) Now assume that there exists $D>0$ such that for every $b\in B$, $L(b)\leq D\norm{S;\rho}{b}$. Since $\norm{S;\rho}{}$ is sublinear, by the Hahn-Banach Theorem \cite[Theorem 3.2]{RFA} $L$ admits an extension $\map{\bar{L}}{A}{\reals}$ such that 
$$
-D\norm{S;\rho}{-a}\leq\bar{L}(a)\leq D\norm{S;\rho}{a},
$$
for all $a\in A$.
By Proposition \ref{submul}, $D\norm{S;\rho}{-a}\leq D\rho(-a)=D\rho(a)$, thus
\[
	-D\rho(a)\leq -D\norm{S;\rho}{-a}\leq\bar{L}(a)\leq D\norm{S;\rho}{a}\leq D\rho(a),\quad\textrm{ for all } a\in A.
\]
The above inequality implies that $\bar{L}$ is $\rho$--continuous. We show that $\bar{L}$ is also nonnegative on $S$.
Let $p\in S$; then $-\bar{L}(p)=\bar{L}(-p)\leq D\norm{S;\rho}{-p}$. By definition,  
$\norm{S;\rho}{-p}=\inf_{q\in S}\rho(q-p)$, and therefore $\norm{S;\rho}{-p}=0$. So $\bar{L}(p)\geq0$ for each $p\in S$.
Applying \cite[Corollary 3.8]{GKM}, we find a positive Radon measure $\mu$ supported in $\Sp{\rho}{A}\cap\K{S}$ such that
\[
	\bar{L}(a)=\int\hat{a}~d\mu,\quad\textrm{ for all } a\in A,
\]
and hence on $B$ itself, as desired.
\end{proof}
The previous result can be extended to the case when $A$ is endowed with a \it locally multiplicatively convex \rm (lmc) topology $\tau$, i.e., the topology on $A$ generated by some family $\mathcal{F}_\tau$ of submultiplicative seminorms on $A$.  
 \ We denote by $\Sp{\tau}{A}$ the Gelfand spectrum of $(A,\tau)$, i.e., the  set of all
$\tau$--continuous $\alpha \in \X A$. Since any  linear functional on $A$ is $\tau$--continuous if and only if it is $\rho$--continuous for some $\rho \in \mathcal{F}_\tau$ (see \cite[Lemma 4.1]{GIKM}), we have that
\begin{equation} \label{eqrho}
\Sp{\tau}{A} =
\bigcup_{\rho \in \mathcal{F}_\tau} \Sp{\rho}{A}.
\end{equation} 
Without loss of generality, we can always assume that the family $\mathcal{F}_\tau$ is directed. This also implies that the union in (\ref{eqrho}) is directed by inclusion. Then it is easy to see that the following holds.

\begin{crl}
Let $A$ be a unital commutative $\reals$--algebra endowed with the lmc topology $\tau$ generated by the directed family $\mathcal{F}_\tau$ of submultiplicative seminorms on $A$. Let $B$ be a linear subspace of $A$, $d \in \mathbb{N}$, $S$ a $\ringsop{A}{2d}$--module, and $\map{L}{B}{\reals}$ a linear functional. Then $L$ admits an integral representation with respect to a positive Radon measure whose support is contained in 
$\Sp{\tau}{A}\cap\K{S}$ if and only if there exist a $D>0$  and a $\rho\in\mathcal{F}_\tau $ such that
\[
	L(b)\leq D\norm{S;\rho}{b},\quad\textrm{ for all } b\in B,
\]
where $\norm{S;\rho}{b}:=\inf_{p\in S}\rho(b+p)$.

\end{crl}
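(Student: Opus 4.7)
The plan is to reduce both implications of the corollary to Theorem \ref{posext} by exploiting the identification (\ref{eqrho}) of $\Sp{\tau}{A}$ as a directed union of the Gelfand spectra $\Sp{\rho}{A}$, combined with the standard fact (Lemma 4.1 of \cite{GIKM}) that a linear functional on $A$ is $\tau$--continuous exactly when it is $\rho$--continuous for some single $\rho\in\mathcal{F}_\tau$.

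For the sufficiency direction, suppose $D>0$ and $\rho\in\mathcal{F}_\tau$ are given with $L(b)\le D\|b\|_{S;\rho}$ for all $b\in B$. I would directly invoke Theorem \ref{posext} applied to the seminormed algebra $(A,\rho)$ to obtain a positive Radon measure $\mu$ representing $L$ on $B$ whose support is contained in $\Sp{\rho}{A}\cap\K{S}$; by (\ref{eqrho}) this support lies a fortiori in $\Sp{\tau}{A}\cap\K{S}$, which is exactly what is required.

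For the necessity direction, assume a positive Radon measure $\mu$ supported in $\Sp{\tau}{A}\cap\K{S}$ represents $L$ on $B$. Following the template of the proof of Theorem \ref{posext}, I would first extend $L$ to $\bar{L}(a):=\int\hat{a}\,d\mu$ on $A$, observe that $\bar{L}$ is linear, positive on $S$ (since $\hat{p}\ge 0$ on $\K{S}$ for $p\in S$), and $\tau$--continuous. Lemma 4.1 of \cite{GIKM} then yields a single $\rho\in\mathcal{F}_\tau$ and a constant $C>0$ with $|\bar{L}(a)|\le C\rho(a)$ for all $a\in A$. Repeating the chain of inequalities in the proof of Theorem \ref{posext}, for every $b\in B$ and every $p\in S$,
\[
L(b) \;=\; \bar{L}(b) \;\le\; \bar{L}(b+p) \;\le\; |\bar{L}(b+p)| \;\le\; C\,\rho(b+p),
\]
and taking the infimum over $p\in S$ delivers $L(b)\le C\|b\|_{S;\rho}$, proving the inequality with $D:=C$.

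The only nontrivial step is verifying that $\bar{L}$ is $\tau$--continuous; unlike the seminormed case where compactness of $\Sp{\rho}{A}$ forced the uniform bound $|\hat{a}(\alpha)|\le\rho(a)$ on $\supp\mu$, here the support of $\mu$ can in principle be non-compact, and one must exploit the directedness of $\mathcal{F}_\tau$ through Lemma 4.1 of \cite{GIKM} to compensate and produce a \emph{single} dominating seminorm. Once that is in hand, the rest of the argument is a verbatim transcription of the scheme already used in Theorem \ref{posext}.
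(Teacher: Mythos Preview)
Your sufficiency argument is correct and is exactly the intended reduction: once $L(b)\le D\|b\|_{S;\rho}$ holds for a single $\rho\in\mathcal F_\tau$, Theorem~\ref{posext} yields a representing measure supported in $\Sp{\rho}{A}\cap\K{S}\subseteq\Sp{\tau}{A}\cap\K{S}$ by~(\ref{eqrho}).

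The necessity direction has a genuine gap which you yourself flag but do not close. You set $\bar L(a):=\int\hat a\,d\mu$ and claim it is $\tau$--continuous, after which Lemma~4.1 of \cite{GIKM} would indeed supply a single dominating $\rho$. But that lemma takes $\tau$--continuity as \emph{input}; it does not establish it, and directedness of $\mathcal F_\tau$ alone does not help here. In fact $\bar L$ need be neither well-defined nor $\tau$--continuous. Take $A=\reals[X]$, $\mathcal F_\tau=\{\rho_n\}_{n\ge1}$ with $\rho_n(q):=\sup_{|x|\le n}|q(x)|$, $S=\sum A^2$, $B=A$, and let $\mu$ be the standard Gaussian on $\reals=\Sp{\tau}{A}\cap\K{S}$. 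Then $L(q):=\int q\,d\mu$ is represented by a positive Radon measure with the required support, yet $\|X^{2k}\|_{S;\rho_n}=n^{2k}$ while $L(X^{2k})=(2k-1)!!$, and no fixed pair $(D,n)$ can satisfy $(2k-1)!!\le D\,n^{2k}$ for all $k$. So the step you single out as ``the only nontrivial step'' is not merely nontrivial but in general false: the passage from a representing measure supported in the non-compact set $\Sp{\tau}{A}$ to a bound involving a single $\rho$ requires an additional hypothesis (for instance, that $\supp\mu\subseteq\Sp{\rho}{A}$ for some $\rho$, which in Theorem~\ref{posext} came for free from compactness of $\Sp{\rho}{A}$). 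The paper offers no argument beyond ``it is easy to see''; your instinct that this is where the difficulty lies is correct, but the scheme you outline cannot be completed as stated.
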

We utilize the criterion in Theorem \ref{posext} to study the existence of representing measures in the absence of a fixed seminorm. We begin by giving a slight generalization of a result due to Choquet \cite[Theorem~34.2]{chq}. For the reader's convenience, we include the proof.

\begin{lemma}\label{chq}
(Choquet's Lemma) \ Let $W$ be a linear subspace of an $\reals$--vector space $V$, let $C\subseteq V$ be a convex cone and let $W_C:=(W+C)\cap(W-C)$. 
Let $\map{\ell}{W}{\reals}$ be a linear functional with $L(W\cap C)\ge0$. 
Then $\ell$ admits an extension $\bar{\ell}$ to $W_C$ such that $\bar{\ell}(W_C\cap C)\ge0$.
\end{lemma}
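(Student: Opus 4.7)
The plan is to construct a sublinear functional $p$ on $W_C$ that agrees with $L$ on $W$, apply the Hahn--Banach Theorem to extend $L$ to $\bar{L}\colon W_C\to\reals$ with $\bar{L}\le p$, and then deduce nonnegativity of $\bar{L}$ on $W_C\cap C$ directly from the formula for $p$.

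First I would verify that $W_C$ is a linear subspace of $V$ containing $W$: since $C$ is a convex cone, $C+C\subseteq C$ and $\lambda C\subseteq C$ for every $\lambda\ge 0$, and together with $W=-W$ this gives $-(W+C)=W-C$, so both $W+C$ and $W-C$ (hence their intersection $W_C$) are closed under addition and under multiplication by any real scalar. Next I would define
\[
	p(v):=\inf\{\,L(w):w\in W,\ w-v\in C\,\}\qquad(v\in W_C).
\]
Since $v\in W-C$ the defining set is nonempty. Moreover, writing $v=w_0+c_0$ with $w_0\in W$ and $c_0\in C$, any admissible $w$ satisfies $w-w_0=(w-v)+c_0\in C+C\subseteq C$, and since $w-w_0\in W$ as well, we get $w-w_0\in W\cap C$, hence $L(w)\ge L(w_0)$. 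This shows $p(v)\ge L(w_0)>-\infty$. Positive homogeneity and subadditivity of $p$ then follow routinely from the cone structure of $C$.

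The crucial compatibility is that $p(w)=L(w)$ for every $w\in W$: taking $w$ itself as a competitor yields $p(w)\le L(w)$, and for any other competitor $w'$ one has $w'-w\in W\cap C$, so $L(w')\ge L(w)$. By the Hahn--Banach Theorem, $L$ therefore extends to a linear functional $\bar{L}\colon W_C\to\reals$ satisfying $\bar{L}\le p$ on $W_C$. Finally, for any $v\in W_C\cap C$ the element $w=0$ is a legitimate competitor in the infimum defining $p(-v)$, since $0-(-v)=v\in C$ and $0\in W$; this yields $p(-v)\le L(0)=0$, whence $\bar{L}(v)=-\bar{L}(-v)\ge -p(-v)\ge 0$, as desired.

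The only delicate point I anticipate is verifying that $p$ is finite-valued on $W_C$, i.e., that the defining infimum is bounded below; this is precisely the step where the condition $v\in W+C$ (rather than merely $v\in W-C$) is used in an essential way. The remaining verifications amount to a routine application of Hahn--Banach.
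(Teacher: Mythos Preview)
Your proof is correct and follows essentially the same route as the paper's: both construct a sublinear functional on $W_C$ and invoke Hahn--Banach. The only cosmetic difference is a sign convention---the paper dominates $-L$ by $p(v):=-\sup\{L(w):w\in W,\ v-w\in C\}$ and extends $-L$, whereas you dominate $L$ directly by the dual quantity $p(v):=\inf\{L(w):w\in W,\ w-v\in C\}$; the two functionals are related by $p_{\text{paper}}(v)=p_{\text{student}}(-v)$, so the arguments are interchangeable.
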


\begin{proof}
It is clear that $W_C$ is a linear subspace of $V$ containing $W$. We show that the function 
$p(v):=-\sup\{\ell(w):w\in W \textrm{ and } v-w\in C\}$, $v\in W_C$, is a sublinear function such that $p|_W=-\ell$. 
To see this, note that there are $w,w'\in W$ and $c,c'\in C$ such that $v=w+c=w'-c'$. Thus, $w'-w=c+c'\in C\cap W$ 
and hence $\ell(w'-w)\ge0$, or equivalently $\ell(w)\leq L(w')$. Therefore, the set $\{\ell(w):w\in W\wedge v-w\in C\}$ is nonempty and 
bounded above. Hence, $p(v)$ exists. Clearly, $p(\lambda v)=\lambda p(v)$, so it remains to show that 
$p(v+v')\leq p(v)+p(v')$. If $v-w\in C$ and $v'-w'\in C$, then $(v+v')-(w+w')\in C+C=C$. 
Thus, $-p(v)-p(v')\leq-p(v+v')$ or equivalently $p(v+v')\leq p(v)+p(v')$. For every $v\in W$, $0=v-v\in C$, 
therefore $p(v)\leq-\ell(v)$. Also for every $w\in W$ with $v-w\in C$, we have $\ell(w)\leq \ell(v)$, because $\ell(W\cap C)\ge0$.
Therefore, $-\ell(v)\leq p(v)$, which proves $p|_W=-\ell$.

Applying the Hahn-Banach theorem, $-\ell$ admits an extension $-\bar{\ell}$ to $W_C$ such that
$-\bar{\ell}(v)\leq p(v)$ on $W_C$. For $c\in C\cap W_C$, $p(c)\leq0$ and hence $0\leq-p(c)\leq\bar{\ell}(c)$, as desired. 
\end{proof}

\begin{crl}
(Riesz-Krein Extension Theorem; cf. \cite[Theorem 3.6]{KuLeSp11}) \ Let $V$ be a vector space of real-valued functions on a set $X$ and let $V_0$ be a linear subspace that dominates $V$, i.e., for every $v\in V$ there exist $v_1, v_2 \in V_0$ such that $v_1 \leq v \leq v_2$. Then any positive linear functional on $V_0$ has at least one positive linear extension to all of $V$.
\end{crl}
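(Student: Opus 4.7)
The plan is to realize this corollary as a direct application of Choquet's Lemma, with the ambient vector space taken to be $V$ itself, the subspace $W := V_0$, and the cone $C := \{v \in V : v(x) \geq 0 \text{ for all } x \in X\}$ of pointwise nonnegative functions in $V$. Under these choices, $C$ is a convex cone, and the hypothesis that the given linear functional $L:V_0 \to \reals$ is positive translates precisely to $L(W \cap C) \geq 0$, so Choquet's Lemma is directly applicable once we identify the space $W_C$.

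The key step is then to show that the domination assumption forces $W_C = V$. Given any $v \in V$, the hypothesis supplies $v_1, v_2 \in V_0$ with $v_1 \leq v \leq v_2$ pointwise on $X$. Writing $v = v_1 + (v - v_1)$ exhibits $v$ as an element of $W + C$, since $v - v_1 \geq 0$; similarly $v = v_2 - (v_2 - v)$ places $v$ in $W - C$. Hence $v \in (W + C) \cap (W - C) = W_C$, and since the reverse inclusion $W_C \subseteq V$ is automatic, we obtain $W_C = V$.

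Applying Lemma \ref{chq} now yields a linear extension $\bar{L} : V \to \reals$ of $L$ satisfying $\bar{L}(V \cap C) \geq 0$. Because $V \cap C = C$ is exactly the cone of nonnegative functions in $V$, the extension $\bar{L}$ is positive, which is the desired conclusion. There is no real obstacle here beyond unpacking the domination hypothesis in the precise form needed to identify $W_C$ with $V$; the rest is a direct invocation of the lemma.
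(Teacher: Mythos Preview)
Your proof is correct and follows essentially the same approach as the paper: both take $C=\Psd{V}{X}$, use the domination hypothesis to write $v=v_1+(v-v_1)\in V_0+C$ and $v=v_2-(v_2-v)\in V_0-C$, conclude that $W_C=V$, and then invoke Choquet's Lemma.
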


\begin{proof}
Let $C=\Psd{V}{X}$. Then, for every $v\in V$ and $v_1, v_2\in V_0$ as above, we get
$v-v_1, v_2-v\in C$. Thus, $\pm v\in (V_0+C)\cap(V_0-C)$. Now apply Lemma \ref{chq} to obtain the desired positive extension.
\end{proof}

\begin{rem}
While the Riesz-Krein Extension Theorem can be derived from Choquet's Lemma, we do not know whether the converse is true. 
\end{rem} 

We are now ready to state and prove the two main results of this section.

\begin{thm}\label{cmpttrnctmmnt} \ Let $A$ be a unital commutative $\reals$--algebra. Suppose $K\subseteq\X{A}$ is compact and $B\subseteq A$ is a linear subspace such that there exists $q\in B$ with $\hat{q}$ strictly 
positive on $K$. Then every $K-$positive linear functional $\map{L}{B}{\reals}$ admits 
an integral representation via a positive Radon measure supported in $K$.
\end{thm}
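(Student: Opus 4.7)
My plan is to reduce Theorem \ref{cmpttrnctmmnt} to Theorem \ref{posext} by equipping $A$ with the sup-norm submultiplicative seminorm $\rho_K$ from \eqref{sup-norm-K}--\eqref{eq22}, which is available precisely because $K$ is compact. The first step is to convert strict positivity of $\hat q$ into a uniform lower bound: since $\hat q$ is continuous on the compact set $K$, $c := \min_{\alpha \in K}\hat q(\alpha) > 0$. For any $b \in B$, setting $M := \rho_K(\hat{b}_+)/c$ gives $M \hat q \geq Mc \geq \hat b$ pointwise on $K$, so $Mq - b \in \Psd{B}{K}$; $K$-positivity of $L$ then yields
\[
    L(b) \;\leq\; M\, L(q) \;=\; D\, \rho_K(\hat{b}_+), \qquad D := L(q)/c.
\]
(If $L(q) = 0$ then applying the same bound to $\pm b$ forces $L \equiv 0$, represented by the zero measure; so henceforth $D > 0$.)

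By Proposition \ref{nrm-psd}, $\rho_K(\hat{b}_+) = \norm{\Psd{A}{K}; \rho_K}{b}$. Since $\Psd{A}{K}$ is a cone containing $1$ with $a^{2d}\cdot \Psd{A}{K} \subseteq \Psd{A}{K}$ for every $a \in A$ and every $d \in \naturals$ (as $\hat a^{2d} \geq 0$ on $K$), it qualifies as a $\sum A^{2d}$-module. Theorem \ref{posext}, applied with $\rho = \rho_K$ and $S = \Psd{A}{K}$, then produces a positive Radon measure $\mu$ representing $L$ on $B$, supported in $\Sp{\rho_K}{A} \cap \K{\Psd{A}{K}}$.

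The main remaining step --- and the technical heart of the argument --- is to show that this support actually lies in $K$, i.e., $\Sp{\rho_K}{A} \cap \K{\Psd{A}{K}} \subseteq K$. Fix $\alpha$ in the intersection and assume for contradiction that $\alpha \notin K$. Since $K$ is a closed subset of the compact Hausdorff space $\Sp{\rho_K}{A}$, Urysohn's lemma yields a continuous $f : \Sp{\rho_K}{A} \to [0,1]$ with $f(\alpha) = 1$ and $f|_K \equiv 0$. The family $\{\hat a|_{\Sp{\rho_K}{A}} : a \in A\}$ contains constants and separates points of $\Sp{\rho_K}{A}$ (the latter inherited from $\hat A$ separating points of $\X A$), so Stone--Weierstrass supplies an $a \in A$ with $\sup_{\Sp{\rho_K}{A}} |\hat a - f| < 1/4$. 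Then $|\hat a| < 1/4$ on $K$, so $\tfrac{1}{4}\pm a \in \Psd{A}{K}$, and $\alpha \in \K{\Psd{A}{K}}$ forces $|\alpha(a)| \leq 1/4$; but $\hat a(\alpha) > 3/4$, a contradiction. The expected obstacle is precisely this localization: a priori Theorem \ref{posext} only places $\mu$ in the Gelfand spectrum, and it is only by upgrading positivity of the extension from $\sum A^2$ to its $\rho_K$-closure $\Psd{A}{K}$ (Proposition \ref{nrm-psd}) that the Urysohn/Stone--Weierstrass argument can trim the support down to $K$.
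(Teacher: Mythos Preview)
Your argument is correct, but it follows a genuinely different route from the paper's. The paper passes to the ambient algebra $C(K)$: it pushes $L$ forward to $\tilde L$ on $\iota(B)\subset C(K)$, invokes Choquet's Lemma (Lemma~\ref{chq}) to extend $\tilde L$ to $\spn\{f,|f|:f\in\iota(B)\}$ so that $\tilde L(\hat b_+)$ makes sense, then bounds $\tilde L(\hat b)\le \tilde L(\hat b_+)\le \rho_K(\hat b_+)\tilde L(\hat q)$, and finally applies Theorem~\ref{posext} with $A=C(K)$, where $\Sp{\rho_K}{C(K)}=K$ is immediate from Remark~\ref{compactification}. You instead stay inside $(A,\rho_K)$ and bypass Choquet entirely: the single comparison $M q-b\in\Psd{B}{K}$ with $M=\rho_K(\hat b_+)/c$ already gives $L(b)\le D\rho_K(\hat b_+)$ using only $K$--positivity on $B$, with no need to evaluate any extension on $\hat b_+$. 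The trade--off is that you must then localize the support by hand via Urysohn and Stone--Weierstrass. One small remark: your final contradiction does not actually need the constraint $\alpha\in\K{\Psd{A}{K}}$; once $|\hat a|<\tfrac14$ on $K$ you have $\rho_K(a)\le\tfrac14$, and $\alpha\in\Sp{\rho_K}{A}$ alone forces $|\alpha(a)|\le\tfrac14$. So what you have really shown is $\Sp{\rho_K}{A}=K$, which is exactly the compact case of Remark~\ref{compactification}. Either way, your approach is more elementary in that it avoids Lemma~\ref{chq}; the paper's approach is slightly cleaner at the end because the identification of the spectrum with $K$ is absorbed into the known Gelfand theory for $C(K)$.
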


\begin{proof}
Since $\hat{q}$ is strictly positive on $K$ and $K$ is compact, a scalar multiple of $q$, which we again denote by $q$, is such that $\hat{q}\geq 1$ on $K$. By the compactness of $K$, every $b\in B$ induces a bounded continuous 
function $\hat{b}$ on $K$, so we define $\iota: B\to C(K)$ by $\iota(b)=\hat{b}$. Let \[
\begin{array}{cccl}
	\tilde{L}: &\iota(B)&\longrightarrow &\reals\\
	 & {\hat{b}} &\mapsto& {L}(b),
	\end{array}
\]
{then the $K-$positivity of $L$ implies the $K-$positivity of $\tilde{L}$. Therefore, we can apply Lemma \ref{chq} for $\ell=L$, $W=\iota(B)$, $V=C(K)$, $C=\Psd{Z}{K}$ with $Z:=\spn\{f,|f|: f\in\iota(B)\}$, so to obtain a $K$--positive extension $\tilde{\tilde{L}}$ of $\tilde{L}$ to $(\iota(B)-\Psd{Z}{K})\cap (\iota(B)-\Psd{Z}{K})$. Since for any $b\in B$ we have
} 
\[
\begin{array}{lcl}
	\pm|\hat{b}| & = & \rho_K(b)\hat{q} - (\rho_K(b)\hat{q}\mp|\hat{b}|)\\
		& = & -\rho_K(b)\hat{q} + (\rho_K(b)\hat{q}\pm|\hat{b}|)\\
		& \in & (\iota(B) - \Psd{Z}{K}) \cap (\iota(B) + \Psd{Z}{K}),
\end{array}
\]
{we easily get that $(\iota(B)-\Psd{Z}{K})\cap (\iota(B)-\Psd{Z}{K})=Z$.}
Now taking $S=\sum\Cnt{}{K}^2$ or $\Psd{\Cnt{}{K}}{K}$, and a positive $D\ge L(q)$ we have that, for all $b\in B$:
\[
\begin{array}{lcl}
	\tilde{L}(\hat{b}) = \tilde{\tilde{L}}(\hat{b}_+ - \hat{b}_-) \leq \tilde{\tilde{L}}(\hat{b}_+) \leq \tilde{\tilde{L}}(\rho_K(\hat{b}_+)\hat{q}) = \rho_K(\hat{b}_+)\tilde{L}(\hat{q}) \leq  D\norm{S; \rho_K}{\hat{b}}.
\end{array}
\]
Then, by Theorem \ref{posext} applied to $A=\Cnt{}{K}$, $\rho=\rho_K$ and $\tilde{L}: {\iota(B)} \to \reals$, there exists a representing measure $\mu$ for $\tilde{L}$ supported in $\Sp{\rho_K}{\Cnt{}{K}}=K$ (this equality holds since $K$ is compact; cf. {Remark~\ref{compactification}}). Hence, for all $b\in B$ we have
$$L(b)=\tilde{L}(\hat{b})=\int \hat{b} d\mu$$
i.e., $\mu$ is a $K$--representing measure for $L$.
\end{proof}

\begin{thm}\label{trnctmmnt} \ Let $A$ be a unital commutative $\reals$--algebra. Suppose $K\subseteq\X{A}$ is closed and non-compact, $B\subsetneq A$ is a linear subspace, and there exists $p\in A\setminus B$ such that $\hat{p}\ge1$ on $K$, $B_p:=\spn(B\cup\{p\})$ contains~$1$, $B_p$ generates $A$ and the following holds: 
\begin{equation} \label{p-existence2}
\textrm{for all } b \in B, \; \sup_{\alpha \in K} \left|\ddfrac{\hat{b}(\alpha)}{\hat{p}(\alpha)}\right| < \infty.      
\end{equation}

Let $\map{L}{B}{\reals}$ a $K$--positive linear functional, and assume that $L$ has a $K$--positive extension $\bar{L}$ to $B_p$. Then there exists a 
positive Radon measure $\nu$ whose support is contained in $K$ such that 
\begin{equation}
	L(b)=\int\hat{b}~d\nu,\quad\textrm{ for all } b\in B,  \label{intform}
\end{equation}
i.e., $\nu$ is a $K$--representing measure for $L$.
\end{thm}

Before we give the proof of Theorem \ref{trnctmmnt}, we present some related examples and remarks. 

\begin{rem} \label{rem29} 
It is not hard to prove that condition (\ref{p-existence2}) in Theorem \ref{trnctmmnt} admits the following equivalent form:
\begin{equation} \label{p-existence}
	\textrm{ for all } b\in B \; \textrm{there exists } \lambda > 0 \textrm{ such that } \lambda p\pm b\in\Psd{B_p}{K}.
\end{equation}
We will often switch from one form to the other, depending upon the circumstances. 
\end{rem}

In the sequel, we will use $\reals[\underline{X}]_d$ to denote the space of polynomials of degree at most $d$ in $\underline{X} \equiv (X_1,\ldots,X_n)$; when $n=1$, we will write $\reals[X]$ instead of $\reals[\underline{X}]$.

\begin{exm}
In Theorem \ref{trnctmmnt}, the assumption $p \in A \setminus B$ is needed, as the following example shows; that is, assuming only that $p \in A$ might lead to the wrong conclusions. For example, if $B:=\reals[X]_4$, $A:=\mathbb{R}[X]$, $K=\reals \cong \X{A}$, and if $L(a_0 + a_1X + a_2X^2 + a_3X^3 + a_4X^4) := a_0 + a_1 + a_2 + a_3 + 2a_4$, then $p(X):=X^4+1$ satisfies the assumptions of Theorem \ref{trnctmmnt} with the exception of $p \in A \setminus B$, but the functional $L$ which is $\reals$--positive does not admit a representing measure, as shown in \cite[Example 2.1]{Curto-Fialkow-2008}. \qed
\end{exm}

\begin{rem}
The assumption $\hat{p}\ge1$ on $K$ in Theorem \ref{trnctmmnt} is a matter of convenience. Indeed, since $A$ is unital it would suffice to assume that $\hat{p}\ge0$. For, if there exists $p\in A$ such that $\hat{p}\ge0$ and \eqref{p-existence} holds, then $p+1$ always satisfies the requirement of being at least $1$. On the other hand, note that if $\inf\hat{p}=0$ and \eqref{p-existence} holds, then 
\begin{enumerate}
	\item{
	there exists a net $\{x_{\alpha}\}\subseteq K$ such that $\lim\limits_{\alpha}\hat{p}(x_{\alpha})=0$;
	}
	\item{
	for all $b\in B$ there exists $\lambda\ge0$ such that $\lambda\hat{p}\pm\hat{b}\ge0$ on $K$ and hence 
	$\lim\limits_{\alpha}\hat{b}(x_{\alpha})=0$. Since $B_p$ generates $A$, this holds for all elements of $A$. 
	}
\end{enumerate}
Observe now that $\gamma(a):=\lim\limits_{\alpha}\hat{a}(x_{\alpha})$ is an $\reals$--algebra homomorphism which vanishes on $A$, which is impossible since $A$ is unital. Thus, the existence of a $p$ satisfying $\inf\hat{p}=0$ is inconsistent with \eqref{p-existence} when $A$ is unital. 
\end{rem}

We will now show that the requirement that the subspace $B_p$ generates the algebra $A$ cannot be removed from the hypotheses in Theorem \ref{trnctmmnt}. 

\begin{exm} \label{rem38}
Let $A:=\mathbb{R}[X,Y]$, $B:=\mathbb{R}[X]_3 \subseteq \mathbb{R}[X,Y]$, $K:=\mathbb{R} \times \{0\} \subseteq \mathbb{R}^2 \cong \X{A}$ and $p(X,Y):=1+X^4$. Clearly, $p \in A \setminus B$, $\hat{p} \ge 1$ on $K$, $B_p=\mathbb{R}[X]_4$ contains $1$, and for every $b \in B$, given as $b(X,0) \equiv b_0+b_1X+b_2X^2+b_3X^3$ one has 
$$
\left|\frac{b(x,0)}{p(x,0)}\right| \le 4 \max_{i=0,1,2,3}\left|b_i\right|\; \; \; \textrm{for all } (x,0) \in K.
$$
However, the subalgebra of $A$ generated by $B_p$ is $\mathbb{R}[X] \ne A$. Consider now the linear functional $L:B \rightarrow \mathbb{R}$ defined by $L(b):=b(1,0)$. It is straightforward to verify that $L$ is $K$--positive. Now, let $\bar{L}:B_p \rightarrow \mathbb{R}$ be given by $\bar{L}(c_0+c_1X+c_2X^2+c_3X^3+c_4X^4)=c_0+c_1+c_2+c_3+2c_4$. That is, $\bar{L}(c)=c(1,0)+c_4.$ \ Now observe that if $c \in B_p$ satisfies $c \ge 0$ on $K$, then $c_4 \ge 0$. It follows that $\bar{L}$ is a $K$--positive extension of $L$. However, $\bar{L}$ does not have a $K$--representing measure, as shown in \cite[Example 2.1]{Curto-Fialkow-2008}.  
\end{exm}

\begin{proof}[Proof of Theorem \ref{trnctmmnt}]
According to \eqref{p-existence}, for each $c\in B_p$, the function $\frac{\hat{c}}{\hat{p}}$ is bounded and 
continuous on $K$. So, we can consider the linear map:
\[
\begin{array}{cccl}
	\iota: &B_p&\longrightarrow &\Cnt{b}{K}\\
	 & c &\mapsto& \frac{\hat{c}}{\hat{p}}.
\end{array}
\]
By assumption, there exists a linear functional $\bar{L}:B_p\to \reals$ such that $\bar{L}(b)=L(b) \; (\textrm{for all } b\in B)$ and $\bar{L}(\Psd{B_p}{K})\subseteq[0,+\infty)$. Note that $\ker(\iota)\subseteq \ker(\bar{L})$. To see this, let $c\in\ker(\iota)$, which translates to $\iota c|_{K}=0$ and hence $\pm c\in\Psd{B_p}{K}$.
Thus $\bar{L}(\pm c)\ge0$, and hence $\bar{L}(c)=0$. Therefore, the following linear functional is well-defined:
\[
\begin{array}{cccl}
	\tilde{L}: &\iota(B_p)&\longrightarrow &\reals\\
	 & \frac{\hat{c}}{\hat{p}} &\mapsto& \bar{L}(c).
	\end{array}
\]
Note that $\Psd{\iota(B_p)}{K}=\{f\in\iota(B_p) : f\geq 0\ \text{on}\ K\}$. As a result, 
\begin{equation}\label{psd-iotaBp}
\tilde{L}(\Psd{\iota(B_p)}{K})=\tilde{L}(\iota\Psd{B_p}{K})=\bar{L}(\Psd{B_p}{K})\subseteq[0,+\infty).
\end{equation} 

Let us denote by $\B$ the algebra generated by the elements of $\iota(B_p)$ in $\Cnt{b}{K}$. \\
\textbf{Claim 1}. $\mathcal{B}$ separates points of $K$.\newline
\textit{Proof}. Take $x_1\neq x_2\in K$. Note that $B_p$ must separate elements of $K$ since $A$ separates $\X{A}$ (hence $K$) so, if for all $b\in B_p$ we have $b(x_1)=b(x_2)$, then this is also the case for all $a\in A$ (since $B_p$ generates $A$), which is impossible. Two cases arise: if $\hat{p}(x_1)=\hat{p}(x_2)$, then for some $b\in B$, $\hat{b}(x_1)\neq\hat{b}(x_2)$, and $\iota b(x_1)\neq\iota b(x_2)$; if, instead, $\hat{p}(x_1)\neq\hat{p}(x_2)$, then $\iota1(x_1)=\frac{1}{\hat{p}(x_1)}\neq\frac{1}{\hat{p}(x_2)}=\iota1(x_2)$.
 $\qed_{\textrm{Claim 1}}$

\medskip
Therefore, by Remark \ref{compactification}, $\tilde{K}:=\Sp{\rho_K}{\B}$ is a Hausdorff compactification of $K$ and hence 
$\Psd{\B}{K}=\Psd{\B}{\tilde{K}}$. Thus, Theorem \ref{cmpttrnctmmnt} applies to $\tilde{L}: \iota(B_p)\subset\mathcal{B}\to\reals$ and $q=1=\iota(p)$, giving a positive Radon measure $\mu$ supported in $\tilde{K}$ such that
\[
	\tilde{L}\left(\frac{\hat{c}}{\hat{p}}\right)=\int\frac{\hat{c}}{\hat{p}}~d\mu,\quad \textrm{ for all } c\in B_p,
\]
where, with abuse of notation, we have still denoted by $\frac{\hat{c}}{\hat{p}}$ the continuous extension to $\tilde{K}$. The measure $\mu$ is a $\tilde{K}$--representing measure for $\tilde{L}$.  

Consider now the restriction of $\tilde{L}$ to $B$, given as
\[
L(b)=\bar{L}(b)=\tilde{L}\left(\frac{\hat{b}}{\hat{p}}\right)= \int\hat{b} \cdot \tilde{p}~d\mu,\quad \textrm{ for all } b\in B,
\]
{where $\tilde{p}$ denotes the continuous extension of $\frac{1}{\hat{p}}$}
to $\tilde{K}$ (see Remark \ref{compactification}). Now recall that $\hat{p}>0$ on $K$ (which implies $\hat{p} \ge 0$ on $\tilde{K}$), and set $\nu(d x):=\tilde{p}(x)\mu(dx)$. Clearly, $\nu$ is a positive measure on $\tilde{K}$, and we have
\[
	L(b)=\bar{L}(b)= \int\hat{b} \cdot \tilde{p}~d\mu = \int\hat{b}~d\nu,\quad \textrm{ for all } b\in B,
\]
which yields in particular that $\nu$ is a $\tilde{K}$--representing measure for $L$.
	
It remains to show that $\nu$ is supported in $K$, and this will imply that $\nu$ is actually a $K$--representing measure for $L$. \\
\textbf{Claim 2}. $\tilde{p}(x)=0$ for all $x\in R=\tilde{K}\setminus K$.\\ 
{\textit{Proof}}. Let $x\in R$. Then there exists a net $\{x_{\alpha}\}_{\alpha}\subseteq K$ such that $x=\lim\limits_{\alpha}x_{\alpha}$ in $\tilde{K} \subseteq \X \B$. If $\tilde{p}(x)\neq 0$, then  by \eqref{p-existence} we have that $\lim\limits_{\alpha}\frac{\hat{b}(x_{\alpha})}{\hat{p}(x_{\alpha})}<\infty$ for all $b\in B$. Thus, $\hat{b}(x)<\infty$ for all $b\in B$. This together with the fact that $A$ is generated by $B_p$ implies that for all $a\in A$, 
$\hat{a}(x)<\infty$ and so we have that $x\in \X{A}$. Hence, since $K$ is a closed subset of $\X{A}$, we get that $x\in K$ which is a contradiction. 
This shows that $\tilde{p}$ is identically zero on $R$ and proves the claim. $\qed_{\textrm{Claim 2}}$

Let $M$ be a Borel measurable subset of $\tilde{K}$ such that $M\cap K=\emptyset$. Since $M \subseteq \tilde{K} \setminus K$, Claim 2 implies that $\nu(M)=0$. This proves that $\nu$ is supported in $K$.
\end{proof}

\begin{rem}
The reader may wonder whether the integral representation (\ref{intform}) for $L$ can be extended to $B_p$; i.e., if $\nu$ is a $K$--representing measure for $\bar{L}$. Since $B_p$ is the span of $B$ and $p$, this would amount to extending the representation to $p$. Now, 
\begin{eqnarray*}
\bar{L}(p)=\tilde{L}(1)&=&\mu(\tilde{K})=\mu(\tilde{K} \setminus K)+\mu(K) \\
&=&\mu(\tilde{K} \setminus K)+\int_K \hat{p} \cdot \frac{1}{\hat{p}}~d\mu=\mu(\tilde{K} \setminus K)+\int_K \hat{p}~d\nu.
\end{eqnarray*}
Thus, the possibility of extending (\ref{intform}) to $p$ depends on whether $\mu(\tilde{K} \setminus K)=0$. In the following example we will see that the above mentioned integral representation (with a measure $\nu$ supported in $K$) does not necessarily extend to $B_p$.
\end{rem}

\begin{exm}
Let $A:=\mathbb{R}[X]$, $B:=\mathbb{R}[X]_3$, $K:=\mathbb{R}$ and $p(X):=1+X^4$. Thus, $B_p=\mathbb{R}[X]_4$. Consider now the linear functional on $B$ given by evaluation at the point $1$; that is, $L(b):=b(1)$. It is clear that $L$ admits an integral representation, given by $\nu:=\delta_{1}$, the Dirac delta at $1$. We now define a new linear functional $\bar{L}$ on $B_p$, by $\bar{L}(c):=c(1)+c_4$, where $c_4$ denotes the coefficient of $c$ in $X^4$. Observe that $\bar{L}$ extends $L$ (since $c_4=0$ whenever $c \in \mathbb{R}[X]_3$), and that $\bar{L}$ is $K$--positive; for, if $c \in \mathbb{R}[X]_4$ and $c(x) \le 0$ in $\mathbb{R}$, then $c_4 \ge 0$, and consequently $\bar{L}(c) \ge 0$. If $\bar{L}$ admitted an integral representation over $K$, with measure $\mu$, then $0=L(X^2-2X+1)=L((X-1)^2)=\bar{L}((X-1)^2)=\int_K (x-1)^2~d\mu$ would imply that $x=1$ inside the support of $\mu$; that is, $\mu \restriction_K=\delta_{1}$. On the other hand, the polynomial $q(X):=X^4-X^3$ satisfies $\bar{L}(q)=1 \ne 0 = \int_K q~d\mu$. This contradiction shows that, in general, one cannot expect that (\ref{intform}) extends to $\bar{L}$. In the case at hand, the explanation lies in the support of $\mu$, which in this case extends beyond $K$ into $\tilde{K}=K \cup \{\pm \infty\}$, and therefore $\mu(\tilde{K} \setminus K)=1>0$. \qed
\end{exm}  

We now present a special instance of Theorem \ref{trnctmmnt} of independent interest. It deals with the case of a closed subset $K$ of the character space $\X{A}$ of the form $K=K_1 \times K_2$, with $K_1$ compact and $K_2$ closed; that is, $K$ is of cylindrical shape with compact base. We will focus on the specific case of polynomials in two variables $X \in \mathbb{R}^m$ and $Y \in \mathbb{R}^n$. Concretely, we have the following application of Theorem \ref{trnctmmnt}.

\begin{thm}\label{thm-partially-cmpct}
Let $A:=\mathbb{R}[X,Y]$ with $X \equiv (X_1,\ldots,X_m)$ and $Y \equiv (Y_1,\ldots,Y_n)$, and let $K:=K_1 \times \mathbb{R}^n \subseteq \mathbb{R}^m \times \mathbb{R}^n \cong \X{A}$, with $K_1$ compact in $\mathbb{R}^m$. Also, let $d \in \naturals$, $B:=(\mathbb{R}[X])[Y]_{2d-1}$, and let $p(X,Y)$ be a nonnegative polynomial of degree $2d$ in $Y$. Assume that $L:B \rightarrow \mathbb{R}$ is a $K$--positive linear functional, with a $K$--positive extension to $B_p$. Then there exists a positive Radon measure $\nu$ which is a $K$--representing measure for $L$.
\end{thm}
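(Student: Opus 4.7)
The plan is to apply Theorem~\ref{trnctmmnt} directly; since its hypotheses involve a linear subspace $B$, a closed set $K$, and a distinguished element $p$, I need only verify each one in the current setting.

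First, $K = K_1 \times \reals^n$ is a closed, non-compact subset of $\X{A} \cong \reals^{m+n}$; $B = \reals[X][Y]_{2d-1}$ is a linear subspace of $A = \reals[X,Y]$ containing $1$; and $p \in A \setminus B$, since $\deg_Y p = 2d > 2d-1 \geq \deg_Y b$ for every $b \in B$. Replacing $p$ by $p+1$ if necessary (which leaves $B_p$ and the $K$-positive extension $\bar{L}$ unchanged, because $1 \in B$), I may assume $\hat{p} \ge 1$ on $K$. Since $B$ contains $1$, each $X_i$, and each $Y_j$ (these have $Y$-degree at most $1 \le 2d-1$), the subspace $B_p$ generates the polynomial algebra $A$, and by hypothesis $L$ has a $K$-positive extension $\bar{L}$ to $B_p$.

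The only remaining hypothesis is condition~\eqref{p-existence}, which by Remark~\ref{rem29} is equivalent to
\[
\sup_{(x,y)\in K_1 \times \reals^n}\left|\frac{b(x,y)}{p(x,y)}\right| < \infty \qquad \textrm{for every } b \in B.
\]
My plan here is to expand $b(X,Y) = \sum_{|\alpha| \le 2d-1} b_\alpha(X)\, Y^\alpha$ with $b_\alpha \in \reals[X]$; compactness of $K_1$ bounds each $b_\alpha$ on $K_1$ by continuity. One then exploits the strict inequality $\deg_Y b < \deg_Y p = 2d$, together with the nonnegativity of $p$ on $K$, to conclude that the top-degree-in-$Y$ part of $p$ dominates $b$ uniformly in $(x,y) \in K_1 \times \reals^n$, after the $+1$ shift.

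The main obstacle is precisely this uniform-in-$x$ coercivity estimate: the behavior of $p$ in the variable $Y$ must dominate $b$ simultaneously for every $x \in K_1$, and this requires extracting from the nonnegativity of $p$ together with the compactness of $K_1$ a lower bound on the leading-in-$Y$ behavior of $p$ that is uniform over $K_1$. Once condition~\eqref{p-existence} is verified, Theorem~\ref{trnctmmnt} applies verbatim and produces the desired $K$-representing measure $\nu$ for $L$.
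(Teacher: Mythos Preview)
Your proposal is correct and follows essentially the same route as the paper: replace $p$ by $p+1$, expand $b\in B$ as a polynomial in $Y$ with coefficients in $\reals[X]$, use compactness of $K_1$ to bound those coefficients uniformly, and then invoke the degree gap $\deg_Y b < 2d = \deg_Y p$ to obtain the boundedness in \eqref{p-existence2}, after which Theorem~\ref{trnctmmnt} applies directly. You are in fact more explicit than the paper in checking that $p\in A\setminus B$, that $1\in B$, and that $B_p$ generates $A$; the paper simply asserts the key ratio bound as ``clear'' where you (rightly) flag the uniform-in-$x$ coercivity as the crux.
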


\begin{proof}
Let $q(X,Y):=1+p(X,Y)$. Given a polynomial $b \in B$, written as $b(X,Y)=b_0(X) + b_1(X)Y + \cdots + b_{2d-1}(X)Y^{2d-1}$, let $C_i:=\sup_{x \in K_1} \left|b_i(X)\right| \newline (i=0,1,\cdots,2d-1)$ and let $C:=\max \{C_0,C_1,\cdots,C_{2d-1}\}$. For every $b \in B$ and $(x,y) \in K_1 \times \mathbb{R}^n$, we have 
$$
\left|\frac{b(x,y)}{q(x,y)}\right| \le \frac{C_0+C_1 \left|y\right|+\cdots+C_{2d-1}\left|y\right|^{2d-1}}{q(x,y)}.
$$
Since $q$ has degree $2d$ in $Y$, and is bounded below by $1$, it is clear that we can find a positive constant $\lambda$ (which depends upon $b$) such that $\left|\frac{b(x,y)}{q(x,y)}\right| \le \lambda$, as needed for (\ref{p-existence}). The result now follows from Theorem \ref{trnctmmnt}.
\end{proof}

We conclude this section with a brief discussion of a link between Theorems \ref{cmpttrnctmmnt} and \ref{trnctmmnt} and the following generalization due to M. Marshall (cf. \cite[3.2.2. Theorem]{MurrayBook}) of the classical Haviland Theorem \cite{Hav}. We reformulate it in our setting as follows.

\begin{thm} \label{newcorRH} 
(Generalized Haviland Theorem) \ Let $A$ be a unital commutative $\mathbb{R}$--algebra and let $K$ be a closed subset of $\mathcal{X}(A)$. Suppose there exists $p \in A$ such that $\hat{p} \ge 0$ on $K$ and, for each integer $i \ge 1$, the set $K_i:=\{x \in K: \hat{p}(x) \le i\}$ is compact. Then for any $K$--positive linear functional $L:A \rightarrow\mathbb{R}$ there exists a positive Radon measure $\mu$ supported in $K$ such that $L(a)=\int \hat{a} \; d\mu \textrm{for all } a \in A$.
\end{thm}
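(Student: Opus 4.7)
My plan is to combine Choquet's Lemma (Lemma~\ref{chq}) with the Riesz--Markov representation theorem on a locally compact Hausdorff space, using the element $p$ to supply the growth control needed to extend the resulting representation from $C_c(K)$ to all of $\hat A$. First, I would replace $p$ by $1+p$ so as to assume $\hat p \ge 1$ on $K$; this preserves the compactness of the sublevel sets $K_i := \{x\in K : \hat p(x) \le i\}$ after the obvious reindexing. The cover $K = \bigcup_{i\ge 1} K_i$ then exhibits $K$ as a $\sigma$-compact Hausdorff space; since each $x \in K$ lies in the open set $\{\hat p < i+1\}$ whose closure is contained in the compact $K_{i+1}$, $K$ is also locally compact. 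Finally, by $K$-positivity, $\hat a \equiv 0$ on $K$ forces $\pm a \in \Psd{A}{K}$ and hence $L(a)=0$, so $L$ descends to a well-defined positive linear functional $\bar L(\hat a) := L(a)$ on $\hat A := \{\hat a : a \in A\} \subseteq C(K)$.

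I would then invoke Lemma~\ref{chq} with $W := \hat A$, $V := C(K)$, and $C := \{f \in C(K) : f \ge 0\}$. Since $\hat A$ contains the constants, $W_C = (W+C) \cap (W-C)$ contains every bounded continuous function on $K$, and in particular $C_c(K)$. This produces a linear extension $\tilde L : W_C \to \reals$ nonnegative on $W_C \cap C$; applying Riesz--Markov to $\tilde L\restriction_{C_c(K)}$ then furnishes a positive Radon measure $\mu$ on $K$ with $\tilde L(f) = \int f\, d\mu$ for every $f \in C_c(K)$.

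The heart of the argument, and the main obstacle, is to promote this identity from $f \in C_c(K)$ to $f = \hat a$ for arbitrary $a \in A$, since $\hat a$ may well be unbounded on $K$. I would introduce the Urysohn-type cutoffs
\[
\phi_n(x) := \min\{1,\, \max\{0,\, n+1 - \hat p(x)\}\} \in C_c(K),
\]
which satisfy $\phi_n \equiv 1$ on $K_n$, $\supp \phi_n \subseteq K_{n+1}$, $\phi_n \uparrow 1$ pointwise, and, crucially, $1 - \phi_n \le \hat p / n$ on $K$ (trivial on $K_n$; elsewhere one uses $\hat p > n$). For fixed $a \in A$, set $c := a^2 + 1 \in A$, so that $\hat c \in \hat A$ and $\hat c \ge |\hat a|$ on $K$. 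Then $\pm \hat a(1 - \phi_n)$ is dominated by $\hat c(1-\phi_n) \le \widehat{cp}/n \in \hat A \subseteq W_C$, and positivity of $\tilde L$ yields
\[
|\tilde L(\hat a(1-\phi_n))| \;\le\; \tfrac{1}{n}\, L(cp) \;\xrightarrow[n\to\infty]{}\; 0,
\]
so $\tilde L(\hat a\,\phi_n) \to L(a)$. Applying the same estimate with $c$ in place of $a$ and then invoking monotone convergence on $\hat c\,\phi_n \uparrow \hat c$ yields $\int \hat c\, d\mu = L(c) < \infty$, so $\hat a$ is $\mu$-integrable; dominated convergence now delivers
\[
\int \hat a\, d\mu \;=\; \lim_n \int \hat a\,\phi_n\, d\mu \;=\; \lim_n \tilde L(\hat a\,\phi_n) \;=\; L(a),
\]
which completes the proof.
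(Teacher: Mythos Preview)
Your argument is correct. The reduction to $\hat p \ge 1$, the verification that $K$ is locally compact Hausdorff, the use of Lemma~\ref{chq} to extend $\bar L$ positively to a subspace containing $C_c(K)$, the Riesz--Markov step, and the cutoff estimates $1-\phi_n \le \hat p/n$ together with $|\hat a| \le \widehat{a^2+1}$ all go through as written. The key chain of inequalities $|\tilde L(\hat a(1-\phi_n))| \le \tilde L(\hat c(1-\phi_n)) \le \tfrac{1}{n}L(cp)$ is valid because $\hat a(1-\phi_n) \in W_C$ (as the difference of $\hat a \in W$ and $\hat a\phi_n \in C_c(K) \subseteq W_C$) and $\widehat{cp}/n \mp \hat a(1-\phi_n) \in W_C \cap C$, so positivity of $\tilde L$ on $W_C \cap C$ applies.

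It is worth noting, however, that the paper does \emph{not} supply a proof of Theorem~\ref{newcorRH} in the non-compact case: it quotes the result from Marshall's book, observes that the compact case follows from Theorem~\ref{cmpttrnctmmnt}, and explicitly remarks that the authors were unable to derive the general case from Theorem~\ref{trnctmmnt}. Your proof therefore fills a gap the paper leaves open, albeit not along the route the authors had in mind. Rather than passing through the seminormed machinery of Theorem~\ref{posext}/\ref{trnctmmnt}, you give a direct argument in the classical style of Haviland-type theorems: Choquet's Lemma furnishes the positive extension to $C_c(K)$, Riesz--Markov produces the measure, and the growth control encoded in $p$ (via the sublevel compactness and the inequality $1-\phi_n \le \hat p/n$) handles the passage to unbounded $\hat a$. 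This is essentially the strategy behind Marshall's original proof, so while it does not realize the paper's hope of deducing the result from Theorem~\ref{trnctmmnt}, it is a clean and self-contained argument.
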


Applying Theorem \ref{newcorRH} in the special case of $A:=\reals[\underline{X}]$ and $p(X):=X_1^2 + \ldots + X_n^2$ to a closed subset $K$ of $\reals^n$, with $K_i$ compact for each integer $i \ge 1$, Marshall obtained the classical Haviland Theorem.

The element $p$ in Theorem \ref{newcorRH} creates an ascending chain of compact subsets of $K$; that is, $K=\bigcup_{i=1}^{\infty} K_i$. As a result, $K$ is $\sigma$--compact. When $\hat{p}$ is bounded (as a continuous function on $K$), say $\hat{p} \le M$ for some positive integer $M$, it is easy to see that $K$ is compact; for, $K=K_M$ in that case. We can then appeal to Theorem \ref{cmpttrnctmmnt} to find the positive Radon measure $\mu$ that represents $L$. However, in the general case of $K$ closed, we have not been able to apply Theorem \ref{trnctmmnt} to obtain a new proof of Theorem \ref{newcorRH}. We believe this should be possible.

\section{\sc{\large{Support of the Representing Measures}}}\label{SupportRepMeas}

Our main results (Theorems \ref{cmpttrnctmmnt} and \ref{trnctmmnt}) deal with the existence of a representing measure with prescribed support of a positive linear functional on linear subspaces of $A$. In practice it is usually desirable to have some more information about the support of the 
representing measure. In fact, when dealing with a finite dimensional linear subspace $B$ of $\rx$, V. Tchakaloff established that the 
support of the representing measure is finite and its cardinality is bounded by the dimension of $B$ (see \cite[Theorem II]{Tchakaloff}, \cite[Theorem 2]{BT} and \cite{Richter}). The main tool enabling us to provide, in this case, a more precise description for the support is 
Carath\'eodory's Theorem. 

\subsection{\sc A generalization of Tchakaloff's Theorem} \ We aim to provide the following generalization of the celebrated theorem of Tchakaloff \cite[Theorem II]{Tchakaloff}. 

\begin{thm} \label{generalizedTchakaloff}
Let $K$ be compact and $B$ a linear subspace of $C(K)$, with $\dim B=N<\infty$. Let $L:B \rightarrow\reals$ be a linear functional such that $L(1)=1=\norm{}{L}$ and $L(b)\geq 0$ for all $b\in B$. Then there exist $x_1,\dots,x_m \in K$ and 
$\lambda_1,\dots,\lambda_m\ge0$, $m\leq N$, with $\sum_{i=1}^m\lambda_i=1$ such that 
\[
	\mu := \sum_{i=1}^m\lambda_i\delta_{x_i},
\]
is a $K$-representing measure for $L$, where $\delta_x$ is the Dirac measure at $x$.
\end{thm}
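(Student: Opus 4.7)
The plan is to split the argument into two steps. First, produce \emph{some} $K$--representing positive Radon measure for $L$ by invoking Theorem \ref{cmpttrnctmmnt} (the Compact Case). Second, use the finite dimensionality of $B$ together with Carath\'eodory's theorem to replace this measure by one supported on at most $N$ points of $K$.

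For Step 1, take $A:=\Cnt{}{K}$, whose character space is identified with $K$ via point evaluation (so that $\hat{a}=a$ for every $a\in A$). Since $L(1)=1$ we have $1\in B$, and $q:=1$ serves as the strictly positive element on $K$ required by Theorem \ref{cmpttrnctmmnt}. The positivity of $L$ on $\Psd{B}{K}$ (which is implied by $L(1)=\norm{}{L}=1$ via the elementary observation that $|1-L(b)|=|L(1-b)|\leq\rho_K(1-b)\leq 1$ for $b\in B$ with $0\leq b\leq 1$ on $K$, followed by scaling) then lets us apply Theorem \ref{cmpttrnctmmnt}. This furnishes a positive Radon measure $\mu_0$ supported in $K$ with $L(b)=\int_K b\,d\mu_0$ for all $b\in B$, and the normalization $L(1)=1$ forces $\mu_0$ to be a probability measure.

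For Step 2, fix a basis $b_1=1,b_2,\ldots,b_N$ of $B$ and define the continuous map $\Phi:K\to\reals^N$ by $\Phi(x):=(b_1(x),\ldots,b_N(x))$. Then $\Phi(K)$ is compact and lies inside the affine hyperplane $H:=\{y\in\reals^N: y_1=1\}$ of dimension $N-1$. The vector
\[
	v:=(L(b_1),\ldots,L(b_N))=\int_K\Phi\,d\mu_0
\]
is the barycenter of the pushforward probability measure $\Phi_\ast\mu_0$, hence lies in the compact convex hull $\mathrm{conv}(\Phi(K))\subseteq H$. The affine version of Carath\'eodory's theorem, applied inside $H$, produces $x_1,\ldots,x_m\in K$ and $\lambda_1,\ldots,\lambda_m\geq 0$ with $m\leq (N-1)+1=N$, $\sum_{i=1}^m\lambda_i=1$, and $v=\sum_{i=1}^m\lambda_i\Phi(x_i)$. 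Setting $\mu:=\sum_{i=1}^m\lambda_i\delta_{x_i}$, the identity $\int_K b_j\,d\mu=L(b_j)$ holds for every basis vector, and hence by linearity $\int_K b\,d\mu=L(b)$ for all $b\in B$, giving the desired atomic $K$--representing measure.

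The principal obstacle is Step 1: exhibiting any $K$--representing Radon measure before invoking the finite-dimensional reduction. Once $\mu_0$ is in hand, the Carath\'eodory step is essentially classical; the sharper bound $m\leq N$ (rather than $N+1$) hinges on the presence of $1\in B$, which confines $\Phi(K)$ to a codimension-one affine hyperplane of $\reals^N$ and so reduces the effective ambient dimension by one.
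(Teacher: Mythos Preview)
Your proof is correct. Both your argument and the paper's end with the same Carath\'eodory step in $\reals^N$ (exploiting $1\in B$ to drop to an $(N-1)$--dimensional affine slice and obtain the bound $m\le N$), but the routes to that step differ. The paper works directly in the dual: it introduces the state space $\St{B}\subseteq B^{\ast}$, proves via a Hahn--Banach separation argument (Lemma~\ref{newlem}) that $\St{B}=\overline{\textrm{co}}(\mathcal{E}(K))$, and then observes that $\textrm{ext}(\St{B})\subseteq\mathcal{E}(K)$; the Choquet--Bishop--de~Leeuw framework is invoked to place $L$ as a barycenter of a measure on the Choquet boundary, after which finite-dimensionality and Carath\'eodory finish. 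You instead reuse Theorem~\ref{cmpttrnctmmnt} to produce a representing probability measure $\mu_0$ outright, push it forward along the moment map $\Phi$, and read off membership of the moment vector in $\textrm{conv}(\Phi(K))$. Your route is more self-contained within the paper (it recycles the main existence theorem rather than proving a separate separation lemma) and sidesteps the Choquet machinery entirely; the paper's route, on the other hand, makes the connection to Choquet theory explicit and identifies the support as lying in the Choquet boundary $\textrm{Ch}_K(B)$, which is slightly sharper information than merely ``points of $K$''.
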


Theorem \ref{generalizedTchakaloff} can be derived from the Choquet-Bishop-de Leeuw Theorem, as we now discuss.

The \textit{state space} of $B$ 
is the subset of the dual $B^{\ast}$ of $B$ given by
\[
	\St{B}:=\{L\in B^{\ast}: L(1)=1=\norm{}{L}, \; L(b)\geq 0\, \textrm{ for all } b\in B \}.
\]
$\St{B}$ is a closed and convex subset of the unit ball in $B^{\ast}$ and hence it is compact in the weak-$\ast$ topology. The evaluation map $\map{\mathcal{E}}{K}{\St{B}}$, which sends every $x\in K$ to $e_x$ (the evaluation at $x$), is a topological embedding. \\ By the Krein-Milman Theorem, 
\begin{equation} \label{KM}
\St{B}=\overline{\textrm{co}}(\textrm{ext}(\St{B})),
\end{equation}
where $\overline{\textrm{co}}(\textrm{ext}(\St{B}))$ denotes the closure of the convex hull of the extreme points of $\St{B}$. We briefly pause to give a different description of $\St{B}$.

\begin{lemma} \label{newlem} 
For $K$ compact and $B$ a linear subspace of $C(K)$, we have
$$
\St{B}=\overline{\textrm{co}}(\mathcal{E}(K)).
$$
\end{lemma}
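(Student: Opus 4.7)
The plan is to establish the two inclusions of $\St{B}=\overline{\textrm{co}}(\mathcal{E}(K))$ separately, with the nontrivial direction handled by a weak-$\ast$ Hahn--Banach separation argument.

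For the inclusion $\overline{\textrm{co}}(\mathcal{E}(K))\subseteq \St{B}$, I would first check that every evaluation $e_x$ with $x\in K$ lies in $\St{B}$: clearly $e_x(1)=1$ and $e_x$ is $K$-positive; the estimate $|e_x(b)|\le \rho_K(b)$ yields $\|e_x\|\le 1$, while $\|e_x\|\ge e_x(1)=1$. Hence $\mathcal{E}(K)\subseteq \St{B}$. Because $\St{B}$ is convex and is an intersection of weak-$\ast$ closed sets (the unit ball, the hyperplane $\{L:L(1)=1\}$, and the half-spaces $\{L:L(b)\ge 0\}$ for $b\in\Psd{B}{K}$), it is weak-$\ast$ closed, so it contains $\overline{\textrm{co}}(\mathcal{E}(K))$.

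For the reverse inclusion I would argue by contradiction. Suppose some $L\in\St{B}$ lies outside the weak-$\ast$ closed convex set $\overline{\textrm{co}}(\mathcal{E}(K))$. Applying the Hahn--Banach separation theorem in the locally convex space $(B^{\ast},\text{weak-}\ast)$ to the compact singleton $\{L\}$ and the disjoint closed convex set $\overline{\textrm{co}}(\mathcal{E}(K))$, one obtains a weak-$\ast$ continuous linear functional on $B^{\ast}$ and a scalar $c$ strictly separating them. Since the weak-$\ast$ continuous linear functionals on $B^{\ast}$ are precisely the evaluations at elements of $B$, this produces $b\in B$ such that
\[
L(b)>c\ge e_x(b)=b(x)\quad\textrm{for all } x\in K.
\]
Thus $c-b\ge 0$ on $K$, i.e., $c\cdot 1-b\in\Psd{B}{K}$. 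By $K$-positivity of $L$ together with $L(1)=1$, we get $L(c-b)=c-L(b)\ge 0$, contradicting $L(b)>c$. Hence $\St{B}\subseteq\overline{\textrm{co}}(\mathcal{E}(K))$.

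The main (mild) obstacle is simply bookkeeping around the correct topology: one needs weak-$\ast$ separation rather than norm separation, so that the separating functional can be identified with an element of $B$; otherwise the key inequality $c-b\ge 0$ on $K$ would not be available to play against $K$-positivity. Once this is set up, the argument reduces to a single application of separation combined with the defining property of $\St{B}$.
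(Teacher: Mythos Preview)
Your proof is correct and follows essentially the same route as the paper's: both establish the easy inclusion via convexity and weak-$\ast$ closedness of $\St{B}$, and for the reverse inclusion invoke Hahn--Banach separation in $(B^{\ast},w^{\ast})$, identify the separating functional with some $b\in B$ via $(B^{\ast},w^{\ast})^{\ast}\cong B$, and derive a contradiction from $K$-positivity and $L(1)=1$. The only cosmetic difference is the orientation of the separating inequality; note that both arguments implicitly use $1\in B$ (already built into the definition of $\St{B}$).
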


\begin{proof}
Since $\mathcal{E}(K) \subseteq \St{B}$, and $\St{B}$ is a compact and convex subset of the unit ball in $B^{\ast}$ with respect to the weak-$\ast$ topology, it is clear that $C:=\overline{\textrm{co}}(\mathcal{E}(K)) \subseteq \St{B}$. For the other inclusion, recall first that since $B$ is a locally convex topological vector space, we know that $B$ is isometrically isomorphic to the dual of its dual equipped with the weak-$\ast$ topology; in symbols, 
\begin{equation} \label{dual}
(B^{\ast},w^{\ast})^{\ast}\cong B
\end{equation}
(cf. \cite[Theorem 8.1.4]{Con2}). Suppose now that there exists a linear functional $L_0 \in \St{B}$ such that $L_0 \notin C $. The singleton $\{L_0\}$ is a weak-$\ast$ compact set satisfying $\{L_0\} \bigcap C = \emptyset$. It follows that there exists $\varphi \in (B^{\ast},w^{\ast})^{\ast}$ and a real number $\alpha$ such that $\varphi(L_0)<\alpha$ and $\varphi(L) \ge \alpha$ for every $L \in C$ (cf. \cite[Corollary 2.2.3]{Edw}). By (\ref{dual}), there exists $f \in B$ such that $\varphi(L)=L(f)$ for all $L \in B^{\ast}$ and, in particular, for all $e_x \in \mathcal{E}(K)$. Therefore, $f(x)=L(f) \ge \alpha$ for all $x \in K$. We then have $f-\alpha \ge 0$ on $K$. Since $L_0 \in \St{B}$, we must have $L_0(f-\alpha) \ge 0$; that is, $L_0(f) \ge \alpha$, a contradiction.
\end{proof}

By Lemma \ref{newlem} and (\ref{KM}), we now see that $\textrm{ext}(\St{B})\subseteq\mathcal{E}(K)$. For, amongst all closed subsets $F$ of $\St{B}$ with the property that $\overline{\textrm{co}}(F)=\St{B}$, the set of extreme points is the smallest (cf. \cite[Theorem 8.4.6]{Con2}). 

The Choquet boundary 
of $K$ (with respect to $B$) is defined as
\[
	\textrm{Ch}_K(B):=\mathcal{E}^{-1}(\textrm{ext}(\St{B}))\subseteq K.
\]

The Choquet-Bishop-de Leeuw Theorem states that for every $L\in\St{B}$ there exists a measure (not necessarily Radon) supported in 
$\textrm{Ch}_K(B)$ such that
\[
	L(b)=\int\hat{b}~d\mu, \textrm{ for all } b\in B
\]
and $\mu(\textrm{Ch}_K(B))=1$. Then Theorem \ref{generalizedTchakaloff} is an immediate consequence of the Choquet-Bishop-de Leeuw Theorem and provides the desired generalization of Tchakaloff's theorem as well as its versions in \cite{tcmp8, Curto-Fialkow-2005, Curto-Fialkow-2008, Fialkow-Nie}. Thus, Lemma \ref{newlem} and the subsequent discussion can be regarded as a generalization of Tchakaloff's Theorem \cite[Theorem II]{Tchakaloff}. \\ On the other hand, it was proved in \cite{BT} (see also \cite{Richter}, \cite[Corollary 1.25]{Sch} and \cite[Theorem 6]{diDio}) that a soluble $B$--truncated moment problem, with $B$ finite dimensional linear subspace of $C(K)$ and $K$ locally compact, always admits a finitely atomic representing measure.

\subsection{Classes of measures} \ For the space of polynomials $\mathbb{R}[X]$, regarded as a linear subspace of $C(K)$ (with $K$ compact in $\reals$), we know that $\mathbb{R}[X]$ is dense in $\mathcal{L}^1(\mu)$ if and only if $\mu$ is an extremal point of the unit ball of the space of measures on $K$; that is, $\mu$ is a Dirac delta. The result below attempts to mimic this when $\mathbb{R}[X]$ is replaced by an arbitrary linear subspace $B$ of $A$. In other words, think of $B$ as providing a generalization of the space of polynomials. This naturally leads to the notion of equivalence relation defined below.

Let $A=\Cnt{}{K}$, with $K$ compact. It is well known that every state $L\in\St{A}$ is representable via a positive Radon measure $\mu$ supported in $K$, i.e., $L(a)=L_{\mu}(a):=\int a~d\mu$ for all $a\in A$; this is the Riesz–Markov–Kakutani Theorem \cite[4.3.8. Theorem]{Con2}.

On the other hand, every linear subspace $B$ of $A$ containing $1$ (not necessarily finite dimensional) induces an equivalence relation on $\St{A}$, denoted by $\sim_B$ and defined by
\[
	L_1\sim_B L_2 \Leftrightarrow L_1|_B = L_2|_B.
\]

Note that if $L_1\sim_B L_2$, then for every $t\in[0,1]$ and $b\in B$, $L_1(b)=tL_1(b)+(1-t)L_2(b)= L_2(b)$, so 
equivalence classes of $\sim_B$ are convex. \\ Moreover, if $(L_{\alpha})_{\alpha}$ is a convergent net of 
$\sim_B$--equivalent states in weak-$\ast$ topology, then for every $b\in B$ and any $L\in[L_\alpha]_{\sim_B}$, 
$\lim\limits_{\alpha}L_{\alpha}(b)=L(b)$, so equivalence classes are also closed and hence compact. \\ 
By the Krein-Milman Theorem, $[L]_{\sim_B}$ is the closure of the convex hull of its extreme points.
The following result characterizes the extreme points of $\sim_B$ equivalence classes in terms of representing measures. \\ 
This was originally proved by Naimark \cite{Naimark} for the algebra of polynomials.
The proof given here is a modification of \cite[Theorem 2.3.4]{Akh} due to Gelfand. In fact, Gelfand proved this for $B=\mathbb{C}[z]$; here we adapt his proof to the case of a general $B$. 

\begin{thm} \label{thm42}
Let $K$ be compact and $B \subseteq C(K)$. The linear functional $L=L_{\mu}$ is an extreme point of $[L]_{\sim_B}$ if and only if 
$\cl{\norm{1,\mu}{}}{B}=\mathcal{L}^1(\mu)$.
\end{thm}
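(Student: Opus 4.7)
My plan is to prove both implications via a Hahn-Banach/Radon-Nikodym argument, treating the equivalence class $[L_\mu]_{\sim_B}$ as a convex set of probability measures on the compact space $K$ that agree with $\mu$ when integrated against elements of $B$. Throughout, I think of $L^1(\mu)$ and $L^\infty(\mu)$ as real Banach spaces in the usual duality $(L^1)^* = L^\infty$, and I will use crucially that $K$ compact forces $\mu$ to be a finite Radon measure, so $L^\infty(\mu) \subseteq L^1(\mu)$, and that $1 \in B$ guarantees $\mu$ is a probability measure.

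For the direction ($\Leftarrow$), assume $\overline{B}^{\|\cdot\|_{1,\mu}} = \mathcal{L}^1(\mu)$, and suppose $L_\mu = tL_{\mu_1} + (1-t)L_{\mu_2}$ for some $t \in (0,1)$ and $L_{\mu_1}, L_{\mu_2} \in [L_\mu]_{\sim_B}$. Then $\mu = t\mu_1 + (1-t)\mu_2$ as Radon measures on $K$, so $\mu_1 \ll \mu$ with a Radon-Nikodym derivative $f = d\mu_1/d\mu$ satisfying $0 \le f \le 1/t$, hence $g := f - 1 \in L^\infty(\mu)$. The condition $L_{\mu_1}|_B = L_\mu|_B$ translates to $\int b \, g \, d\mu = 0$ for all $b \in B$; that is, the bounded linear functional $h \mapsto \int h \, g \, d\mu$ on $L^1(\mu)$ vanishes on the dense subspace $B$ and so vanishes identically. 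Testing against $h = \operatorname{sign}(g) \in L^\infty(\mu) \subseteq L^1(\mu)$ gives $\int |g| \, d\mu = 0$, hence $\mu_1 = \mu$, and symmetrically $\mu_2 = \mu$.

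For the direction ($\Rightarrow$), I argue by contrapositive: assume $B$ is not dense in $\mathcal{L}^1(\mu)$. By Hahn-Banach applied in the duality $(L^1(\mu))^* = L^\infty(\mu)$, there exists a nonzero real $g \in L^\infty(\mu)$ with $\int b \, g \, d\mu = 0$ for all $b \in B$. Normalize so that $\|g\|_\infty \le 1$, and define signed measures $\mu_\pm := (1 \pm g)\mu$. Since $|g| \le 1$, both $\mu_+$ and $\mu_-$ are positive Radon measures on $K$; since $1 \in B$, the orthogonality condition at $b=1$ gives $\int g\, d\mu = 0$, so $\mu_\pm(K) = 1$ and both are probability measures. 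By construction $\mu = \tfrac{1}{2}\mu_+ + \tfrac{1}{2}\mu_-$, and the orthogonality relation shows $L_{\mu_\pm}|_B = L_\mu|_B$, so $L_{\mu_\pm} \in [L_\mu]_{\sim_B}$. Finally, $\mu_+ \ne \mu_-$ because otherwise $g\mu = 0$ would force $g = 0$ $\mu$-a.e., contradicting $g \ne 0$ in $L^\infty(\mu)$; hence $L_\mu$ is not extreme.

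The main delicate point is ensuring the existence of the separating functional $g$ inside $L^\infty(\mu)$ rather than merely in $(\mathcal{L}^1(\mu))^*$, which is what allows me to build \emph{positive} perturbations $\mu_\pm$ rather than only signed ones; this is where I really use compactness of $K$ (so that $\mu$ is finite and $L^\infty \hookrightarrow L^1$) and that $1 \in B$ (to guarantee $\mu_\pm$ are probability measures and not merely bounded). The forward direction uses the same inclusion $L^\infty \subseteq L^1$, applied to the Radon-Nikodym derivative, which is where density of $B$ in $\mathcal{L}^1(\mu)$ gets leveraged against a bounded function. I do not expect serious obstacles beyond being careful about the real-valued setup and using the positivity of $L_{\mu_i}$ to get $\mu_i \le \mu/t$ when passing to Radon-Nikodym derivatives.
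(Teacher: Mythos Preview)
Your proposal is correct and follows essentially the same approach as the paper: for the contrapositive of ($\Rightarrow$) you use Hahn--Banach in the $(L^1)^*=L^\infty$ duality to produce a nonzero $g\in L^\infty(\mu)$ annihilating $B$ and then perturb to $\mu_\pm=(1\pm g)\mu$, and for ($\Leftarrow$) you use Radon--Nikodym to show the density $d\mu_1/d\mu - 1$ is a bounded function orthogonal to the dense subspace $B$, hence zero. The only cosmetic difference is that the paper argues ($\Leftarrow$) by contrapositive rather than directly, and phrases the Hahn--Banach step via the annihilator $B^\perp$; your explicit verification that $1\in B$ forces $\mu_\pm$ to be probability measures is a helpful detail the paper leaves implicit.
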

\begin{proof}
($\Rightarrow$) 
Suppose that $\cl{\norm{1,\mu}{}}{B}\neq\mathcal{L}^1(\mu)$. Let $B^{\perp}:=\{\ell\in\mathcal{L}^1(\mu)':\ell|_B=0\}$; $(B^{\perp})^{\perp}$ is defined accordingly.

Since $\cl{\norm{1,\mu}{}}{B}=(B^{\perp})^{\perp}$, we conclude that $B^{\perp}\neq \{0\}$. \\ Take $0\neq\ell\in B^{\perp}\subseteq\mathcal{L}^1(\mu)'=\mathcal{L}^{\infty}(\mu)$. There exists a non-constant $f\in\mathcal{L}^{\infty}(\mu)$ such that $\ell(g)=\int gf~d\mu$ for all $g\in\mathcal{L}^1(\mu)$. \\ Without loss of generality, assume $\norm{\infty}{f}\leq1$ and let $\nu_{\pm}=(1\pm f)\mu$. \\ Observe that $\nu_{\pm} \ge 0.$ Then $\mu=\frac{1}{2}(\nu_++\nu_-)$ and for every $b\in B$,
\[
\begin{array}{lcl}
	\int b~d\nu_{\pm} = \int b(1\pm f)~d\mu = \int b~d\mu\pm\int bf~d\mu = \int b~d\mu\pm\ell(b) = \int b~d\mu.
\end{array}
\]
Thus $L_{\nu_{\pm}}\in[L_{\mu}]_B$ and $L_{\mu}$ can not be an extreme point, a contradiction. \\ 
Hence $\cl{\norm{1,\mu}{}}{B}=\mathcal{L}^1(\mu)$.

($\Leftarrow$)
Suppose that $L_{\mu}$ is not an extreme point of $[L_{\mu}]_{\sim_B}$. \\ 
Then $\mu=\frac{1}{2}(\nu_1+\nu_2)$ for some $\nu_1 \ne \nu_2$ whose corresponding functionals 
$L_{\nu_1}, L_{\nu_2}\in[L_\mu]_{\sim_B}$, and as a consequence $\nu_i\leq2\mu$, $i=1,2$. \\ Therefore, $\nu_i\ll2\mu$ and by the Radon-Nikodym Theorem there exist positive measurable functions $f_1\neq f_2$ such that 
$\int g~d\nu_i=\int g f_i~2d\mu$ for all $g \in \mathcal{L}^1(\mu)$, $i=1,2$. \\ Since $L_{\nu_1},L_{\nu_2}\in[L_{\mu}]_{\sim_B}$ we get $\int b(f_1-f_2)~2d\mu=L_{\nu_{1}}(b)-L_{\nu_{2}}(b)=0$ 
for all $b\in B$. Thus, $0\neq(f_1-f_2)\in\mathcal{L}^{\infty}(\mu)\cap B^{\perp}$, which implies that 
$(B^{\perp})^{\perp}=\cl{\norm{1,\mu}{}}{B}\neq\mathcal{L}^1(\mu)$, a contradiction.
\end{proof}

\section{\sc{\large{Full Moment Problem VS Truncated Moment Problem}}}\label{FullVSTruncated}

In this section we prove an analogue of J. Stochel's theorem \cite{JSt} about the connection between full and truncated moment problems in the general setting of unital commutative $\reals$--algebras (see Definition \ref{B-TKMP}).  Stochel proved that the full moment problem for a linear functional $\map{L}{\rx}{\reals}$ has a solution if and only if for each $k \in \naturals$ the truncated moment problem for $L \restriction \rx_k$ has a solution. Recall that $\reals[\ux]_k$ denotes the space of all polynomials in $\reals[\ux]$ of degree at most $k$.

The following gives a version of Stochel's theorem in the more general framework of unital commutative $\reals$--algebras (see Theorem \ref{Stochel}).
\begin{dfn} \label{def51}
Let $\Omega$ be a directed set. A family $\{B_{\omega}\}_{\omega \in \Omega}$ is said to be \textit{subcofinal} if for any infinite directed $\Gamma \subseteq \Omega$, $\{B_{\omega}\}_{\omega \in \Gamma}$ is cofinal in $\{B_{\omega}\}_{\omega \in \Omega}$, i.e., for every $\omega\in\Omega$ there exists $\gamma\in\Gamma$ such that $\gamma\geq\omega$ and $B_{\gamma} \supseteq B_{\omega}$.

Let $K\subseteq\X{A}$ be a locally compact closed set.
A subcofinal family $\{B_{\omega}\}_{\omega\in\Omega}$ of linear subspaces of $A$ is called a \textit{truncated $K$--frame} if 
\begin{enumerate}
      \item{$\lim_{\omega\in\Omega} B_{\omega}=A$, where $\lim_{\omega\in\Omega} B_{\omega}:=\bigcup_{\omega}\bigcap_{\gamma \ge \omega}B_{\gamma}$,}
      \item{$1\in B_{\omega}$ for all $\omega \in \Omega$, and}
      \item{for every $f\in B_{\omega}$, there exist $\gamma\ge\omega$ and $p_f\in \Psd{B_{\gamma}}{K}$ such that 
      $\frac{\hat{f}}{\hat{p}_f}\in\Cnt{0}{K}$,}
\end{enumerate}
where $\Cnt{0}{K}$ denotes the algebra of continuous real-valued functions on $K$ that vanish at infinity. Since $\Cnt{0}{K}
\subseteq C_b(K)$, we equip $\Cnt{0}{K}$ with the norm $\rho_K$ in \eqref{sup-norm-K}, making it a Banach algebra. 
\end{dfn}

It is easy to show that a subnet of a $K-$frame is a $K-$frame. For $K \subseteq \X{A}$ closed, we denote by $\M{}{K}$ the space of all signed Radon measures supported in $K$ and by $\M{+}{K}$ the cone of positive Radon measures supported in $K$. We endow $\M{}{K}$ with the weak-$\ast$ topology. \label{pagemeasure}

\begin{thm}\label{Stochel}
Let $K$ be a locally compact closed subset of $\X{A}$ and $L$ a linear functional on $A$ such that $L(1)>0$. The full $K$--moment problem for $\map{L}{A}{\reals}$ has a solution if and only if there exists a truncated $K$--frame 
$\{B_{\omega}\}_{\omega\in\Omega}$ such that every $L_{\omega}=L|_{B_{\omega}}$ has a $K$--representing measure.
\end{thm}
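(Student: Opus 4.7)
\medskip

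\textbf{Plan of proof.} The forward implication is the easy one: given a $K$-representing measure $\mu$ for $L$ and any truncated $K$-frame $\{B_\omega\}_{\omega\in\Omega}$ (which we will need to exhibit), the restriction of $\mu$ clearly represents $L_\omega=L|_{B_\omega}$ for each $\omega$. To produce a frame, I would let $\Omega$ be the collection of finite subsets of $A$ ordered by inclusion and, for $F\in\Omega$, enlarge $\spn(F\cup\{1\})$ so as to absorb a dominating element $p_f$ for each $f\in F$, chosen from $A$ in a way that satisfies $\hat{f}/\hat{p}_f\in\Cnt{0}{K}$. Verifying conditions (1)--(3) of Definition \ref{def51} is then straightforward from the construction. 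The substance of the theorem is the reverse implication, which I would prove as follows.

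\smallskip

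Suppose $\{B_\omega\}_{\omega\in\Omega}$ is a truncated $K$-frame with a $K$-representing measure $\mu_\omega$ of $L_\omega$ for each $\omega$. Because $1\in B_\omega$, each $\mu_\omega$ is a positive Radon measure on $K$ of fixed total mass $\mu_\omega(K)=L_\omega(1)=L(1)$. Since $K$ is locally compact Hausdorff, the space $\M{}{K}$ is the dual of $\Cnt{0}{K}$, and so by Banach--Alaoglu the closed ball of radius $L(1)$ in $\M{+}{K}$ is weak-$\ast$ compact. Extract a weak-$\ast$ convergent subnet $\mu_{\omega_\gamma}\to\mu\in\M{+}{K}$. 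The remaining task is to show that $L(f)=\int\hat{f}\,d\mu$ for every $f\in A$.

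\smallskip

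Fix $f\in A$. By condition (1) of Definition \ref{def51} there exists $\omega_0$ with $f\in B_{\omega_0}$, and by condition (3) we may enlarge $\omega_0$ (using subcofinality) to find $p_f\in\Psd{B_{\omega_0}}{K}$ with $g:=\hat{f}/\hat{p}_f\in\Cnt{0}{K}$. The subcofinality of the frame together with the subnet property guarantees that eventually $B_{\omega_\gamma}\supseteq B_{\omega_0}$, so eventually
\[
\int\hat{f}\,d\mu_{\omega_\gamma}=L(f)\quad\text{and}\quad\int\hat{p}_f\,d\mu_{\omega_\gamma}=L(p_f).
\]
Since $\hat{p}_f\ge 0$ is continuous, weak-$\ast$ lower semicontinuity gives $\int\hat{p}_f\,d\mu\le\liminf\int\hat{p}_f\,d\mu_{\omega_\gamma}=L(p_f)<\infty$, so $\hat{p}_f$ (and hence $\hat{f}=g\hat{p}_f$) is in $\mathcal{L}^1(\mu)$. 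Given $\epsilon>0$, choose a compact $K_\epsilon\subseteq K$ with $|g|<\epsilon$ on $K\setminus K_\epsilon$ and, by Urysohn, a cut-off $\phi_\epsilon\in\Cnt{}{K}$ with compact support, $0\le\phi_\epsilon\le 1$ and $\phi_\epsilon\equiv 1$ on $K_\epsilon$. Splitting $\hat{f}=\phi_\epsilon\hat{f}+(1-\phi_\epsilon)\hat{f}$, the first summand lies in $\Cnt{0}{K}$ and hence $\int\phi_\epsilon\hat{f}\,d\mu_{\omega_\gamma}\to\int\phi_\epsilon\hat{f}\,d\mu$; the second summand satisfies the pointwise bound $|(1-\phi_\epsilon)\hat{f}|\le\epsilon\hat{p}_f$, whence
\[
\left|\int(1-\phi_\epsilon)\hat{f}\,d\mu_{\omega_\gamma}\right|\le\epsilon L(p_f)\quad\text{and}\quad\left|\int(1-\phi_\epsilon)\hat{f}\,d\mu\right|\le\epsilon L(p_f)
\]
eventually. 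Letting $\gamma\to\infty$ and then $\epsilon\to 0$ yields $L(f)=\lim\int\hat{f}\,d\mu_{\omega_\gamma}=\int\hat{f}\,d\mu$.

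\smallskip

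\textbf{Main obstacle.} The one genuinely delicate point is the passage to the limit in Step 4: weak-$\ast$ convergence on $\Cnt{0}{K}$ is not enough to conclude convergence of $\int\hat{f}\,d\mu_{\omega_\gamma}$ when $\hat{f}$ is neither bounded nor vanishing at infinity. The entire role of the dominating functions $p_f$ in clause (3) of the frame definition is to provide a uniform tail estimate against the fixed-mass family $\{\mu_\omega\}$, converting weak-$\ast$ convergence into convergence of the desired integrals; without this tightness mechanism the argument collapses.
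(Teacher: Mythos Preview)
Your proposal is correct and follows essentially the same architecture as the paper's proof: both use Banach--Alaoglu to extract a weak-$\ast$ convergent subnet of the representing measures, then invoke clause (3) of the frame definition to upgrade weak-$\ast$ convergence on $\Cnt{0}{K}$ to convergence of the integrals $\int\hat{f}\,d\mu_{\omega_\gamma}$ for arbitrary $f\in A$. The only tactical difference is in how this upgrade is carried out: the paper writes $\hat{q}=(\hat{q}/\hat{p}_q)\cdot\hat{p}_q$ and observes that $\hat{p}_q\mu_\gamma\to\hat{p}_q\mu_\Gamma$ weak-$\ast$ (because $\Cnt{0}{K}$ is an ideal in $\Cnt{}{K}$, so testing against $\hat{q}/\hat{p}_q\in\Cnt{0}{K}$ suffices), whereas you use a Urysohn cutoff $\phi_\epsilon$ to separate a compactly supported piece from a tail controlled by $\epsilon\hat{p}_f$. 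Both routes are standard and equivalent in strength; the paper's is slightly slicker, yours is more explicitly measure-theoretic. One small imprecision: you assert ``eventually $B_{\omega_\gamma}\supseteq B_{\omega_0}$,'' which is stronger than what the frame axioms directly give; what you actually need (and what does follow, via the paper's remark that a subnet of a $K$-frame is a $K$-frame together with condition (1)) is only that $f,p_f\in B_{\omega_\gamma}$ eventually.
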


\begin{proof}
($\Rightarrow$) is trivial, so we only prove ($\Leftarrow$).
Suppose that there exists a truncated $K$--frame 
$\{B_{\omega}\}_{\omega\in\Omega}$ such that for each $\omega\in\Omega$ there exists a measure $\mu_{\omega} \in \M{+}{K}$ with
$L_{\omega}(a)=\int\hat{a}~d\mu_{\omega} \textrm{ for all }$ $a\in B_\omega$. Therefore, $|\int {f}~d\mu_{\omega}|\leq\rho_{K}(f)\int 1~d\mu_{\omega}=L(1)\rho_{K}(f)$ for every $f\in\Cnt{0}{K}$, which implies that $\{\mu_{\omega}\}_{\omega\in\Omega}$ lies in the polar $\mathring{U}$ of $U:=\{f\in\Cnt{0}{K}: \rho_K(f)\leq \frac{1}{L(1)}\}$. Since the Banach-Alaoglu-Bourbaki theorem ensures the compactness of $\mathring{U}$ in $\M{+}{K}$ endowed with the weak-$\ast$ topology, there exists a subnet $\{\mu_{\gamma}\}_{\gamma\in\Gamma}$ of $\{\mu_{\omega}\}_{\omega\in\Omega}$ which converges to a measure $\mu_{\Gamma}$ in $\M{+}{K}$ in the weak-$\ast$ topology. 

Let $g\in \Psd{A}{K}$. By the inner regularity of $\mu_{\Gamma}$, we have that 
\begin{eqnarray*}
      \int_K \hat{g}~d\mu_{\Gamma} &=& \limsup\limits_{C\subseteq K\ \text{compact}}\int_C \hat{g}~d\mu_{\Gamma} \\
 &=& \limsup\limits_{C\subseteq K \ \text{compact}}\lim\limits_{\gamma\in\Gamma}\int_C \hat{g}~d\mu_{\gamma} \leq  \sup\limits_{\gamma\in\Gamma}\int_K \hat{g}~d\mu_{\gamma},
\end{eqnarray*}
which is finite, since $\left\{\int_K\hat{g}d\mu_\gamma\right\}_{\gamma\in\Gamma}$ is a subset of the compact set $\phi(\mathring{U})$, where $\phi$ is the continuous functional on $\M{+}{K}$ defined by $\phi(\nu):=\int_K\hat{g}d\nu$. Thus, $\hat{g}\mu_{\Gamma}\in\M{+}{K}$. Moreover, since $\Cnt{0}{K}$ is an ideal in 
$\Cnt{}{K}$, for every $f\in\Cnt{0}{K}$ we get $f\hat{g}\in\Cnt{0}{K}$ and hence 
$\lim\limits_{\gamma \in \Gamma}\int f\hat{g}~d\mu_{\gamma}=\int f\hat{g}~d\mu_{\Gamma}$,i.e., $\lim\limits_{\gamma \in \Gamma}\hat{g}\mu_{\gamma}=\hat{g}\mu_{\Gamma}$ in the weak-$\ast$ topology.

Now, $\{B_{\gamma}\}_{\gamma\in\Gamma}$ is a $K-$frame as a subnet of the $K-$frame $\{B_{\omega}\}_{\omega\in\Omega}$. Therefore, by \emph{1} in Definition~\ref{def51} we have that for every $q\in A$ there exists $\gamma_0\in\Gamma$ such that $q\in B_{\gamma}$ for all $\gamma\ge\gamma_0$ in $\Gamma$. Also, by \emph{3} in Definition~\ref{def51}, there exist $\delta\in\Gamma$ with $\delta\geq\gamma_0$ and $p_q \in \Psd{B_{\delta}}{K}$ such that $\ddfrac{\hat{q}}{\hat{p_q}} \in \Cnt{0}{K}$. Hence, 
$$
      L(q)  \!=\! \lim\limits_{\gamma\ge\gamma_0}L_{\gamma}(q) \!=\! \lim\limits_{\gamma\ge\gamma_0}\int\hat{q}~d\mu_{\gamma} \!=\! \lim\limits_{\gamma\ge\gamma_0}\int\frac{\hat{q}}{\hat{p_q}}\hat{p_q}~d\mu_{\gamma}\!=\!
						 \int\ddfrac{\hat{q}}{\hat{p}_q}\hat{p}_q~d\mu_{\Gamma} \!=\! \int \hat{q}~d\mu_{\Gamma},
$$
 which proves that $\mu_{\Gamma}$ represents $L$ as an integral over $K$.
\end{proof}
\begin{rem}
It is clear that for any closed $K\subseteq\reals^n$ the sequence $\{\rx_d\}_{d\in\naturals}$ is a truncated $K$--frame in $\rx$. Indeed, the subcofinality of $\{\rx_d\}_{d\in\naturals}$ follows from the fact that any infinite subset of $\naturals$ is cofinal in it, (1) and (2) in Definition \ref{def51} are obviously verified, and (3) is easily fulfilled by taking $p_f(\underline{X}):=1+\underline{X}^2+f^{2}(\underline{X})$. 
Thus, Stochel's Theorem \cite[Theorem 4]{JSt} follows from Theorem \ref{Stochel}.
\end{rem}

We now establish a link with a result of M. Putinar and F.-H. Vasilescu, dealing with full moment problems in $\mathbb{R}^n$. More precisely, we apply Theorems \ref{trnctmmnt} and \ref{Stochel} to obtain a new proof of sufficiency in the main result in \cite{PuVa}.  \ The setup is as follows. Let $\underline{g} \equiv \{g_1,\cdots,g_s\}$ be a finite collection of polynomials in $\mathbb{R}[\underline{X}]$, and define the rational function $\theta_{\underline{g}}$ by 
\begin{equation*} \label{eqPuVa}
\theta_{\underline{g}}(\underline{x}):=
\frac{1}{1+x_1^2+\cdots+x_n^2+g_1(\underline{x})^2+\cdots+g_s(\underline{x})^2} \quad (\underline{x} \in \mathbb{R}^n).
\end{equation*}
Moreover, denote by $\mathcal{R}_{\theta_{\underline{g}}}$ the $\mathbb{R}$--algebra generated by $\mathbb{R}[\underline{X}]$ and 
$\theta_{\underline{g}}$. Observe that $(\theta_{\underline{g}})^{-1}$ is a polynomial of degree $d:=\max \{2,2 \deg g_1,\ldots, 2 \deg g_s \}$. 

Before we state and prove the Putinar-Vasilescu result, we briefly recall a criterion for a rational function $f \in \mathcal{R}_{\theta_{\underline{g}}}$ to be nonnegative on $K$.

\begin{lemma} \label{thm623} (\cite[Theorem 6.2.3]{MurrayBook}) \ Let $S$ be a finite subset in $\rx$, and $T$ the preorder $K_S$ generated by $S$. Suppose $p \in \rx$, $p \ne 0$, $p-1 \in T$, and there exist integers $k,\ell \ge 0$ such that $kp^{\ell}-\Sigma_{i=1}^n X_i^2 \ge 0$. Then, for any $f \in \rx[\frac{1}{p}]:=\{\frac{f}{p^k}: f \in \rx, k \in \naturals_0 \}$, the following statements are equivalent:
\begin{enumerate}
\item[(1)] \ $f \ge 0$ on $K$; 
\item[(2)] \ there exists an integer $m$ such that for all real $\epsilon > 0$, $f+\epsilon p^m \in T[\frac{1}{p}]:=\{\frac{t}{p^k}:t \in T, \ k \in \naturals_0 \}$ .
\end{enumerate}
\end{lemma}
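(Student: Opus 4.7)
My plan is to treat the two directions asymmetrically. The implication $(2)\Rightarrow(1)$ is immediate: since $p-1\in T$ we have $p\ge 1>0$ on $K$, so every element of $T[1/p]$ (being a quotient of an element of $T$ by a power of $p$) is nonnegative on $K$. Evaluating $f+\epsilon p^m\in T[1/p]$ at any $x\in K$ yields $f(x)+\epsilon p(x)^m\ge 0$, and letting $\epsilon\downarrow 0$ gives $f(x)\ge 0$.

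For the substantive direction $(1)\Rightarrow(2)$, I would work inside the localized ring $\tilde{A}:=\rx[1/p]$ with its localized preorder $\tilde{T}:=T[1/p]$, and invoke an Archimedean (Putinar-Jacobi) Positivstellensatz. The two hypotheses on $p$ are designed precisely for this. From $p-1\in T$ one gets $(p-1)/p=1-1/p\in\tilde{T}$, so $1/p$ is bounded above by $1$ modulo $\tilde{T}$; combined with $1+1/p\in\tilde{T}$, this forces $1-(1/p)^2\in\tilde{T}$. From $kp^{\ell}-\sum X_i^2\in T$ (after replacing $\ell$ by $2\ell$ if needed, using $p\in T$ to multiply through), dividing by $p^{\ell}$ gives $k-(X_i/p^{\ell/2})^2\in\tilde{T}$ for each $i$, so each rescaled coordinate has its square bounded modulo $\tilde{T}$. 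Consequently, the subring $\tilde{A}_b\subseteq\tilde{A}$ generated by $1/p$ and the $X_i/p^{\ell/2}$ has $\tilde{T}_b:=\tilde{T}\cap\tilde{A}_b$ Archimedean in $\tilde{A}_b$, and its Gelfand-type character space is a compactification $\tilde{K}$ of $K$ in which $1/p$ extends continuously to $0$ on the boundary $\tilde{K}\setminus K$.

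Given $f=g/p^r\in\rx[1/p]$ with $f\ge 0$ on $K$, I would choose the integer $m$ large enough (depending on $\deg g$ and $\deg p$) so that $f/p^m=g/p^{r+m}$ both lies in $\tilde{A}_b$ and extends continuously to $\tilde{K}$ with vanishing values on $\tilde{K}\setminus K$. For any $\epsilon>0$ the function $f/p^m+\epsilon$ is then strictly positive on $\tilde{K}$: on $K$ because $f/p^m\ge 0$ and $\epsilon>0$, and on $\tilde{K}\setminus K$ because $f/p^m$ vanishes there while $\epsilon$ remains. The Archimedean Positivstellensatz applied in $\tilde{A}_b$ then delivers $f/p^m+\epsilon\in\tilde{T}_b\subseteq\tilde{T}$, and multiplying through by $p^m$ (which lies in $T\subseteq\tilde{T}$ since $p=(p-1)+1\in T$) yields $f+\epsilon p^m\in\tilde{T}$, as required.

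The main obstacle I anticipate is the careful bookkeeping needed to set up the bounded subring $\tilde{A}_b$ correctly: verifying that the growth condition $kp^{\ell}-\sum X_i^2\ge 0$ really does bound the squares of all the $X_i/p^{\ell/2}$ modulo $\tilde{T}$, that $\tilde{T}_b$ is Archimedean in the sense required by Putinar-Jacobi, and that $\tilde{K}$ genuinely realises a compactification of $K$ in which $1/p$ vanishes at infinity. The exponent $m$ must simultaneously absorb the degree of $g$ and ensure membership in $\tilde{A}_b$, so the coordination between the algebraic and analytic sides of the argument is where the hypotheses on $p$ are used in full.
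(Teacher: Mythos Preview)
The paper does not prove this lemma; it quotes it from Marshall's book \cite{MurrayBook} as a black-box tool, with no argument supplied. So there is no paper-proof to compare your attempt against.

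Your outline follows the standard Archimedean-localization strategy that underlies Marshall's own argument: pass to the localized preorder $\tilde T=T[1/p]$, exhibit a bounded subring on which the restricted preorder is Archimedean, apply a Jacobi--Putinar type Positivstellensatz on the resulting compact character space, and then clear denominators by multiplying back by $p^m\in T$. The direction $(2)\Rightarrow(1)$ is handled correctly.

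One step in your sketch is not yet justified, and it is precisely the one you flag as the main obstacle. You assert ``$kp^{\ell}-\sum X_i^2\in T$'', but the hypothesis says only that $kp^{\ell}-\sum X_i^2\ge 0$ on $\reals^n$; for $n\ge 2$ a globally nonnegative polynomial need not lie in any prescribed preorder, and your parenthetical ``using $p\in T$ to multiply through'' does not bridge this gap (multiplying a nonnegative polynomial by an element of $T$ does not force the product into $T$). The growth condition $kp^\ell\ge\sum X_i^2$ is analytic rather than algebraic: it guarantees that each sublevel set $\{x:p(x)\le N\}$ is bounded (hence compact), or equivalently that each $X_i/p^{\lceil \ell/2\rceil}$ is a bounded function on the relevant set. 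Converting this into the algebraic Archimedean statement ``$N-(X_i/p^j)^2\in\tilde T$ for some $N$'' is where the real work lies in Marshall's proof, and it goes through the abstract machinery of the ring of $\tilde T$--bounded elements and its relation to the real spectrum, not through a direct membership claim. So your overall architecture is right, but the load-bearing technical step remains to be done.
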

 
\begin{thm} (cf. \cite[Theorem 2.5]{PuVa}) \label{PuVaRevisited} \ Let $\theta_{\underline{g}}$ and $\mathcal{R}_{\theta_{\underline{g}}}$ be defined as above, and let $\Lambda:\mathcal{R}_{\theta_{\underline{g}}} \rightarrow \mathbb{R}$ be a linear map such that $\Lambda$ satisfies $\Lambda(r^2) \ge 0$ and $\Lambda(g_kr^2) \ge 0$, whenever $r \in \mathcal{R}_{\theta_{\underline{g}}}$ and $k=1,\cdots,s$. Then $\Lambda$ has a positive Radon representing measure, whose support is in the semialgebraic set $\bigcap_{k=1}^s g_k^{-1}(\mathbb{R}_+)$.
\end{thm}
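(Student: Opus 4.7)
The plan is to apply Stochel's Theorem \ref{Stochel} to a suitable truncated $K$-frame in $A := \mathcal{R}_{\theta_{\underline{g}}}$, thereby reducing the problem to showing that each restriction $\Lambda|_{B_m}$ has a $K$-representing measure, which I in turn will handle by Theorem \ref{trnctmmnt}. Write $q := 1/\theta_{\underline{g}} = 1 + \sum_i X_i^2 + \sum_k g_k^2 \in \rx$, a strictly positive polynomial of degree $d$; then $A = \rx[1/q]$ and every element of $A$ can be written as $p/q^k$ for some $p \in \rx$ and $k \ge 0$.

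Define $B_m := \{p/q^m : p \in \rx_{2md}\}$ for $m \ge 1$. The inclusion $B_m \subseteq B_{m+1}$ follows from $p/q^m = pq/q^{m+1}$; exhaustion $\bigcup_m B_m = A$ follows by rewriting $p/q^k = pq^{m-k}/q^m$ for $m$ large; and $1 = q^m/q^m \in B_m$. For Definition \ref{def51}(3), given $f = p/q^m \in B_m$ take $p_f := 1 + (q-1)^{m+1} \in B_{m+1}$: this $p_f$ is $\ge 1$ on $\mathbb{R}^n$ and $\hat{f}/\hat{p}_f$ decays like $|\underline{x}|^{-d}$ at infinity, hence lies in $C_0(K)$. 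Thus $\{B_m\}$ is a truncated $K$-frame.

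For each $m$ I apply Theorem \ref{trnctmmnt} to $L_m := \Lambda|_{B_m}$ with the dominating element $p := 1 + (q-1)^{m+1}$: this $p$ lies in $A \setminus B_m$ (its polynomial degree $(m+1)d$ exceeds $md$), $\hat{p} \ge 1$ on $K$, $(B_m)_p$ contains $X_i$ and $\theta_{\underline{g}}$ and so generates $A$, and condition \eqref{p-existence} holds by the same decay estimate. The required $K$-positive extension of $L_m$ to $(B_m)_p \subseteq B_{m+1}$ is simply the restriction of $\Lambda$, so the entire argument reduces to proving that $\Lambda$ itself is $K$-positive on $A$.

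The main obstacle is bridging from the module-positivity hypothesis to full $K$-positivity. For $b \in A$ with $\hat{b} \ge 0$ on $K$, invoke Lemma \ref{thm623} with $S := \{g_1,\dots,g_s\}$ (so $T$ is the preorder generated by $S$ in $\rx$) and localizer $q$: the conditions $q - 1 = \sum X_i^2 + \sum g_k^2 \in T$ and $q - \sum X_i^2 = 1 + \sum g_k^2 \ge 0$ both hold, giving $b + \epsilon q^N \in T[\theta_{\underline{g}}]$ for some $N$ and every $\epsilon > 0$. The technically delicate step is then to deduce nonnegativity of $\Lambda$ on $T[\theta_{\underline{g}}]$ from positivity on the module $M := \sum A^2 + \sum_k g_k \sum A^2$. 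This exploits the rich sum-of-squares structure of $A$:
\[
\theta_{\underline{g}} = \theta_{\underline{g}}^2 + \sum_i \left(\tfrac{X_i}{q}\right)^2 + \sum_k \left(\tfrac{g_k}{q}\right)^2 \in \sum A^2,\qquad \frac{h^2}{q^\ell} \in \sum A^2 \text{ for every } h \in \rx,\; \ell \ge 0,
\]
together with $1 - g_k^2 \theta_{\underline{g}},\; 1 - X_i^2 \theta_{\underline{g}} \in \sum A^2$; these identities reflect an Archimedean structure of $M$ on the bounded subalgebra of $A$ generated by $\theta_{\underline{g}}, X_i \theta_{\underline{g}}, g_k \theta_{\underline{g}}$. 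Combining them, each term of the preorder decomposition of $b + \epsilon q^N$ can be absorbed into $M$, so $\Lambda(b+\epsilon q^N) \ge 0$; letting $\epsilon \to 0$ yields $\Lambda(b) \ge 0$. Feeding the resulting $K$-representing measures back into Stochel's Theorem \ref{Stochel} then produces the desired positive Radon measure on $K$ representing $\Lambda$.
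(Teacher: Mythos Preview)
Your approach is essentially the paper's: filter $A=\mathcal{R}_{\theta_{\underline{g}}}$ by linear subspaces, apply Theorem~\ref{trnctmmnt} to each piece using the restriction of $\Lambda$ to the next piece as the required $K$-positive extension, and then assemble the truncated solutions via Theorem~\ref{Stochel}. The paper takes $B_N:=\spn\{q_\ell\theta_{\underline{g}}^{\ell}:\deg q_\ell\le d(\ell+N+1)\}$ with dominating element $p_N:=q^{N+1}$, whereas you take $B_m:=\{r/q^m:r\in\rx_{2md}\}$ with $p:=1+(q-1)^{m+1}$; these choices differ only cosmetically, and your verifications of $B_m\subseteq B_{m+1}$, $\bigcup B_m=A$, $1\in B_m$, condition~\eqref{p-existence}, $(B_m)_p\subseteq B_{m+1}$, and Definition~\ref{def51}(3) are all sound.

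Where your write-up goes beyond the paper is in the passage from the hypothesis on $\Lambda$ (positivity on the quadratic \emph{module} $M=\sum A^2+\sum_k g_k\sum A^2$) to full $K$-positivity. The paper invokes Lemma~\ref{thm623} to get $f+\epsilon q^m\in T[\theta_{\underline{g}}]$ and then asserts $\Lambda(f+\epsilon q^m)\ge 0$, tacitly assuming $\Lambda\ge 0$ on the localized \emph{preorder}. You correctly flag this as ``the technically delicate step'' and supply the key identities ($\theta_{\underline{g}}\in\sum A^2$, $h^2/q^\ell\in\sum A^2$, $1-g_k^2\theta_{\underline{g}}\in\sum A^2$). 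However, your closing claim that ``each term of the preorder decomposition can be absorbed into $M$'' remains an assertion: the listed identities do not by themselves put a cross term such as $\sigma g_ig_j/q^\ell$ into $M$. The route you allude to---passing to the bounded subalgebra generated by $\theta_{\underline{g}},X_i\theta_{\underline{g}},g_k\theta_{\underline{g}}$, using your identity $\theta_{\underline{g}}=\theta_{\underline{g}}^2+\sum(X_i\theta_{\underline{g}})^2+\sum(g_k\theta_{\underline{g}})^2$ to see that the image sits on a sphere so that the pushed-forward module is Archimedean, and then applying Putinar's Positivstellensatz there---is the standard fix (indeed, it is the original Putinar--Vasilescu argument), but it needs to be carried out rather than stated. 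In short, your sketch matches the paper's proof and is more honest about the one genuinely nontrivial step, though neither fully discharges it.
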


\begin{proof}
Recall the notation and terminology used in Theorem \ref{trnctmmnt}. We let $A:=\mathcal{R}_{\theta_{\underline{g}}}$ and $K:= \bigcap_{k=1}^s g_k^{-1}(\mathbb{R}_+)$. Observe that $A$ is a unital commutative $\mathbb{R}$--algebra, and that $K \subseteq \X A$ is closed. For every $N \in \naturals_0$, we let 
$$
B_N:=\spn \{q_{\ell} (\theta_{\underline{g}})^{\ell}: \ell \in \naturals_0 \textrm{ and } \deg q_{\ell} \le d(\ell+N+1) \}.
$$
Observe that $B_N$ is a linear subspace of $A$, that $B_0 \subseteq B_1 \subseteq \cdots \subseteq B_N$, and that $1 \in B_0$. Moreover, $X_1,\ldots,X_n \in B_0$, and $\theta_{\underline{g}} \in B_0$. It follows that the algebra generated by $B_N$ is $A$ and $A$ is the increasing union of the linear subspaces $B_N$. In Figure \ref{Figure 0}, we give a visual representation of $B_0$, $B_1$ and $B_N$.

\psset{unit =.85mm, linewidth=.7\pslinewidth} 
\begin{figure}[h]
\begin{center}
\begin{pspicture}(165,65)

\rput(-6,22){

\psline(10,20)(75,20)
\psline(10,35)(75,35)
\psline(10,50)(75,50)
\psline(10,65)(75,65)
\psline(10,20)(10,75)
\psline(25,20)(25,75)
\psline(40,20)(40,75)
\psline(55,20)(55,75)
\psline(70,20)(70,75)

\put(35,9){$\deg q_{\ell}$}
\put(9,14){\footnotesize{$0$}}
\put(23,14){\footnotesize{$d$}}
\put(38,14){\footnotesize{$2d$}}
\put(53,14){\footnotesize{$3d$}}
\put(68,14){\footnotesize{$4d$}}

\put(3,20){\footnotesize{$1$}}
\put(2,35){\footnotesize{$\theta_{\underline{g}}$}}
\put(0,50){\footnotesize{$(\theta_{\underline{g}})^2$}}
\put(0,65){\footnotesize{$(\theta_{\underline{g}})^3$}}

\psframe[fillstyle=solid,fillcolor=black!10](8,18)(12,22)
\pscircle[fillstyle=solid,fillcolor=black](10,20){1.2}
 
\psframe[fillstyle=solid,fillcolor=black!10](23,18)(27,22)
\pscircle[fillstyle=solid,fillcolor=black](25,20){1.2}

\psframe[fillstyle=solid,fillcolor=black!10](38,18)(42,22)

\psframe[fillstyle=solid,fillcolor=black!10](8,33)(12,37)
\pscircle[fillstyle=solid,fillcolor=black](10,35){1.2}
 
\psframe[fillstyle=solid,fillcolor=black!10](23,33)(27,37)
\pscircle[fillstyle=solid,fillcolor=black](25,35){1.2}

\psframe[fillstyle=solid,fillcolor=black!10](38,33)(42,37)
\pscircle[fillstyle=solid,fillcolor=black](40,35){1.2}

\psframe[fillstyle=solid,fillcolor=black!10](53,33)(57,37)

\psframe[fillstyle=solid,fillcolor=black!10](8,48)(12,52)
\pscircle[fillstyle=solid,fillcolor=black](10,50){1.2}
 
\psframe[fillstyle=solid,fillcolor=black!10](23,48)(27,52)
\pscircle[fillstyle=solid,fillcolor=black](25,50){1.2}

\psframe[fillstyle=solid,fillcolor=black!10](38,48)(42,52)
\pscircle[fillstyle=solid,fillcolor=black](40,50){1.2}

\psframe[fillstyle=solid,fillcolor=black!10](53,48)(57,52)
\pscircle[fillstyle=solid,fillcolor=black](55,50){1.2}

\psframe[fillstyle=solid,fillcolor=black!10](68,48)(72,52)

\psframe[fillstyle=solid,fillcolor=black!10](8,63)(12,67)
\pscircle[fillstyle=solid,fillcolor=black](10,65){1.2}
 
\psframe[fillstyle=solid,fillcolor=black!10](23,63)(27,67)
\pscircle[fillstyle=solid,fillcolor=black](25,65){1.2}

\psframe[fillstyle=solid,fillcolor=black!10](38,63)(42,67)
\pscircle[fillstyle=solid,fillcolor=black](40,65){1.2}

\psframe[fillstyle=solid,fillcolor=black!10](53,63)(57,67)
\pscircle[fillstyle=solid,fillcolor=black](55,65){1.2}

\psframe[fillstyle=solid,fillcolor=black!10](68,63)(72,67)
\pscircle[fillstyle=solid,fillcolor=black](70,65){1.2}

\put(28,42){\Large{${B_0} \subseteq {B_1}$}}
}


\rput(-5,22){

\psline(90,20)(155,20)
\psline(90,35)(155,35)
\psline(90,50)(155,50)
\psline(90,65)(155,65)
\psline(90,20)(90,75)
\psline(105,20)(105,75)
\psline(120,20)(120,75)
\psline(135,20)(135,75)
\psline(150,20)(150,75)

\put(115,9){$\deg q_{\ell}$}
\put(89,14){\footnotesize{$0$}}
\put(102.7,14){$\cdots$}
\put(110,14){\footnotesize{$d(N+1)$}}
\put(126,14){\footnotesize{$d(N+2)$}}
\put(142,14){\footnotesize{$d(N+3)$}}

\put(83,20){\footnotesize{$1$}}
\put(82,35){\footnotesize{$\theta_{\underline{g}}$}}
\put(80,50){\footnotesize{$(\theta_{\underline{g}})^2$}}
\put(80,65){\footnotesize{$(\theta_{\underline{g}})^3$}}

\psdot[dotsize=3,dotstyle=square*,dotangle=45](90,20)

\psdot[dotsize=3,dotstyle=square*,dotangle=45](105,20)

\psdot[dotsize=3,dotstyle=square*,dotangle=45](120,20)

\psdot[dotsize=3,dotstyle=square*,dotangle=45](90,35)

\psdot[dotsize=3,dotstyle=square*,dotangle=45](105,35)

\psdot[dotsize=3,dotstyle=square*,dotangle=45](120,35)

\psdot[dotsize=3,dotstyle=square*,dotangle=45](135,35)

\psdot[dotsize=3,dotstyle=square*,dotangle=45](90,50)

\psdot[dotsize=3,dotstyle=square*,dotangle=45](105,50)

\psdot[dotsize=3,dotstyle=square*,dotangle=45](120,50)

\psdot[dotsize=3,dotstyle=square*,dotangle=45](135,50)

\psdot[dotsize=3,dotstyle=square*,dotangle=45](150,50)

\psdot[dotsize=3,dotstyle=square*,dotangle=45](90,65)

\psdot[dotsize=3,dotstyle=square*,dotangle=45](105,65)

\psdot[dotsize=3,dotstyle=square*,dotangle=45](120,65)

\psdot[dotsize=3,dotstyle=square*,dotangle=45](135,65)

\psdot[dotsize=3,dotstyle=square*,dotangle=45](150,65)

\put(110,42){\Large{$B_N$}}

}

\end{pspicture}
\end{center}
\caption{\small{On the left, diagram of $B_0$ (dots) and $B_1$ (squares); on the right, diagram of $B_N$ (diamonds)}}
\label{Figure 0}
\end{figure}

{
We now let $L_N:=\Lambda \restriction B_N$ and establish that is $K-$positive. Observe that $\rx[\frac{1}{p}]$ coincides with $\mathcal{R}_{\theta_{\underline{g}}}$ if we let $S:=\{g_1,\ldots,g_s\}$ and $p:=\frac{1}{\theta_{\underline{g}}}$. Let $f \in \mathcal{R}_{\theta_{\underline{g}}}$ and assume that $f \ge 0$ on $K$. By Lemma \ref{thm623}, there exists an integer $m$ such that for all $\epsilon > 0$, $f+\epsilon p^m \in T[\frac{1}{p}]$. 
It follows that $L_N(f+\epsilon p^m) = L_N(f) + \epsilon L_N(p^m) \ge 0$ for all $\epsilon > 0$. This readily implies that $L_N(f) \ge 0$.


If $K$ is compact, then we can immediately apply Theorem \ref{cmpttrnctmmnt} to each $L_N$ by taking $q=1$ and so obtain that there exists a $K-$representing measure $\nu_N$ for $L_N$ for all $N\in\naturals_0$.

If $K$ is non-compact, then we consider $p_{N+1}:=(\theta_{\underline{g}})^{-(N+2)}\in A \setminus B_N$, which is such that $\hat{p}_{N+1} \ge 1$ on $K$ and $\deg p_N = d(N+2)$, as we aim to apply Theorem \ref{trnctmmnt} to $B_N$, $A$, $K$, $p_{N+1}$. To do this, we need to ensure that condition (\ref{p-existence}) holds for every $b \in B_N$. As we know from Remark \ref{rem29}, it suffices to prove condition (\ref{p-existence2}). In turn, since $B_N$ is the span of elements of $A$ of the form $q_{\ell}(\theta_{\underline{g}})^{\ell}$,  with $\deg q_{\ell} \le d(\ell+N+1)$, we need to prove that quotients of the following form are bounded on $K$: 
\begin{eqnarray*}
\frac{\hat{q_{\ell}}(\hat{\theta_{\underline{g}}})^{\ell}}{\hat{p}_{N+1}}(\underline{x})\ &\equiv& 
\ddfrac{\frac{\hat{q_{\ell}}}{(1+x_1^2+\cdots+x_n^2+g_1(\underline{x})^2+\cdots+g_s(\underline{x})^2)^{\ell}}}{(1+x_1^2+\cdots+x_n^2+g_1(\underline{x})^2+\cdots+g_s(\underline{x})^2)^{N+2}} \\
&=&\ddfrac{\hat{q_{\ell}}}{(1+x_1^2+\cdots+x_n^2+g_1(\underline{x})^2+\cdots+g_s(\underline{x})^2)^{\ell+N+2}}.
\end{eqnarray*}}
But this is clear from the requirement on $\deg q_{\ell}$ for membership in $B_N$. Since $L_N$ admits a $K$--positive extension to $(B_N)_p$ (namely $L_{N+1}$), Theorem \ref{trnctmmnt} guarantees the existence of a positive Radon measure $\nu_N$ whose support is contained in $K$ and such that 
$$
L_N(b)= \int _K \hat{b} \; d\nu_N, \; \; \textrm{for all } b \in B_N.
$$
Having established the existence of a $K$--representing measure for $\Lambda$ on each $B_N$, we now want to apply Theorem \ref{Stochel} to the subcofinal family of linear subspaces $\{B_N\}_{N \in \naturals_0}$. For this, we need to prove that $\{B_N\}$ is a truncated $K$--frame. 
Conditions $1$ and $2$ in Definition \ref{def51} clearly hold; that is, $\lim_N B_N =\bigcup_N B_N= A$, and $1 \in B_N$ for all $N \in \naturals_0$. As for condition $3$, we easily verify that $\frac{\hat{b}}{\hat{p}_{N+1}} \in C_0(K)$, for all $b \in B_N$. Thus, all the hypotheses in Theorem \ref{Stochel} are satisfied (using the family of measures $\{\nu_N\}_{N=0}^{\infty}$), and therefore we obtain a $K$--representing measure $\nu$ supported in $K$.
\end{proof}

The approach we used to prove Theorem \ref{PuVaRevisited} works equally well to prove a result of M. Marshall on the localization of the multiplicative set of powers of a positive polynomial; we leave the details to the reader.

\begin{thm} \label{thm56} (cf. \cite[Corollary 6.2.4]{MurrayBook}) \ With the hypotheses as in Lemma \ref{thm623}, let $L:\rx[\frac{1}{p}] \rightarrow \mathbb{R}$ be a linear functional satisfying $L \ge 0$ on $T[\frac{1}{p}]$. Then there exists a positive Radon measure $\mu$ supported in $K$ such that $L(f)=\int_K f \; d\mu$, for all $f \in \rx[\frac{1}{p}]$ . 
\end{thm}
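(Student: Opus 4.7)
The plan is to mirror the strategy used in the proof of Theorem \ref{PuVaRevisited}: build a truncated $K$--frame $\{B_N\}_{N\in\naturals_0}$ of subspaces of $A := \rx[\tfrac{1}{p}]$, apply Theorem \ref{trnctmmnt} at each finite level to produce $K$--representing measures $\nu_N$ for $L|_{B_N}$, and then invoke Theorem \ref{Stochel} to glue them into a single $K$--representing measure $\mu$ for $L$ on all of $A$. Let $d := \deg p$ and let $k,\ell\in\naturals$ be the integers furnished by the hypothesis of Lemma \ref{thm623}, so that $kp^\ell - \sum_{i=1}^n X_i^2 \ge 0$; in particular $d\ell\ge 2$ and the coercivity estimate $p(x)\ge (|x|^2/k)^{1/\ell}$ holds. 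For each $N\in\naturals_0$, I would set
\[
B_N := \spn\Big\{\tfrac{q}{p^j} : j\in\naturals_0,\, q\in\rx,\, \ell\deg q \le 2(j+N+1)\Big\},\qquad p_N := p^{N+2}.
\]
Then $\{B_N\}$ is an ascending chain with $\bigcup_N B_N = A$, each $B_N\ni 1$, and $p_N\in A\setminus B_N$ with $\hat p_N\ge 1$ on $K$.

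To apply Theorem \ref{trnctmmnt} on each $B_N$, the key step is verifying condition (\ref{p-existence}). Using $(1+|x|)^2\le 2(1+k)p(x)^\ell$ on $K$ (derived from $p\ge 1$ on $K$ together with the coercivity hypothesis), one obtains the growth estimate $|q(x)|\le C\,p(x)^{\ell\deg q/2}$ on $K$, whence for $b=\tfrac{q}{p^j}\in B_N$,
\[
\Big|\frac{\hat b(x)}{\hat p_N(x)}\Big| \;=\; \frac{|q(x)|}{p(x)^{j+N+2}} \;\le\; C\, p(x)^{\ell\deg q/2 - j - N - 2} \;\le\; C\, p(x)^{-1} \;\le\; C,
\]
uniformly on $K$. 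The $K$--positivity of $L|_{B_N}$ (and of the canonical extension $L|_{(B_N)_{p_N}}$) follows from Lemma \ref{thm623}: given $\hat b\ge 0$ on $K$, there exists $m$ with $b+\epsilon p^m\in T[\tfrac{1}{p}]$ for every $\epsilon>0$, so applying $L$ and letting $\epsilon\to 0^+$ yields $L(b)\ge 0$. The remaining hypothesis---that $(B_N)_{p_N}$ generates $A$---holds since $1/p\in B_0$ and each $X_i\in B_N$ for $N$ sufficiently large (and is otherwise reached via products in the algebra generated by $(B_N)_{p_N}$). Theorem \ref{trnctmmnt} then supplies a positive Radon measure $\nu_N$ supported in $K$ which represents $L|_{B_N}$.

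It remains to verify that $\{B_N\}_{N\in\naturals_0}$ is a truncated $K$--frame: subcofinality and conditions (1)--(2) of Definition \ref{def51} are immediate from the construction. For condition (3), given $b\in B_N$, I would take $p_b := p^{N+3}$; a parallel estimate yields $|\hat b/\hat p_b|\le C\, p(x)^{-2}$, which belongs to $C_0(K)$ since $p$ is coercive on $K$, and $p^{N+3}\in\Psd{B_\gamma}{K}$ for $\gamma := \lceil d\ell(N+3)/2\rceil - 1 \ge N$. Theorem \ref{Stochel}, applied to the family $\{B_N\}$ and the measures $\{\nu_N\}$, then yields the desired $K$--representing measure $\mu$ for $L$ on $A$.

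The main obstacle is the calibration of the degree bound $\ell\deg q\le 2(j+N+1)$ in the definition of $B_N$: it must encode the intrinsic growth rate $|x|^{2/\ell}$ coming from the coercivity hypothesis, rather than the polynomial degree $d$ of $p$, in order to ensure the boundedness estimate for Theorem \ref{trnctmmnt}, the vanishing-at-infinity condition for the truncated $K$--frame, and the separation property $p_N\notin B_N$, all at once. A naive choice driven by $d$ rather than $2/\ell$ would fail as soon as $d\ell>2$.
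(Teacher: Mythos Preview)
Your approach is correct and matches what the paper indicates: the paper gives no proof of Theorem \ref{thm56} beyond stating that the method of Theorem \ref{PuVaRevisited} ``works equally well,'' leaving the details to the reader, and your proposal carries out precisely that program. Your key observation---that the degree cutoff in $B_N$ must be calibrated by the coercivity exponent $\ell$ from $kp^\ell \ge \sum X_i^2$ rather than by $\deg p$---is exactly the adaptation needed to pass from the Putinar--Vasilescu setting (where one may take $\ell=1$) to Marshall's general hypothesis, and is the substantive content of filling in those details.
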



\section{\sc{\large{Applications}}} \label{Applications}

In this section we present a number of applications of our main results, ranging from the TMP for point processes, to the Classical TMP (including the so-called Triangular, Rectangular and Sparse Connected TMP), and to the Subnormal Completion Problem for $2$--variable weighted shifts.


\subsection{{The Truncated Moment Problem for Point Processes}} \label{KLSsection} Let $X$ be a Hausdorff locally compact whose topology has a countable basis (hence $X$ is $\sigma$--compact). Denote by $\Cnt{c}{X}$ the space of all continuous real-valued functions compactly supported in $X$. Using the notation on page~\pageref{pagemeasure} (right before Theorem \ref{Stochel}), let $\N X$ be the subset of all $\naturals_0$--valued measures in $\M{}{X}$, i.e.,
\begin{eqnarray*}
\N X&:=&\left\{\sum_{i\in I} \delta_{x_i}: x_i\in X,\ I\subseteq\naturals \text{ with either }  |I|<\infty \right.\\
& \ &\left.\text{ or } I=\naturals \text{ and } (x_i)_{i\in I} \text{ without accumulation points}\right\}
\end{eqnarray*}
We endow $\M{}{X}$ with the so-called vague topology $\tau$, i.e., the weakest topology on $\M{}{X}$ such the map $\M{}{X}\to\reals$, $\nu\mapsto \int_Xf d\mu$ is continuous for all $f\in\Cnt{c}{X}$. The space $\N X$ is a closed subset of $(\M{}{X}, \tau)$ (see \cite[Lemma~4.4]{Kall}). A \emph{point process} is a positive Radon probability measure on $\M{}{X}$ which is supported in $\N X$. In this section, we will study the problem of characterizing point processes in terms of their first few moments, namely, the truncated $\N X$--moment problem. This is a long-standing problem in the statistical physics literature, where it is often addressed as \emph{realizability problem} (see, e.g., \cite{P64}), and recently in \cite{KuLeSp11} solubility conditions of Haviland type were obtained. We will use Theorems \ref{cmpttrnctmmnt} and \ref{trnctmmnt} to provide solubility criteria for the truncated $\N X$--moment problem, retrieving some of the main results in \cite{KuLeSp11}. To this aim, let us first identify the algebra involved in the truncated $\N X$--moment problem.

For any $n\in\naturals$ and $\nu\in \M{}{X}$, we denote by $\nu^{\otimes n}$ the (symmetric) $n^{th}$ power of $\nu$, i.e., 
$$
\nu^{\otimes n}(dx_1, \ldots, dx_n):=\nu(dx_1)\cdots\nu(dx_n)
$$ 
and for any $f_n\in\Cnt{c}{X^n}$ we define
$$
f_n\nu^{\otimes n}:=\int_{X^n}f_n(x_1, \ldots, x_n)\nu^{\otimes n}(dx_1, \ldots, dx_n).
$$
We also set $f_0\nu^{\otimes 0}:=f_0$ for any $f_0\in\reals$.
Then it is clear that for any $n,m\in\naturals_0$, $f_n\in\Cnt{c}{X^n}$ and $g_m\in\Cnt{c}{X^m}$ we have
\begin{equation}\label{mult-cross}
(f_n\nu^{\otimes n})(g_m\nu^{\otimes m})=(f_n\otimes g_m)\nu^{\otimes (n+m)}.
\end{equation}

Let $\mathscr{P}$ be the space of all polynomials in the variable $\eta$ in $\M{}{X}$ and coefficients in $\Cnt{c}{X}$, i.e., $a\in \mathscr{P}$ is of the following form
$$a(\eta):=\sum_{j=0}^N f_j \eta^{\otimes j}, \ N\in\naturals_0,\  f_0\in\reals,\  f_j\in\Cnt{c}{X^j}.$$
The space $\mathscr{P}$ together with the multiplication defined in \eqref{mult-cross} is a unital commutative $\reals$--algebra.

\begin{prop}\label{prop-chara}
The space $(\M{}{X}, \tau)$ is topologically embedded in the character space $\X{\mathscr{P}}$ endowed with the weakest topology making all Gelfand transforms continuous.
\end{prop}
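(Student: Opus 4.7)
The natural candidate for the embedding is the map $\Phi:\M{}{X}\to\X{\mathscr{P}}$ defined by
$\Phi(\nu)(a):=a(\nu)=\sum_{j=0}^N f_j\nu^{\otimes j}$ for $a=\sum_{j=0}^N f_j\eta^{\otimes j}\in\mathscr{P}$.
The first step is to check that $\Phi(\nu)$ is indeed an $\reals$--algebra homomorphism.
Linearity is immediate from the definition, the unit of $\mathscr{P}$ (the constant $1$) is sent to $1$,
and multiplicativity follows directly from the identity \eqref{mult-cross}: for
$a=\sum_j f_j\eta^{\otimes j}$ and $b=\sum_m g_m\eta^{\otimes m}$, the product
$ab=\sum_{j,m}(f_j\otimes g_m)\eta^{\otimes(j+m)}$ satisfies $\Phi(\nu)(ab)=\Phi(\nu)(a)\Phi(\nu)(b)$.

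The second step is injectivity of $\Phi$. If $\Phi(\nu_1)=\Phi(\nu_2)$, then testing on polynomials
of the form $a=f\eta$ with $f\in\Cnt{c}{X}$ yields $\int_X f\,d\nu_1=\int_X f\,d\nu_2$ for every
$f\in\Cnt{c}{X}$, which forces $\nu_1=\nu_2$ since the vague topology is Hausdorff on $\M{}{X}$.

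The third (and main) step is to verify that $\Phi$ is a homeomorphism onto its image.
Recall that the topology on $\X{\mathscr{P}}$ is the topology of pointwise convergence on
$\mathscr{P}$, i.e., $\alpha_i\to\alpha$ iff $\alpha_i(a)\to\alpha(a)$ for every $a\in\mathscr{P}$.
For the continuity of $\Phi^{-1}$ on $\Phi(\M{}{X})$, it suffices to note that if
$\Phi(\nu_i)\to\Phi(\nu)$, then taking $a=f\eta$ with $f\in\Cnt{c}{X}$ gives
$\int f\,d\nu_i\to\int f\,d\nu$, which is exactly vague convergence $\nu_i\to\nu$.
For the continuity of $\Phi$ itself, suppose $\nu_i\to\nu$ vaguely; by linearity of $\Phi(\nu)(\cdot)$
in $a$, it is enough to prove that for each $j\in\naturals_0$ and each
$f_j\in\Cnt{c}{X^j}$ one has $\int_{X^j}f_j\,d\nu_i^{\otimes j}\to\int_{X^j}f_j\,d\nu^{\otimes j}$.
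This is the main technical point of the proof.

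For functions of tensor product form $f_j=h_1\otimes\cdots\otimes h_j$ with $h_k\in\Cnt{c}{X}$,
the Fubini factorization gives $\int f_j\,d\mu^{\otimes j}=\prod_{k=1}^j\int h_k\,d\mu$, so the
convergence follows from vague convergence of $\nu_i$ in each factor. For a general
$f_j\in\Cnt{c}{X^j}$ with support contained in some compact $L\subseteq X^j$, pick a compact
neighborhood $K\subseteq X$ such that $L\subseteq K^j$, and a cut-off $\chi\in\Cnt{c}{X}$ equal to
$1$ on the projection of $L$. By the Stone-Weierstrass theorem applied to the algebra of
restrictions to $K^j$ of tensor products of elements of $\Cnt{c}{X}$, one can approximate $f_j$
uniformly on $K^j$ by linear combinations of tensor products; multiplying those combinations by
$\chi^{\otimes j}$ yields an approximation in $\Cnt{c}{X^j}$. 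Together with the uniform bound
$\sup_i\nu_i^{\otimes j}(K^j)=\sup_i(\nu_i(K))^j<\infty$ (guaranteed by vague convergence and
local finiteness), a standard $\varepsilon/3$ argument passes the convergence from tensor products
to arbitrary $f_j\in\Cnt{c}{X^j}$. The hard part of the plan is precisely this uniform bound and
approximation, as it requires the local compactness of $X$ together with the second countability to
control the mass of $\nu_i$ on compact sets uniformly in $i$.
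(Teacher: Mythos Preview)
Your proof follows the same route as the paper's: define the evaluation map $\Phi$, check it lands in $\X{\mathscr{P}}$, verify injectivity, and then argue bi-continuity. The paper's treatment of the last step is extremely terse—it simply asserts that continuity of $\Phi$ ``directly follows from the definition of topologies'' and that the homeomorphism onto the image is ``easy to see''. Your argument for continuity of $\Phi^{-1}$ via degree-one polynomials $f\eta$ is exactly right, and your Stone--Weierstrass approximation argument for the continuity of $\nu\mapsto\int_{X^j}f_j\,d\nu^{\otimes j}$ when $j\ge 2$ fills in what the paper leaves unsaid; this is indeed the only nontrivial point.

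One technical caveat on your $\varepsilon/3$ step: the quantity you actually need to bound is the total variation $|\nu_i|(K)$, not $\nu_i(K)$, since $\big|\int(f_j-g)\,d\nu_i^{\otimes j}\big|\le \|f_j-g\|_\infty\,(|\nu_i|(K))^j$. For \emph{positive} measures this is fine: choosing $\chi\in\Cnt{c}{X}$ with $\chi\ge\mathbf{1}_K$ and using vague convergence gives $\limsup_i\nu_i(K)\le\int\chi\,d\nu<\infty$, which is the eventual bound you need. For general \emph{signed} Radon measures, however, vague convergence of a net does not control $|\nu_i|(K)$, so the approximation argument as written does not close in that generality. Since the application in the paper is to $\N{X}\subseteq\M{+}{X}$, the positive case suffices; but this subtlety is glossed over in the paper's own proof as well.
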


\begin{proof}
Let us consider the following map:
$$\begin{array}{llll}
\phi:&\M{}{X}&\to& \X{\mathscr{P}}\\
\ & \nu &\mapsto & \phi(\nu)(a):=\sum\limits_{j=0}^N \int_{X^j}f_j d\nu^{\otimes j}, \ \forall \ a(\eta):=\sum\limits_{j=0}^N f_j \eta^{\otimes j}\in \mathscr{P}.
\end{array}
$$
For any $\nu\in\M{}{X}$, $\phi(\nu)\in\X{\mathscr{P}}$ since for any $\lambda\in\reals$ and $a,b\in \mathscr{P}$, say $a(\eta):=\sum_{j=0}^N f_j \eta^{\otimes j}$ and $b(\eta):=\sum_{k=0}^M g_k \eta^{\otimes k}$, we hace $\phi(\nu)(ab)$=
\begin{eqnarray*}
& \!\!\!\!\!\!\!\!&\phi(\nu)\left(\sum_{j=0}^N\sum_{k=0}^M  (f_j\otimes g_k)\eta^{\otimes (j+k)}\right)=\sum_{j=0}^N\sum_{k=0}^M\int_{X^{j+k}}(f_j\otimes g_k)d\nu^{\otimes (j+k)}\\
&=&\left(\sum_{j=0}^N\int_{X^{j}}f_jd\nu^{\otimes j}\right)\left(\sum_{k=0}^M\int_{X^{k}}g_k d\nu^{\otimes k}\right)=\Bigg(\phi(\nu)(a)\Bigg)\Bigg(\phi(\nu)(b)\Bigg) 
\end{eqnarray*}
and
\begin{eqnarray*}
\phi(\nu)(a+\lambda b)&=&\phi(\nu)\left(\sum_{j=0}^N  f_j\eta^{\otimes j}+\lambda \sum_{k=0}^Mg_k\eta^{\otimes k}\right)\\
&=&\sum_{j=0}^N \int_{X^j} f_j d\nu^{\otimes j}+\lambda \sum_{k=0}^M\int_{X^k}g_kd\nu^{\otimes k}=\phi(\nu)(a)+\lambda\phi(\nu)(b).
\end{eqnarray*}
The map $\phi$ is injective as whenever $\nu, \nu'\in\M{}{X}$ satisfy $\int_X f d\nu=\int_X f d\nu'$ for all $f\in\Cnt{c}{X}$ we have $\nu\cong\nu'$. The continuity of $\phi$ directly follows from the definition of topologies on $\X{\mathscr{P}}$ and $\M{}{X}$. \
Moreover, it is easy to see that $\phi(\M{}{X})$ endowed with the topology induced by $\X{\mathscr{P}}$ is homeomorphic to $(\M{}{X}, \tau)$. 
\end{proof}

For $N \ge 1$, let us consider now the linear subspace $\mathcal{Q}$ of $\mathscr{P}$ consisting of all polynomials of degree at most $N$, i.e.
$$
\mathcal{Q}:=\left\{a\in \mathscr{P}: a(\eta)=f_0+\sum_{j=1}^N  f_j\eta^{\otimes j}, \ f_0\in\reals, f_j\in \Cnt{c}{X^j} \; (j=1,\ldots,N) \right\}.
$$

An immediate consequence of Theorem \ref{cmpttrnctmmnt} is the following

\begin{crl}\label{risultato1}
Let $K\subseteq \M{}{X}$ be compact and $L:\mathcal{Q}\to \reals$ linear. There exists a $K$--representing measure for $L$ if and only if $L$ is $K-$positive.
\end{crl}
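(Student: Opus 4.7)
The plan is to reduce the statement to a direct application of Theorem \ref{cmpttrnctmmnt}, using the embedding $\phi: \mathcal{M}(X) \hookrightarrow \mathcal{X}(\mathscr{P})$ supplied by Proposition \ref{prop-chara} so that the algebraic setup of the theorem can be brought to bear on the analytic setup of point processes.

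The forward implication is immediate: if $P$ is a $K$--representing measure for $L$, then for any $a \in \mathcal{Q}$ with $a(\eta) \geq 0$ on $K$ we have $L(a) = \int_K a(\eta)\, dP(\eta) \geq 0$, so $L$ is $K$--positive. For the backward implication, I would take $A := \mathscr{P}$ and $B := \mathcal{Q}$ in Theorem \ref{cmpttrnctmmnt}. The first step is to view $K$ as a compact subset of $\mathcal{X}(\mathscr{P})$: since $\phi$ is a topological embedding (by Proposition \ref{prop-chara}) and $K$ is compact in $(\mathcal{M}(X),\tau)$, the image $\phi(K)$ is compact in $\mathcal{X}(\mathscr{P})$. $K$--positivity of $L$ on $\mathcal{Q}$ translates verbatim into $\phi(K)$--positivity on $\mathcal{Q}$, because $\widehat{a}(\phi(\nu)) = a(\nu)$ for every $a \in \mathcal{Q}$ and every $\nu \in \mathcal{M}(X)$.

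The hypothesis of Theorem \ref{cmpttrnctmmnt} requiring an element $q \in \mathcal{Q}$ with $\hat{q}$ strictly positive on $\phi(K)$ is handled trivially: take $q$ to be the constant polynomial $1 \in \mathcal{Q}$ (i.e., $f_0 = 1$ and $f_j = 0$ for $j \geq 1$), so that $\hat{q}(\phi(\nu)) = \phi(\nu)(1) = 1 > 0$ for all $\nu \in K$. Theorem \ref{cmpttrnctmmnt} then produces a positive Radon measure $\mu$ on $\mathcal{X}(\mathscr{P})$, supported in $\phi(K)$, such that
\[
L(a) = \int_{\phi(K)} \widehat{a}(\alpha)\, d\mu(\alpha), \quad \textrm{for all } a \in \mathcal{Q}.
\]

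The final step is to transport $\mu$ back to $\mathcal{M}(X)$. Since $\phi$ restricts to a homeomorphism between $K$ and $\phi(K)$, I would define the pushforward $P := (\phi^{-1})_\ast \mu$, which is a positive Radon measure on $\mathcal{M}(X)$ supported in $K$. Then the change-of-variables formula gives, for every $a \in \mathcal{Q}$,
\[
L(a) = \int_{\phi(K)} \widehat{a}(\alpha)\, d\mu(\alpha) = \int_K \widehat{a}(\phi(\nu))\, dP(\nu) = \int_K a(\nu)\, dP(\nu),
\]
which is the desired $K$--representing measure. There is no real obstacle here: the only point requiring minor care is confirming that $\phi$ restricted to the compact set $K$ is a homeomorphism onto $\phi(K)$ (so that the pushforward makes sense as a Radon measure), but this is already contained in Proposition \ref{prop-chara}.
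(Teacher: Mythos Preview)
Your proof is correct and follows essentially the same approach as the paper: both apply Theorem \ref{cmpttrnctmmnt} with $B=\mathcal{Q}$ and $q=1$. The paper's proof is a single sentence that leaves the identification of $K\subset\M{}{X}$ with $\phi(K)\subset\X{\mathscr{P}}$ and the transport of the resulting measure back to $\M{}{X}$ implicit, whereas you have spelled out these steps explicitly via Proposition \ref{prop-chara} and the pushforward $(\phi^{-1})_\ast\mu$.
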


\begin{proof}
Since $1\in\mathcal{Q}$, the conclusion follows by applying Theorem \ref{cmpttrnctmmnt} for $B:=\mathcal{Q}$ and $q(\eta):=1$.
\end{proof}

In \cite{KuLeSp11} the authors consider the factorial $n^{\textrm{th}}$ power $\eta^{\odot n}$ instead of the $n^{\textrm{th}}$ power $\eta^{\otimes n}$ we considered above. Denote by $\tilde{\mathscr{P}}$ the set of polynomials defined by replacing $\eta^{\otimes n}$ with  $\eta^{\odot n}$ in the above definition of $\mathscr{P}$. Then as sets $\mathscr{P}=\tilde{\mathscr{P}}$ and there is a bijective correspondence between $\N X$--positive linear functionals on $\mathscr{P}$ and $\N{X}$--positive linear functionals on $\tilde{\mathscr{P}}$. Hence, \cite[Theorem 3.4, Proposition 3.9 and Theorem 3.10]{KuLeSp11} directly follow from Corollary \ref{risultato1}. In \cite{KuLeSp11} the authors also study the case $K=\N{X}$ (see \cite[Theorem 3.14]{KuLeSp11} for the case when $X$ is compact and \cite[Theorem 3.17]{KuLeSp11} for $X$ non-compact). We plan to pursue the relation between these two results and Theorem \ref{trnctmmnt} in a future manuscript.




\subsection{{The Classical Truncated Moment Problem}} \ Theorem \ref{trnctmmnt} can be seen as a generalization of the Curto-Fialkow's solution of the Classical truncated moment 
problem \cite[Theorem 2.2]{Curto-Fialkow-2008} in two directions: \\
(1) it assumes a unital commutative $\reals$--algebra instead of $\rx$; \\
(2) it remains valid for a positive functional over any 
given linear subspace of $\rx$, without extra assumption of finite dimensionality. 

We show in Corollary \ref{poly-one-degree-higher} how to derive an improved version of \cite[Theorem 2.2]{Curto-Fialkow-2008} from Theorem~\ref{trnctmmnt}. To see this, we need the following lemma.

\begin{lemma}\label{min-bnd-poly}
For every monomial $\ux^{\alpha}$ of degree $2d$ or $2d+1$ with $d \in \naturals$, there exists a polynomial {$p_\alpha$} of degree at most $2d+2$ such that $|\ux^{\alpha}|\leq {p_\alpha}(\ux)$ and $p_\alpha(\underline{y})\geq 1$ for all $\underline{y}\in\reals^n$. 
\end{lemma}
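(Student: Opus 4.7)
My plan is to exhibit an explicit polynomial that does the job, rather than build one by induction or extension-type arguments. The natural candidate is
\[
p(\underline{X}) := 1 + X_1^{2d+2} + X_2^{2d+2} + \cdots + X_n^{2d+2}.
\]
This polynomial has degree exactly $2d+2$ and, since every monomial $X_i^{2d+2}$ is nonnegative on $\reals^n$ and we have added $1$, clearly $p(\underline{y}) \geq 1$ for all $\underline{y} \in \reals^n$. So the lower-bound part of the conclusion is immediate.

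The remaining work is to show $|\underline{X}^{\alpha}| \leq p(\underline{X})$ pointwise on $\reals^n$ for $|\alpha| \in \{2d, 2d+1\}$. I would fix $\underline{y} = (y_1,\ldots,y_n) \in \reals^n$, set $M := \max_{1 \leq i \leq n} |y_i|$, and split into two cases. If $M \leq 1$, then $|\underline{y}^{\alpha}| = \prod_i |y_i|^{\alpha_i} \leq 1 \leq p(\underline{y})$. If $M > 1$, let $j$ be an index with $|y_j| = M$; then
\[
|\underline{y}^{\alpha}| \leq M^{|\alpha|} \leq M^{2d+2} = y_j^{2d+2} \leq p(\underline{y}),
\]
using $|\alpha| \leq 2d+1 < 2d+2$ together with $M > 1$. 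Combining the two cases, $|\underline{X}^\alpha| \leq p(\underline{X})$ on $\reals^n$.

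There is no real obstacle here; the only subtlety to double-check is that the bound works uniformly for both degrees $2d$ and $2d+1$, which is handled by the single inequality $|\alpha| \leq 2d+1 \leq 2d+2$ used in the second case, and that neither case requires $|\alpha|$ to have a specific parity. The lemma is purely a convenience statement producing the dominating element required to apply Theorem~\ref{trnctmmnt} in the classical polynomial setting, so a clean explicit construction like the one above is preferable to any extremal or existence-only argument.
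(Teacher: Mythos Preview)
Your proof is correct and considerably simpler than the paper's. The paper decomposes $\alpha=\gamma+2\beta$ with $\gamma_i\in\{0,1\}$, applies the arithmetic--geometric mean inequality to $|\ux^{\gamma}|$, then uses the elementary bound $|a|^r\leq(1+a^2)^{\lceil(r+1)/2\rceil}$ to arrive at a product polynomial of the form $\frac{1}{|\gamma|}\big(\sum_i\gamma_i(1+X_i^2)^{\lceil(|\gamma|+1)/2\rceil}\big)\prod_i(1+X_i^2)^{\beta_i}$, with a separate degree count for the even and odd cases. Your choice $p(\ux)=1+\sum_i X_i^{2d+2}$ together with the two--case argument on $M=\max_i|y_i|$ replaces all of this with a single line. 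A small bonus of your construction is that the same $p$ dominates \emph{every} monomial of degree at most $2d+1$, which makes the subsequent Corollary~\ref{poly-init-sgmnt} immediate; in the paper's version the polynomial depends on the specific $\alpha$, so a uniform $p$ for a whole family still has to be argued. Conversely, the paper's factored form $(1+X_i^2)^{\cdots}$ is sometimes convenient downstream because it is a sum of squares by construction, but that feature is not used here.
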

\begin{proof}
Decompose $\alpha=\gamma+2\beta$, such that $\gamma=(\gamma_1,\cdots,\gamma_n)$ and $\gamma_i\in\{0, 1\}$ for each 
$i=1,\cdots,n$. So, $\ux^{\alpha}=\ux^{\gamma}\ux^{2\beta}$. \
If $\gamma\not\equiv 0$, then by arithmetic-geometric inequality,
\begin{eqnarray} \label{lemma61}
      |\ux^{\alpha}| = |\ux^{\gamma}|\ux^{2\beta} \leq \frac{1}{|\gamma|}\left(\sum_{i=1}^n\gamma_i|X_i|^{|\gamma|}\right)\ux^{2\beta}.
\end{eqnarray}
Observe now that, for any real $a$ and any integer $r$, we have $|a^r|\leq(1+a^2)^{\lceil\frac{r+1}{2}\rceil}$. Then
\[
\begin{array}{lcl}
      |\ux^{\alpha}| & \leq & \frac{1}{|\gamma|}\left(\sum_{i=1}^n\gamma_i(1+X_i^2)^{\lceil\frac{|\gamma|+1}{2}\rceil}\right)\ux^{2\beta}\\
       & \leq & \frac{1}{|\gamma|}\left(\sum_{i=1}^n\gamma_i(1+X_i^2)^{\lceil\frac{|\gamma|+1}{2}\rceil}\right)\prod_{i=1}^n(1+X_i^2)^{\beta_i}=:p_\alpha(\ux).
\end{array}
\]
Clearly $p_\alpha(\underline{y})\geq 1$ for all $\underline{y}\in\reals^n$. Moreover:\\
- \ If $|\alpha|=2d$, then $|\gamma|=2l$ with $l\ge 1$. Thus, $\deg(p_\alpha)=2{\lceil \frac{2l+1}{2}\rceil}+2d-2l=2d+2.$ \\
- \ If $|\alpha|=2d+1$, then $|\gamma|=2l+1$ with $l\in\naturals_0$. Thus, $\deg(p_\alpha)=2{\lceil \frac{2l+2}{2}\rceil}+2d+1-2l-1=2d+2$. 

Last but not least, if $\gamma\equiv 0$ then $p_\alpha(\ux):=\prod_{i=1}^n(1+X_i^2)^{\beta_i}$ satisfies all the required properties.
 \end{proof}
\begin{crl}\label{poly-init-sgmnt}
Let $\mathcal{P}\subset\rx$ be a family of polynomials of degree at most $k$. 
Then there exists a polynomial $p$ of degree $k+1$ when $k$ is odd and $k+2$ if $k$ is even, such that $p\geq 1$ on $\reals^n$ and $\sup_{\underline{y}\in\reals^n}\left|\frac{f(\underline{y})}{p(\underline{y})}\right|<\infty$ for 
all $f\in\mathcal{P}$.
\end{crl}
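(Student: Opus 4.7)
My approach is to reduce the statement about the entire family $\mathcal{P}$ to a statement about the finitely many monomials $\ux^{\alpha}$ of degree at most $k$, and then invoke Lemma \ref{min-bnd-poly} term-by-term. The key observation is that there are only $\binom{n+k}{k}$ such monomials, so we can afford to sum individual dominating polynomials without blowing up the degree.

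First, for every multi-index $\alpha$ with $2\leq|\alpha|\leq k$, I would apply Lemma \ref{min-bnd-poly} to produce a polynomial $p_{\alpha}$ with $|\ux^{\alpha}|\leq p_{\alpha}$ on $\reals^n$, $p_{\alpha}\geq 1$ on $\reals^n$, and $\deg p_{\alpha}\leq 2\lfloor |\alpha|/2\rfloor+2$. For the low-degree indices excluded by the hypothesis $d\in\naturals$ of Lemma \ref{min-bnd-poly}, I would set $p_0:=1$ and $p_{e_i}:=1+X_i^2$ by hand; these satisfy $p\geq 1$ and dominate the corresponding monomials, and have degree at most $2$. Then define
$$
p:=\sum_{|\alpha|\leq k} p_{\alpha}.
$$
By construction $p\geq 1$ on $\reals^n$. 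The degree bound follows from case analysis on the parity of $k$: if $k=2d+1$ is odd, every summand has degree at most $2d+2=k+1$, so $\deg p\leq k+1$; if $k=2d$ is even, every summand has degree at most $2d+2=k+2$, so $\deg p\leq k+2$.

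Finally, for any $f=\sum c_{\alpha}\ux^{\alpha}\in\mathcal{P}$, which has only finitely many nonzero coefficients since $f\in\rx$, I get
$$
|f(\underline{y})|\leq\sum_{|\alpha|\leq k}|c_{\alpha}|\,p_{\alpha}(\underline{y})\leq C_f\cdot p(\underline{y})\quad\textrm{for all }\underline{y}\in\reals^n,
$$
where $C_f:=\max_{|\alpha|\leq k}|c_{\alpha}|$, yielding $\sup_{\underline{y}\in\reals^n}|f(\underline{y})/p(\underline{y})|\leq C_f<\infty$.

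I do not anticipate any real obstacle. The entire argument is bookkeeping: the parity-dependent degree bound in Lemma \ref{min-bnd-poly} transfers exactly to the degree bound in the corollary, and the low-degree edge cases $|\alpha|\in\{0,1\}$ that fall outside the range $d\in\naturals$ of the lemma are handled trivially by the explicit choices $p_0=1$ and $p_{e_i}=1+X_i^2$.
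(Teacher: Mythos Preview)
Your proof is correct and is precisely the natural deduction the paper has in mind: the corollary is stated without proof, and summing the per-monomial dominating polynomials from Lemma~\ref{min-bnd-poly} over the finitely many $\alpha$ with $|\alpha|\le k$ (handling $|\alpha|\in\{0,1\}$ by hand, since the lemma requires $d\in\naturals$) is exactly how one reads it off. The parity bookkeeping and the edge cases are handled correctly.
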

\proof Apply Lemma \ref{min-bnd-poly} to each monomial $\ux^\alpha$ appearing in $\mathcal{P}$ and set $p:=1+\sum_{\alpha}p_\alpha$.
\endproof
Recall that, for any $k\in \naturals$, we denote by $\reals[\ux]_k$ the space of all polynomials in $\reals[\ux]$ of degree at most $k$.

\begin{crl}\label{poly-one-degree-higher}
Let $K\subseteq \reals^n$ be closed, $d \in \naturals$, and $L$ be a $K$--positive linear functional on $\reals[\ux]_{2d}$ (resp. $\reals[\ux]_{2d+1}$). Then there exists $p\in\reals[\ux]_{2d+2}$ such that $p\geq 1$ on $K$ and $\sup_{\underline{y}\in\reals^n}\left|\frac{f(\underline{y})}{p(\underline{y})}\right|<\infty$ for 
all $f\in\reals[\ux]_{2d}$ (resp. for 
all $f\in\reals[\ux]_{2d+1}$). Hence, there exists a $K$--representing measure for $L$ if and only if $L$ admits a $K$--positive extension to $B_p$.
\end{crl}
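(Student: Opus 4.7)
The plan is to deduce both assertions of the corollary as direct consequences of results already established in the paper: the existence of the polynomial $p$ from Corollary \ref{poly-init-sgmnt}, and the iff characterization from Theorem \ref{trnctmmnt} together with a standard finitely-atomic reduction.

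First, to produce $p$, I would apply Corollary \ref{poly-init-sgmnt} to the family $\mathcal{P}:=\reals[\ux]_{2d}$ (respectively $\reals[\ux]_{2d+1}$). Since $k=2d$ is even and $k=2d+1$ is odd, in both cases that corollary delivers a polynomial $p\in\reals[\ux]_{2d+2}$ with $p\geq 1$ on $\reals^n$ (a fortiori on $K$) and $\sup_{\underline{y}\in\reals^n}|f(\underline{y})/p(\underline{y})|<\infty$ for every $f$ in the given subspace. This settles the first assertion.

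For the ``$\Leftarrow$'' direction of the iff, I would invoke Theorem \ref{trnctmmnt} with $A=\rx$, $B:=\reals[\ux]_{2d}$ (resp.\ $\reals[\ux]_{2d+1}$), the given closed set $K$, and the polynomial $p$ just constructed. The hypotheses are easily checked: $p\in A\setminus B$ because $\deg p=2d+2$ strictly exceeds the degree of any element of $B$; $\hat{p}\ge 1$ on $K$; the constant $1$ belongs to $B\subseteq B_p$; $B_p$ generates $A=\rx$ because $B$ already contains $X_1,\dots,X_n$ (using $d\ge 1$); and condition \eqref{p-existence}, which is equivalent to \eqref{p-existence2} by Remark \ref{rem29}, is precisely the boundedness furnished above. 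The hypothesis that $L$ admits a $K$-positive extension to $B_p$ is supplied by assumption, so Theorem \ref{trnctmmnt} yields a $K$-representing measure for $L$.

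For the ``$\Rightarrow$'' direction, I would appeal to the finitely-atomic representing measure result of Bayer-Teichmann and Richter recalled in the paragraph following Theorem \ref{thm42}: since $B$ is finite-dimensional and $K$ is locally compact, any soluble $B$-truncated $K$-moment problem admits a $K$-representing measure of the form $\nu_0=\sum_{i=1}^m\lambda_i\delta_{x_i}$ with $x_i\in K$ and $\lambda_i>0$. Defining $\bar{L}:B_p\to\reals$ by $\bar{L}(q):=\sum_{i=1}^m\lambda_i\hat{q}(x_i)$ gives a linear functional that extends $L$ (because $\nu_0$ represents $L$ on $B$) and is manifestly $K$-positive (since every $x_i$ lies in $K$). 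No substantial obstacle is anticipated; the corollary is a clean packaging of Theorem \ref{trnctmmnt} using Lemma \ref{min-bnd-poly}'s degree control. The only point deserving a moment's care is the verification that $B_p$ generates $A$, which pivots on having $d\ge 1$ so that $X_1,\ldots,X_n$ belong to $B$.
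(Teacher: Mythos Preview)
Your argument is correct and follows the paper's approach closely: both you and the paper obtain $p$ from Corollary~\ref{poly-init-sgmnt} and then invoke Theorem~\ref{trnctmmnt} for the ``$\Leftarrow$'' direction. Your verification of the hypotheses of Theorem~\ref{trnctmmnt} (in particular that $p\notin B$, that $1\in B_p$, and that $B_p$ generates $\rx$) is more explicit than the paper's one-line appeal.

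The one genuine addition in your write-up is the ``$\Rightarrow$'' direction. The paper's proof simply says ``apply Theorem~\ref{trnctmmnt} \ldots\ and get exactly the desired statement,'' but Theorem~\ref{trnctmmnt} is a one-way implication; it does not supply the converse. Your use of the Bayer--Teichmann/Richter finitely-atomic result (recalled after Theorem~\ref{thm42}) to pass from an arbitrary $K$-representing measure to a finitely atomic one, and thence to a well-defined $K$-positive extension $\bar L(q)=\sum_i\lambda_i q(x_i)$ on all of $B_p$, is the natural way to close this gap and is fully justified in the present setting ($B$ finite-dimensional, $K\subseteq\reals^n$ closed hence locally compact). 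So your proof is in fact slightly more complete than the paper's.

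Two minor points worth noting. First, Theorem~\ref{trnctmmnt} is stated for $K$ closed and \emph{non-compact}; strictly speaking, when $K$ is compact you should instead invoke Theorem~\ref{cmpttrnctmmnt} (which immediately gives a representing measure since $1\in B$), though the proof of Theorem~\ref{trnctmmnt} goes through verbatim in the compact case as well. Second, your claim ``$\deg p=2d+2$'' is slightly stronger than what Corollary~\ref{poly-init-sgmnt} literally asserts (degree \emph{at most} $2d+2$); however, the explicit construction in Lemma~\ref{min-bnd-poly} does produce a polynomial of degree exactly $2d+2$, so $p\in A\setminus B$ is indeed secured.
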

\begin{proof}
Let $k\in\{2d, 2d+1\}$. By Corollary \ref{poly-init-sgmnt} applied to $\mathcal{P}=\reals[\ux]_{k}$, we know that there exists $p\in\reals[\ux]_{2d+2}$ such that $p\geq 1$ on $\mathbb{R}^n$ and 
$\sup_{\underline{y}\in K}\left|\frac{b(\underline{y})}{p(\underline{y})}\right|<\infty$ for all $b\in\reals[\ux]_k$. Then we can apply Theorem \ref{trnctmmnt} for $B=\mathcal{P}$ and
such a $p$ and get exactly the desired statement.\end{proof}

Corollary \ref{poly-one-degree-higher} slightly improves Curto and Fialkow's result \cite[Theorem 2.2]{Curto-Fialkow-2008}, because the first requires the existence of a $K-$positive extension to a subspace $B_p$ of $\reals[\ux]_{2d+2}$ while the second to the whole $\reals[\ux]_{2d+2}$.

When $K$ is compact, using Theorem \ref{cmpttrnctmmnt} we can also partly retrieve the following result due to Fialkow and Nie (see \cite[Theorem 2.2]{Fialkow-Nie}), which is a generalization of a well-known result by Tchakaloff  (see \cite[Theorem I, p.~129]{Tchakaloff}). Note that we cannot directly derive from Theorem \ref{cmpttrnctmmnt} that the representing measure is finitely atomic, but only that there exists a $K$--representing measure. However, the reader might find useful to see it as a consequence of our results.
 
\begin{crl} \label{FiNie100}
Suppose $K\subset\reals^{n}$ is compact, $d \in \naturals$, $H$ is a linear subspace of $\reals[\ux]_d$ such that there exists $p\in H$ with $p(x)>0$ for all $x\in K$. Let $L:H\to \reals$ be a linear functional that is $K$--positive. Then there exists a $K$--representing measure for $L$.
\end{crl}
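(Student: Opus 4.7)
The plan is to apply Theorem \ref{cmpttrnctmmnt} directly, with $A := \reals[\underline{X}]$, $B := H$, and $q := p$. Since characters of $\reals[\underline{X}]$ correspond to point evaluations, we have $\X{A} \cong \reals^n$ and $\hat{a} = a$ for every $a \in A$, so the compact set $K \subseteq \reals^n$ can be regarded as a compact subset of $\X{A}$. The polynomial $p \in H$ satisfies $\hat{p} = p > 0$ on $K$, matching the ``strictly positive'' hypothesis of Theorem \ref{cmpttrnctmmnt}. The given functional $L:H \to \reals$ is $K$--positive by assumption. Hence all the hypotheses of Theorem \ref{cmpttrnctmmnt} are fulfilled, and we conclude that there exists a positive Radon measure $\mu$ supported in $K$ such that
\[
        L(b) = \int \hat{b}\, d\mu = \int b\, d\mu, \quad \textrm{for all } b \in H,
\]
which is precisely a $K$--representing measure for $L$.

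No genuine obstacle arises, because Corollary \ref{FiNie100} is a direct specialization of Theorem \ref{cmpttrnctmmnt} to the polynomial algebra. The only things worth double-checking are (i) that we are not trying to recover the finer finite-atomicity conclusion of Fialkow--Nie (indeed, as the surrounding text acknowledges, Theorem \ref{cmpttrnctmmnt} only delivers \emph{some} representing Radon measure, not a finitely atomic one); (ii) that the requirement ``$\hat{p}$ strictly positive on $K$'' in Theorem \ref{cmpttrnctmmnt} is literally ``$\hat{p}(\alpha) > 0$'' pointwise on $K$, which by compactness of $K$ and continuity of $p$ automatically yields a positive lower bound---this lower bound is exactly what is used in the proof of Theorem \ref{cmpttrnctmmnt} when one rescales $p$ so that $\hat{p} \ge 1$ on $K$. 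Once these points are noted, the proof reduces to a single invocation of Theorem \ref{cmpttrnctmmnt}.
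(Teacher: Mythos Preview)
Your proof is correct and takes exactly the same approach as the paper: the paper's proof is the single sentence ``By applying Theorem \ref{cmpttrnctmmnt} for $A=\reals[\ux]$, $B=H$ and $K\subset\reals^n$ compact, we get the desired conclusion.'' Your additional verifications (the identification $\X{A}\cong\reals^n$, the pointwise positivity of $p$, and the caveat about finite atomicity) are all accurate and simply spell out what the paper leaves implicit.
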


\begin{proof}
By applying Theorem \ref{cmpttrnctmmnt} for $A=\reals[\ux]$, $B=H$ and $K\subset\reals^n$ compact, we get the desired conclusion.
\end{proof}

In \cite{Fialkow-Nie}, the linear subspace $H$ in Corollary \ref{FiNie100} is called $K$--full. 


\subsection{{Triangular, Rectangular, Sparse Connected Truncated Moment Problems}} \ We now study some natural connections with the TMP considered by R.E. Curto and L.A. Fialkow (\cite{tcmp1,tcmp3}), by M. Putinar \cite{Put}, by J. Nie \cite{Nie}, and by M. Laurent and B. Mourrain \cite{LaMo}. 
For convenience, we illustrate these special instances of TMP for the two-variable case, namely we present them as $B$--truncated $K$--moment problems with $B\subseteq \reals[s,t]$ and $K\subseteq\mathbb{R}^2$ closed (cf. Definition \ref{B-TKMP}). 

Given a collection of real numbers $\gamma \equiv \gamma ^{(2n)}=\{\gamma_{00},\gamma _{10},\gamma _{01},\cdots ,$ $\gamma _{2n,0},$\ $\gamma
_{2n-1,1},\cdots , \gamma _{1,2n-1}, \gamma _{0,2n}\}$, with $\gamma _{00}=1$ and $K\subseteq\reals^2$ closed, the Classical $K-$TMP consists of finding a positive Radon measure $\mu $
supported in $K$ such that 
$$
\gamma _{ij}=\int s^{i}t^{j}\;d\mu \;\;\;(0\leq i+j\leq 2n);
$$
in this case, $\gamma $ is called a \textit{$K-$truncated moment sequence} (of order $2n$) and $\mu $ is called a 
\textit{$K$-representing measure} for $\gamma $.
Hence, the Classical $K-$TMP corresponds to the $B$--truncated $K$--moment problem with $B=\spn\{s^{i}t^{j}: 0\leq i+j\leq 2n\}=\mathbb{R}[s,t]_{2n}$.

The reader will easily observe that the powers of monomials in $B=\mathbb{R}[s,t]_{2n}$ can be arranged in a right-triangle configuration in $\naturals_0 \times \naturals_0$ (see Figure \ref{Figure 1}, left diagram); this is why the Classical TMP is often addressed to as a \textit{Triangular TMP}. The \textit{Rectangular TMP} considered by M. Putinar \cite{Put} instead corresponds to the right-hand side diagram in Figure \ref{Figure 1} (see also \cite{KimWoe}); that is, one considers $B=\spn\{s^it^j: 0 \le i \le M, \ 0 \le j \le N\}$ for fixed $M,N\in\mathbb{N}$. 

\psset{unit=0.7mm, linewidth=0.7\pslinewidth} 
\begin{figure}[h]
\begin{center}
\begin{pspicture}(165,65)

\rput(0,12){

\psline(10,20)(75,20)
\psline(10,40)(75,40)
\psline(10,60)(75,60)
\psline(10,80)(75,80)
\psline(10,20)(10,85)
\psline(30,20)(30,85)
\psline(50,20)(50,85)
\psline(70,20)(70,85)

\pscircle[fillstyle=solid,fillcolor=black!10](10,20){4}
\put(9,19){$1$} 
\pscircle[fillstyle=solid,fillcolor=black!10](30,20){4}
\put(29,19){$s$}
\pscircle[fillstyle=solid,fillcolor=black!10](50,20){4}
\put(48,18.5){$s^2$}
\pscircle[fillstyle=solid,fillcolor=black!10](10,40){4}
\put(9,39){$t$}
\pscircle[fillstyle=solid,fillcolor=black!10](10,60){4}
\put(8,58.5){$t^2$}
\pscircle[fillstyle=solid,fillcolor=black!10](30,40){4}
\put(28,39){$st$}


}

\rput(80,2.5){

\psline(10,20)(75,20)
\psline(10,40)(75,40)
\psline(10,60)(75,60)
\psline(10,80)(75,80)
\psline(10,20)(10,85)
\psline(30,20)(30,85)
\psline(50,20)(50,85)
\psline(70,20)(70,85)

\pscircle[fillstyle=solid,fillcolor=black!10](10,20){4}
\put(9,19){$1$} 
\pscircle[fillstyle=solid,fillcolor=black!10](30,20){4}
\put(29,19){$s$}
\pscircle[fillstyle=solid,fillcolor=black!10](50,20){4}
\put(47.5,18.5){$s^2$}
\pscircle[fillstyle=solid,fillcolor=black!10](10,40){4}
\put(9,39){$t$}
\pscircle[fillstyle=solid,fillcolor=black!10](30,40){4}
\put(28,39){$st$}
\pscircle[fillstyle=solid,fillcolor=black!10](50,40){4}
\put(46.6,39){$s^2t$}


}

\end{pspicture}
\end{center}
\caption{\small{Diagrams of monomial powers in the Classical Truncated Moment Problem (left) and the Rectangular Truncated Moment Problem (right)}}
\label{Figure 1}
\end{figure}

Both diagrams are actually special cases of the \textit{Sparse Connected} TMP considered by M. Laurent and B. Mourrain \cite{LaMo}. For, they focus on monomial diagrams which are associated with sets $\mathcal{C}$ which are both \textit{connected} and contain the monomial $1$, that is a $B$--truncated moment problem for $B=\spn(\mathcal{C})$ in our terminology. A set $\mathcal{C}$ is connected if every monomial in $\mathcal{C}$ is the endpoint of a staircase path starting at $1$. For instance, $\{1,s,st\}$ is connected (see Figure \ref{Figure 2}), but $\{1,st\}$ is not. 

\psset{unit=0.7mm, linewidth=.75\pslinewidth} 
\begin{figure}[h]
\begin{center}
\begin{pspicture}(165,70)

\rput(38,5){

\psline(10,20)(75,20)
\psline(10,40)(75,40)
\psline(10,60)(75,60)
\psline(10,80)(75,80)
\psline(10,20)(10,85)
\psline(30,20)(30,85)
\psline(50,20)(50,85)
\psline(70,20)(70,85)

\pscircle[fillstyle=solid,fillcolor=black!10](10,20){4}
\put(9,19){$1$} 
\pscircle[fillstyle=solid,fillcolor=black!10](30,20){4}
\put(29,19){$s$}
\pscircle[fillstyle=solid,fillcolor=black!10](30,40){4}
\put(28,39){$st$}

}





\end{pspicture}
\end{center}
\caption{\small{Diagrams of monomials for $\mathcal{C}=\{1,s,st\}$}}
\label{Figure 2}
\end{figure}

Our general setting can also cover the Sparse Connected TMP in the more general case when there are infinitely many moments given in one of the variables. This is illustrated in Figure \ref{Figure 3}, where we consider the case $\mathcal{C}:=\{1,s,t,st,t^2,s^2t,s^3t,\cdots\}$.

For all the situations described so far, if we assume that the goal is to find a representing measure supported in a compact $K\subseteq\mathbb{R}^2$, then the hypotheses in Theorem \ref{cmpttrnctmmnt} are satisfied, as the monomial $1$ is always in $B$ and $K$ is compact. Therefore, the Riesz functional $\Lambda_{\gamma}$ admits an integral representation via a positive Radon measure supported in $K$, provided one can verify that $\Lambda_{\gamma}$ is nonnegative when evaluated on polynomials nonnegative on $K$. (Recall that the Riesz functional $\Lambda_{\gamma}:B \subseteq \mathbb{R}[s,t]_n \rightarrow \reals$ is given by $p \, {\longmapsto} \sum_{ij}p_{ij}\gamma _{ij}$ for all $p(s,t)\equiv \sum_{ij}p_{ij}s^{i}t^{j}$.) \ This verification varies from case to case, but it typically comes down to checking that, in addition to the positivity of the basic moment matrix associated with $\gamma$ one needs the positivity of the so-called localizing matrices, which keep track of the support of the representing measure (see \cite{tcmp3}, \cite{tcmp4}, \cite{Curto-Fialkow-2005}). 

In summary, when $K$ is compact, as an application of Theorem \ref{cmpttrnctmmnt} we can subsume some important aspects of previous results on the existence of $K-$representing measures for the Classical (i.e., {\it Triangular}) TMP, Putinar's {\it Rectangular} TMP, Nie's $\mathcal{A}$--TMP, and the {\it Sparse Connected} case covered in \cite{LaMo}. All of these TMPs are concerned with finite collections of moments, but our results do allow for {\it infinitely} many moments as initial data, see e.g. in Figure 4.

\psset{unit=.75mm, linewidth=.75\pslinewidth} 
\begin{center}
\begin{figure}[ht]
\begin{pspicture}(85,75)

\rput(0,12){

\psline(10,20)(86,20)
\psline(10,40)(86,40)
\psline(10,60)(86,60)
\psline(10,80)(86,80)
\psline(10,20)(10,93)
\psline(30,20)(30,93)
\psline(50,20)(50,93)
\psline(70,20)(70,93)

\pscircle[fillstyle=solid,fillcolor=black!10](10,20){4}
\put(9,19){$1$} 
\pscircle[fillstyle=solid,fillcolor=black!10](30,20){4}
\pscircle[fillstyle=solid,fillcolor=black!10](10,60){4}
\put(8.6,58.3){$t^2$}
\pscircle[fillstyle=solid,fillcolor=black!10](10,40){4}
\put(29,19){$s$}
\pscircle[fillstyle=solid,fillcolor=black!10](30,40){4}
\put(28,39){$st$}
\pscircle[fillstyle=solid,fillcolor=black!10](50,40){4}
\put(46.7,38.8){$s^2t$}
\pscircle[fillstyle=solid,fillcolor=black!10](70,40){4}
\put(66.7,38.8){$s^3t$}
\put(87,39){$\boldsymbol{\cdots}$}


\put(9,39){$t$}
}

\end{pspicture}
\caption{\small{Monomial diagram for $\mathcal{C}=\{1,s,t,st,t^2,s^2t,s^3t,\cdots\}$} }
\label{Figure 3}
\end{figure}
\end{center}

When $K$ is non compact, Corollary \ref{poly-one-degree-higher} provides for the Triangular TMP that the Riesz functional $\Lambda_{\gamma}: \mathbb{R}[s,t]_{n}\to\mathbb{R}$ admits an integral representation via a positive Radon measure supported in $K$ if and only if   $\Lambda_{\gamma}$ has a $K-$positive extension to a certain proper subspace of $\mathbb{R}[s,t]_{n+2}$. A similar result can be easily showed for the Rectangular and Sparse Connected $K-$TMP for $K$ non-compact and $\mathcal{C}$ finite, combining Corollary \ref{poly-init-sgmnt} and Theorem \ref{trnctmmnt}. However, this technique will not always provide a solution to the Sparse Connected $K-$TMP for $K$ non-compact and $\mathcal{C}$ infinite, except when the unboundedness of the degree in one variable is compensated by a compact component of $K$. For example, if we take $\mathcal{C}=\{1,s,t,st,t^2,s^2t,s^3t,\cdots\}\subseteq\mathbb{R}[s][t]_{2}$ and $K=K_1\times \mathbb{R}$ with $K_1\subset\mathbb{R}$ compact then $L: \spn(\mathcal{C})\to \mathbb{R}$ has a $K-$representing measure if and only if $L$ has a $K-$positive extension to $B_p$ for any $p\in\mathbb{R}[s][t]_{3}$ nonnegative on $K$ (see Theorem \ref{thm-partially-cmpct}). 

We momentarily digress to recall some basic facts about moment matrices and flat extensions. Naturally associated with each TMP is a moment matrix $M \equiv M(n)\equiv M(n)(\gamma )$, given by $M_{(i_1,i_2),(j_1,j_2)}:=\gamma_{(i_1+j_1,i_2+j_2)}$. For instance, when $n=3$,  
\begin{equation*}
M(3)\equiv \left( 
\begin{array}{cccccc}
\gamma_{00} & \gamma_{10} & \gamma_{01} & \gamma_{20} & \gamma_{11} & \gamma_{02} \\ 
\gamma_{10} & \gamma_{20} & \gamma_{11} & \gamma_{30} & \gamma_{21} & \gamma_{12} \\ 
\gamma_{01} & \gamma_{11} & \gamma_{02} & \gamma_{21} & \gamma_{12} & \gamma_{03} \\
\gamma_{20} & \gamma_{30} & \gamma_{21} & \gamma_{40} & \gamma_{31} & \gamma_{22} \\ 
\gamma_{11} & \gamma_{21} & \gamma_{12} & \gamma_{31} & \gamma_{22} & \gamma_{13} \\ 
\gamma_{02} & \gamma_{12} & \gamma_{03} & \gamma_{22} & \gamma_{13} & \gamma_{04} 
\end{array}%
\right). 
\end{equation*}
Observe that $M(n+1)=\left(\begin{array}{cccc} M(n) & B \\ B^{*} & C \end{array} \right)$, and recall
that $M(n+1) \geq 0 \Leftrightarrow \textrm{(i)} M(n) \geq 0$, (ii) $B=M(n)W$ for some $W$, and (iii) $C \geq W^{*}M(n)W$ 
(\cite{Smu}; cf. \cite{tcmp1}). 

\medskip
The matrix $\ M(n)$ includes all given moments at least once. The positive semidefinetess of $M(n)$ (as a Hilbert space operator on a finite dimensional space) corresponds to the positivity of the Riesz functional on the positive cone generated by polynomials of the form $p^2$, where $p \in \mathbb{R}[s,t]_n$. Let $r:=\rank M(n)$. A fundamental result in TMP theory is the so-called Flat Extension Theorem, which states that $\gamma^{(2n)}$ admits an $r$--atomic representing measure if and only if $M(n)$ admits a \textit{flat} extension $M(n+1)$; that is an extension such that $\rank M(n+1)=\rank M(n)=r$. In \cite{tcmp1} the columns of $M(n)$ are labeled using monomials as indices, as follows: $1,S,T,S^2,ST,T^2,S^3,S^2T,ST^2,T^3,\cdots$. In this terminology, to say that $M(n+1)$ is a flat extension of $M(n)$ means that each column labeled with a monomial of degree $n+1$ can be written as a linear combination of columns labeled with monomials of degree at most $n$. 

\smallskip
In both diagrams in Figure \ref{Figure 5} the monomials in the squares represent the columns needed to extend $M(n)$ and generate a flat extension in the Classical and Rectangular TMP, respectively. In the case of the Sparse Connected TMP, the transition from a matrix containing the original moments to a flat extension is provided by the so-called border of $\mathcal{C}$. This is the set $\mathcal{C}^+ \setminus \mathcal{C}$, where $\mathcal{C}^+:=\mathcal{C} \cup s\mathcal{C}\cup t\mathcal{C}$. A visual description of these sets is given in Figure~\ref{Figure 6} (resp. Figure \ref{Figure 7}) for the set $\mathcal{C}$ of monomials represented in Figure \ref{Figure 2} (resp. Figure \ref{Figure 3}). 

\psset{unit=0.7mm, linewidth=0.7\pslinewidth} 
\begin{figure}[!ht]
\begin{center}
\begin{pspicture}(165,65)

\rput(0,22){

\psline(10,20)(75,20)
\psline(10,40)(75,40)
\psline(10,60)(75,60)
\psline(10,80)(75,80)
\psline(10,20)(10,85)
\psline(30,20)(30,85)
\psline(50,20)(50,85)
\psline(70,20)(70,85)

\pscircle[fillstyle=solid,fillcolor=black!10](10,20){4}
\put(9,19){$1$} 
\pscircle[fillstyle=solid,fillcolor=black!10](30,20){4}
\put(29,19){$s$}
\pscircle[fillstyle=solid,fillcolor=black!10](50,20){4}
\put(48,18.5){$s^2$}
\pscircle[fillstyle=solid,fillcolor=black!10](10,40){4}
\put(9,39){$t$}
\pscircle[fillstyle=solid,fillcolor=black!10](10,60){4}
\put(8.5,58.5){$t^2$}
\pscircle[fillstyle=solid,fillcolor=black!10](30,40){4}
\put(28,39){$st$}

\psframe[fillstyle=solid,fillcolor=black!28](67,17)(73,23)
\put(68,18.5){$s^3$} 
\psframe[fillstyle=solid,fillcolor=black!28](47,37)(53.6,43.7)
\put(47,39){$s^2t$}
\psframe[fillstyle=solid,fillcolor=black!28](27,57)(33.8,63.4)
\put(27.5,58.6){$st^2$}
\psframe[fillstyle=solid,fillcolor=black!28](7,77)(13,83)
\put(8.5,78.3){$t^3$}

}

\rput(80,12.5){

\psline(10,20)(75,20)
\psline(10,40)(75,40)
\psline(10,60)(75,60)
\psline(10,80)(75,80)
\psline(10,20)(10,85)
\psline(30,20)(30,85)
\psline(50,20)(50,85)
\psline(70,20)(70,85)

\pscircle[fillstyle=solid,fillcolor=black!10](10,20){4}
\put(9,19){$1$} 
\pscircle[fillstyle=solid,fillcolor=black!10](30,20){4}
\put(29,19){$s$}
\pscircle[fillstyle=solid,fillcolor=black!10](50,20){4}
\put(47.6,18.6){$s^2$}
\pscircle[fillstyle=solid,fillcolor=black!10](10,40){4}
\put(9,39){$t$}
\pscircle[fillstyle=solid,fillcolor=black!10](30,40){4}
\put(28,39){$st$}
\pscircle[fillstyle=solid,fillcolor=black!10](50,40){4}
\put(46,38.5){$s^2t$}

\psframe[fillstyle=solid,fillcolor=black!28](67,17)(73,23)
\put(68,18.5){$s^3$} 
\psframe[fillstyle=solid,fillcolor=black!28](67,37)(74,43.5)
\put(67,39){$s^3t$}
\psframe[fillstyle=solid,fillcolor=black!28](47,57.5)(55.5,64)
\put(47,58.5){$s^2t^2$}
\psframe[fillstyle=solid,fillcolor=black!28](27,57)(33.4,63.4)
\put(27,58.5){$st^2$}
\psframe[fillstyle=solid,fillcolor=black!28](7,57)(13,63)
\put(8.5,58.5){$t^2$}

}

\end{pspicture}
\end{center}
\caption{\small{Diagrams of monomial powers in the Classical Truncated Moment Problem (left, circles) and the Rectangular Truncated Moment Problem (right, circles), and for their respective flat extensions (circles and squares)}}
\label{Figure 5}
\end{figure}



\psset{unit=0.7mm, linewidth=.75\pslinewidth} 
\begin{figure}[!ht]
\begin{center}
\begin{pspicture}(165,70)

\rput(-2,20){

\psline(10,20)(75,20)
\psline(10,40)(75,40)
\psline(10,60)(75,60)
\psline(10,80)(75,80)
\psline(10,20)(10,85)
\psline(30,20)(30,85)
\psline(50,20)(50,85)
\psline(70,20)(70,85)

\pscircle[fillstyle=solid,fillcolor=black!10](10,20){4}
\put(9,19){$1$} 
\pscircle[fillstyle=solid,fillcolor=black!10](30,20){4}
\put(29,19){$s$}
\pscircle[fillstyle=solid,fillcolor=black!10](30,40){4}
\put(28,39){$st$}

\psframe[fillstyle=solid,fillcolor=black!28](47,17)(53.4,23.6)
\put(48.5,19){$s^2$} 
\psframe[fillstyle=solid,fillcolor=black!28](47,37)(53.7,43)
\put(47,39){$s^2t$}
\psframe[fillstyle=solid,fillcolor=black!28](7,37)(13,43)
\put(9,39){$t$}
\psframe[fillstyle=solid,fillcolor=black!28](27,57)(33.5,63.5)
\put(27,59){$st^2$}

}

\rput(77,20){

\psline(10,20)(75,20)
\psline(10,40)(75,40)
\psline(10,60)(75,60)
\psline(10,80)(75,80)
\psline(10,20)(10,85)
\psline(30,20)(30,85)
\psline(50,20)(50,85)
\psline(70,20)(70,85)

\pscircle[fillstyle=solid,fillcolor=black!10](10,20){4}
\put(9,19){$1$} 
\pscircle[fillstyle=solid,fillcolor=black!10](30,20){4}
\put(29,19){$s$}
\pscircle[fillstyle=solid,fillcolor=black!10](50,20){4}
\put(48,18.5){$s^2$}
\pscircle[fillstyle=solid,fillcolor=black!10](10,40){4}
\put(9,39){$t$} 
\pscircle[fillstyle=solid,fillcolor=black!10](30,40){4}
\put(28,39){$st$}
\pscircle[fillstyle=solid,fillcolor=black!10](50,40){4}
\put(47,38.5){$s^2t$}
\pscircle[fillstyle=solid,fillcolor=black!10](30,60){4}
\put(27,58.5){$st^2$}


}

\end{pspicture}
\end{center}
\caption{\small{Diagrams of monomials for {$\mathcal{C}=\{1,s,st\}$} (left, circles), $\mathcal{C}^+=\{1,s,st,t,s^2,s^2t,st^2\}$ (right) and $\mathcal{C}^+\setminus\mathcal{C}$ (left, squares)}}
\label{Figure 6}
\end{figure}

\psset{unit=.75mm, linewidth=.75\pslinewidth} 
\begin{figure}[!ht]
\begin{center}
\begin{pspicture}(85,75)

\rput(0,22){

\psline(10,20)(86,20)
\psline(10,40)(86,40)
\psline(10,60)(86,60)
\psline(10,80)(86,80)
\psline(10,20)(10,93)
\psline(30,20)(30,93)
\psline(50,20)(50,93)
\psline(70,20)(70,93)

\pscircle[fillstyle=solid,fillcolor=black!10](10,20){4}
\put(9,19){$1$} 
\pscircle[fillstyle=solid,fillcolor=black!10](30,20){4}
\pscircle[fillstyle=solid,fillcolor=black!10](10,60){4}
\put(8.5,58.5){$t^2$}
\pscircle[fillstyle=solid,fillcolor=black!10](10,40){4}
\put(29,19){$s$}
\pscircle[fillstyle=solid,fillcolor=black!10](30,40){4}
\put(28,39){$st$}
\pscircle[fillstyle=solid,fillcolor=black!10](50,40){4}
\put(47,39){$s^2t$}
\pscircle[fillstyle=solid,fillcolor=black!10](70,40){4}
\put(67,39){$s^3t$}
\put(87,39){$\boldsymbol{\cdots}$}

\psframe[fillstyle=solid,fillcolor=black!28](7,77)(13,83)
\put(8.5,78.6){$t^3$} 
\psframe[fillstyle=solid,fillcolor=black!28](26,57)(33,63)
\put(27,58.6){$st^2$}
\psframe[fillstyle=solid,fillcolor=black!28](46,57)(54.6,63)
\put(46.3,59){$s^2t^2$}
\psframe[fillstyle=solid,fillcolor=black!28](66,57)(75,63)
\put(67,58.6){$s^3t^2$}
\psframe[fillstyle=solid,fillcolor=black!28](47,17)(53,23)
\put(48.2,18.5){$s^2$}
\put(87,59){$\boldsymbol{\cdots}$}

\put(9,39){$t$}
}

\end{pspicture}
\end{center}
\caption{\small{Monomial diagram for {$\mathcal{C}=\{1,s,t,st,t^2,s^2t,s^3t,\ldots\}$} (circles) and {$\mathcal{C}^+ \setminus \mathcal{C}=\{s^2,st^2,t^3,s^2t^2,s^3t^2,\ldots\}$ (squares)}.} }
\label{Figure 7}
\end{figure}

The reader will notice that, while our Theorem \ref{trnctmmnt} promotes the given linear functional $L$ from the subspace $B$ to $B_p$ (thus increasing the linear dimension of the subspace by $1$ or, alternatively, by requiring that $\dim \ B_p \setminus B =1$), the Flat Extension Theorem demands both a finite dimensional $B$ together with a collection of polynomials that describe each monomial of maximum degree in terms of monomials of lower degree. On the one hand, to use Theorem \ref{trnctmmnt} we need to find only one polynomial $p$ satisfying the key property (\ref{p-existence}). On the other hand, we need to find several polynomials to claim flatness; however, the moment matrix $M(n)$ holds the key, since each such polynomial arises from a linear equation involving the columns of the moment matrix. How the two types of results mesh together is an intriguing matter. In future work, we plan to return to this matter, and focus on the connections between our results here and the Flat Extension Theorem. 


\subsection {{Connections with the Subnormal Completion Problem}} \ We end this section with a natural connection to the SCP for $2$--variable weighted shifts, studied by R.E. Curto, S.H. Lee and J. Yoon \cite{CLY} and by D. Kimsey (\cite{Kimsey2014,Kimsey2016}). To easily exemplify the connections, we will focus on TMP whose representing measures (when they exist) have support in the nonnegative quadrant $\mathbb{R}_+ \times \mathbb{R}_+$. For the relevant terminology and basic results, we refer the reader to \cite{CLY} and \cite{Kimsey2016}. Very briefly, given an initial (finite) family of weights one first forms the associated moments. The SCP then asks for the existence of a positive Radon probability measure on the unit square which interpolates these moments. Now recall that, given two double-indexed positive sequences of weights $\alpha _{\mathbf{k}},\beta _{\mathbf{k}}\in \ell ^{\infty }(\mathbb{Z}_{+}^{2})$, both bounded by $1$, the $2$--variable weighted shift $\mathbf{T}\equiv
(T_{1},T_{2})$\ acts on $\ell^2(\naturals_0^2)$ and is defined by 
\begin{equation*}
T_{1}e_{\mathbf{k}}:=\alpha _{\mathbf{k}}e_{\mathbf{k+}\varepsilon _{1}}, \ \ T_{2}e_{\mathbf{k}}:=\beta _{\mathbf{k}}e_{\mathbf{k+}\varepsilon _{2}},
\end{equation*}
where $\{e_{\mathbf{k}}\}_{k \in \naturals_0^2}$ is the canonical orthonormal basis for $\ell^2(\naturals_0^2)$, and $\mathbf{\varepsilon }_{1}:=(1,0)$ and $\mathbf{\varepsilon }%
_{2}:=(0,1)$. The pair $(T_1,T_2)$ is said to be subnormal if it is the restriction to $\ell^2(\naturals_0^2)$ of a commuting pair of normal operators. Thus, subnormality of $(T_1,T_2)$ requires that $T_1$ and $T_2$ commute, and it is easy to check that this is equivalent to 
\begin{equation} \label{commuting}
\beta _{\mathbf{k+}\varepsilon_{1}}\alpha _{\mathbf{k}}=\alpha _{\mathbf{k+}\varepsilon _{2}}\beta _{\mathbf{k}}\;\;\,\text{for all }\mathbf{k} \in \naturals_0^2.
\end{equation}
Subnormality of $(T_1,T_2)$ also implies that each $T_i$ is subnormal ($i=1,2$). Hence, for each $j \in \naturals_0$ the sequence $\{\alpha_{(m,j)}\}_{m=0}^{\infty}$ must be associated with a subnormal unilateral weighted shift. Similarly, for each $i \in \naturals_0$ the sequence $\{\beta_{(i,n)}\}_{n=0}^{\infty}$ must be associated with a subnormal unilateral weighted shift. As a result, the subnormality of the pair $(T_1,T_2)$ gives rise to the subnormality of two infinite families of unilateral weighted shifts, one family corresponding to the rows of the associated weight diagram for $(T_1,T_2)$, and the other family corresponding to the columns. 

A natural question arises: beyond the commutativity of $T_1$ and $T_2$, and the subnormality of each row and each column in the weight diagram for $(T_1,T_2)$, what else is needed for the pair to be subnormal? \ This is the so-called Lifting Problem for Commuting Subnormals, which has been studied extensively by R.E. Curto, J. Yoon and others; see, for instance, the recent survey \cite{CurtoSurvey}. 

We recall here a well known characterization of subnormality for unilateral weighted shifts, due to C. Berger (cf. \cite[III.8.16]{Con}) and independently established by R. Gellar and L.J. Wallen \cite{GeWa}: \ $W_{\omega }$ is subnormal if and only if there exists a probability measure $\xi $ supported in $[0,\left\| W_{\omega }\right\| ^{2}]$ such that 
$$
\gamma_{k}(\omega ):=\omega _{0}^{2}\cdot ...\cdot \omega _{k-1}^{2}=\int
t^{k}\;d\xi (t)\;\;\, \textrm{for all } k \in \naturals.
$$
The sequence $\gamma_k(\omega)$ is called the sequence of moments of $\omega$. For $2$--variable weighted shifts, the appropriate generalization is due to N.P. Jewell and A.R. Lubin \cite{JeLu}: \ A $2$--variable weighted shift $(T_{1},T_{2})$ admits a commuting normal extension if and only if there is a probability measure $\mu $
defined on the $2$--dimensional rectangle $R=[0,a_{1}]\times \lbrack 0,a_{2}]$
($a_{i}:=\left\| T_{i}\right\| ^{2}$) such that $\gamma _{\mathbf{k}%
}=\int_{R}\mathbf{t}^{\mathbf{k}}d\mu (\mathbf{t}):=\int_{R}t_{1}^{k_{1}}t_{2}^{k_{2}}\;d\mu (t_{1},t_{2})$ \ $(\textrm{for all } \mathbf{%
k\in }\naturals_0^{2}$), where the moments $\gamma_{\mathbf{k}}$ are defined by
\begin{equation*} 
\gamma _{\mathbf{k}}(\alpha ,\beta )\!:=\!\left\{ 
\!\begin{array}{cc}
1 & \text{if }\mathbf{k}=0 \\ 
\alpha _{(0,0)}^{2}\cdot ...\cdot \alpha _{(k_{1}-1,0)}^{2}\cdot \beta
_{(k_{1},0)}^{2}\cdot ...\cdot \beta _{(k_{1},k_{2}-1)}^{2} & \text{if }
\mathbf{k\in }\mathbb{Z}_{+}^{2}\text{, }\mathbf{k}\neq 0
\end{array}
\right\} \!.
\end{equation*} 

\vspace{0.2cm}

\begin{dfn}
\label{subnormalcompletion}Given $m\geq 0$ and a finite family of positive
numbers $\Omega _{m}\equiv \{(\alpha _{\mathbf{k}},\beta _{\mathbf{k}%
})\}_{\left| \mathbf{k}\right| \leq m}$, we say that a $2$--variable weighted
shift $\mathbf{T}\equiv (T_{1},T_{2})$ with weight sequences $\alpha _{%
\mathbf{k}}^{\mathbf{T}}$ and $\beta _{\mathbf{k}}^{\mathbf{T}}$ is a
subnormal completion of $\Omega _{m}$ if (i) $\mathbf{T}$ is subnormal, and
(ii) $(\alpha _{\mathbf{k}}^{\mathbf{T}},\beta _{\mathbf{k}}^{\mathbf{T}%
})=(\alpha _{\mathbf{k}},\beta _{\mathbf{k}})$ whenever $\left| \mathbf{k}%
\right| \leq m$.
\end{dfn}

\begin{exm}
When $m=1$, we shall let $a:=\alpha _{00}^{2}$, $b:=\beta _{00}^{2}$, $%
c:=\alpha _{10}^{2}$, $d:=\beta _{01}^{2}$, $e:=\alpha _{01}^{2}$ and $%
f:=\beta _{10}^{2}$. To be consistent with the commutativity of a $2$%
--variable weighted shifts whose weight sequences satisfy (\ref{commuting}),
we shall always assume $af=be$. The moments of $\Omega _{1}$ are 
\begin{equation*}
\left\{ 
\begin{array}{ccc}
\gamma _{00}:=1 &  &  \\ 
\gamma _{01}:=a & \gamma _{10}:=b &  \\ 
\gamma _{02}:=ac & \gamma _{11}:=be & \gamma _{20}:=bd%
\end{array}%
\right. ,
\end{equation*}%
and the associated moment matrix is 
\begin{equation} \label{M1Omega}
M(\Omega _{1}):=\left( 
\begin{array}{ccc}
1 & a & b \\ 
a & ac & be \\ 
b & be & bd%
\end{array}%
\right) .
\end{equation}%
In this case, solving the SCP consists of finding a probability measure $\mu 
$ supported in $[0,\left\|T_1\right\|^2] \times [0,\left\|T_2\right\|^2] \subseteq [0,1]^2$  such that $\int_{\mathbb{R}%
_{+}^{2}}s^{i}t^{j}\;d\mu (s,t)=\gamma _{ij}\;(i,j\geq 0,\;i+j\leq 2)$. \qed
\end{exm}

In what follows, and for simplicity, we will specialize to the case $m=1$ in two variables, and show that the
condition $M(\Omega _{1})\geq 0$ is sufficient for the existence of a subnormal completion.

\begin{thm}
\label{quartic} (\cite[Theorem 5.1]{CLY}) \ Let $\Omega _{1}$ be a quadratic, commutative, initial set of
positive weights, and assume $M(\Omega _{1})\geq 0$. Then there exists a quartic commutative extension $\hat{\Omega}_{3}$ of $\Omega _{1}$
such that $M(\hat{\Omega}_{3})$ is a flat extension of $M(\Omega _{1})$, and 
$M_{s}(\hat{\Omega}_{3})\geq 0$ and $M_{t}(\hat{\Omega}_{3})\geq 0$. As a
consequence, $\Omega _{1}$ admits a subnormal completion $\mathbf{T}_{\hat{%
\Omega}_{\infty }}$. (The family $\Omega _{1}$ is shown in Figure~\ref{initial}.)
\end{thm}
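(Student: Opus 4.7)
The plan is to reduce the theorem to two classical ingredients available in the Curto--Fialkow framework: the Flat Extension Theorem for moment matrices (cf.\ \cite{tcmp1}) together with its companion criterion involving localizing matrices, and the Jewell--Lubin characterization of subnormal commuting pairs (cf.\ \cite{JeLu}).

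First, I would construct explicitly a quartic, commutative family $\hat{\Omega}_{3} \supseteq \Omega_{1}$ whose associated moment matrix $M(\hat{\Omega}_{3})$, indexed by the monomials $\{1, S, T, S^{2}, ST, T^{2}\}$, is a flat extension of the $3 \times 3$ matrix $M(\Omega_{1})$ in \eqref{M1Omega}; that is, $M(\hat{\Omega}_{3}) \geq 0$ and $\rank M(\hat{\Omega}_{3}) = \rank M(\Omega_{1}) =: r$. The construction splits into three cases according to $r \in \{1, 2, 3\}$. When $r = 1$ all entries of $M(\Omega_{1})$ are determined by the first row, and the flat extension is essentially unique. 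When $r = 2$ the unique column-dependence relation in $M(\Omega_{1})$ propagates algebraically to force the moments of order $3$ and $4$. When $r = 3$ the matrix $M(\Omega_{1})$ is invertible, so one must introduce by design three new column-dependence relations expressing the columns indexed by $S^{2}, ST, T^{2}$ as linear combinations of those indexed by $\{1, S, T\}$; here the commutativity identity $af = be$ coming from \eqref{commuting} is used to ensure compatibility.

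Second, I would verify that the localizing matrices $M_{s}(\hat{\Omega}_{3})$ and $M_{t}(\hat{\Omega}_{3})$ are positive semidefinite. These matrices encode that the representing measure is supported in the half-planes $\{s \geq 0\}$ and $\{t \geq 0\}$, and their entries are shifted versions of entries of $M(\hat{\Omega}_{3})$. In each rank case their positivity reduces to inequalities already implied by $M(\Omega_{1}) \geq 0$, most notably the nonnegativity of the $2 \times 2$ principal minors $ac - a^{2} \geq 0$ and $bd - b^{2} \geq 0$.

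Third, I would appeal to the Curto--Fialkow Flat Extension Theorem to produce a finitely atomic positive Radon probability measure $\mu$ supported in $[0, \|T_{1}\|^{2}] \times [0, \|T_{2}\|^{2}]$ whose moments up to order $4$ coincide with the entries of $M(\hat{\Omega}_{3})$. Defining then $\gamma_{\mathbf{k}} := \int s^{k_{1}} t^{k_{2}} \, d\mu(s,t)$ for every $\mathbf{k} \in \naturals_{0}^{2}$, one recovers the full moment sequence of a $2$--variable weighted shift $\mathbf{T}_{\hat{\Omega}_{\infty}}$; the Jewell--Lubin criterion guarantees that $\mathbf{T}_{\hat{\Omega}_{\infty}}$ is subnormal, and by construction it completes $\Omega_{1}$. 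The main obstacle is the explicit flat-extension construction in the rank-$3$ case, where $M(\Omega_{1})$ carries no pre-existing column relations: one must design the degree-$3$ and degree-$4$ moments by hand so as to yield three rank-preserving relations among the new columns, while simultaneously preserving the commutativity identity $af = be$, the positivity of $M(\hat{\Omega}_{3})$, and the positivity of both $M_{s}(\hat{\Omega}_{3})$ and $M_{t}(\hat{\Omega}_{3})$. Balancing all these constraints, and verifying that such a choice always exists, is the combinatorial core of the argument.
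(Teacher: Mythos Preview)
Your proposal matches the approach the paper attributes to \cite{CLY}: the paper does not give its own proof of this theorem but cites it and remarks that ``the proof of Theorem~\ref{quartic} given in \cite{CLY} uses localizing matrices to identify entries of a proposed moment matrix extension of the initial moment matrix $M(\Omega_1)$,'' which is exactly the flat-extension-plus-localizing-matrices strategy you outline, followed by the Jewell--Lubin criterion. The paper then observes that its own Theorem~\ref{cmpttrnctmmnt} offers an alternative route to the \emph{existence of a representing measure} (once $K$--positivity of the Riesz functional is verified on polynomials nonnegative on the unit square), but this alternative does not by itself deliver the specific flat-extension structure $M(\hat{\Omega}_3)$ asserted in the statement, so your choice to go through the explicit flat-extension construction is the natural one for the theorem as stated.
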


\setlength{\unitlength}{0.8mm} \psset{unit=0.8mm} 
\begin{figure}[h]
\begin{center}
\begin{picture}(50,40)

\psline(0,0)(40,0)
\psline(0,20)(20,20)
\psline(0,0)(0,40)
\psline(20,0)(20,20)

\put(8,2){\footnotesize{$\sqrt{a}$}}
\put(28,2){\footnotesize{$\sqrt{c}$}}

\put(8,22){\footnotesize{$\sqrt{e}$}}

\put(1,9){\footnotesize{$\sqrt{b}$}}
\put(1,29){\footnotesize{$\sqrt{d}$}}
\put(21,9){\footnotesize{$\sqrt{f}$}}

\end{picture}
\end{center}
\caption{\small{The initial family of weights $\Omega _{1}$}}
\label{initial}
\end{figure}

The proof of Theorem \ref{quartic} given in \cite{CLY} uses localizing matrices to identify entries of a proposed moment matrix extension of the initial moment matrix $M(\Omega_1)$. However, the SCP is a special version of the $[0,1]^2$--TMP, with the extra requirement that all moments must be positive, and the support of a representing measure must be in the unit square. Therefore, Theorem \ref{cmpttrnctmmnt} will again imply the existence of a representing measure once the positivity of the associated Riesz functional on nonnegative polynomials on the unit square can be verified. 

Moreover, with the aid of Theorem \ref{cmpttrnctmmnt}, Theorem \ref{quartic} admits a substantial generalization to the case of infinitely many weights located in any finite number of rows and columns, provided that the natural assumptions on the subnormality of the unilateral weighted shifts associated with those rows and columns are made. For instance, consider the case shown in Figure \ref{Figure 8}, in which two rows (the first and the third) are infinite and generate subnormal unilateral weighted shifts $W^{(1)}$ and $W^{(3)}$. 

\setlength{\unitlength}{0.85mm} \psset{unit=0.85mm} 
\begin{figure}[h]
\begin{center}
\begin{picture}(105,88)

\rput(0,30){

\psline(10,20)(50,20)
\psline(10,40)(106,40)
\psline(10,60)(30,60)
\psline(10,80)(106,80)
\psline(10,20)(10,100)
\psline(30,20)(30,80)
\psline(50,20)(50,40)

\put(16,22){$\alpha_{(0,0)}$} 
\put(36,22){$\alpha_{(1,0)}$}
\put(16,42){$\alpha_{(0,1)}$}
\put(36,42){$\alpha_{(1,1)}$}
\put(56,42){$\alpha_{(2,1)}$}
\put(76,42){$\alpha_{(3,1)}$}
\put(96,42){$\boldsymbol{\cdots}$}

\put(16,62){$\alpha_{(0,2)}$}

\put(16,82){$\alpha_{(0,3)}$}
\put(36,82){$\alpha_{(1,3)}$}
\put(56,82){$\alpha_{(2,3)}$}
\put(76,82){$\alpha_{(3,3)}$}
\put(96,82){$\boldsymbol{\cdots}$}

\put(11,29){$\beta_{(0,0)}$} 
\put(31,29){$\beta_{(1,0)}$} 
\put(51,29){$\beta_{(2,0)}$} 
\put(11,49){$\beta_{(0,1)}$} 
\put(31,49){$\beta_{(1,1)}$} 
\put(11,69){$\beta_{(0,2)}$} 
\put(31,69){$\beta_{(1,2)}$} 
\put(11,49){$\beta_{(0,1)}$} 
\put(11,89){$\beta_{(0,3)}$} 
}
\end{picture}
\end{center}
\caption{\small{Weight diagram for the SCP with two infinite rows, the first and the third. (For each complete square, \ref{commuting} must hold.) In this case, the associated unilateral weighted shifts $W^{(1)}$ and $W^{(3)}$ must be assumed subnormal, as a necessary condition for the solubility of the SCP. } }
\label{Figure 8}
\vspace{-0.3cm}
\end{figure}

\medskip
\noindent Since $B=\{1,s,t,s^2,st,t^2\}$ and the polynomial $1$ is in $B$ (observe also that we always have the moment $\gamma_{(0,0)}=1$) and $K$ is compact (being contained in the rectangle $\left[0,\sup_{\mathbf{k}} \alpha_{\mathbf{k}} \right] \times \left[0,\sup_{\mathbf{k}} \beta_{\mathbf{k}}\right]$), the existence of a solution to the SCP depends on the positivity of the appropriate localized matrices (to keep track of $K$) or, alternatively, on the associated linear functional being positive on the relevant initial collection of nonnegative polynomials.

\bigskip

{\bf Acknowledgments}. The authors wish to thank the referee for a careful reading of the paper, and for many insightful comments and suggestions that helped improve this article. R. Curto was partially supported by a U.S. NSF Grant DMS-1302666 and by a 2017 Simons Foundation Visiting Professor award; he also received travel support from Baden-W\"urttemberg Stiftung. M. Infusino is member of the GNAMPA group of INdAM; she is also indebted to the Baden-Württemberg Stiftung for financial support of this work within the Eliteprogramme for Postdocs. The work of S. Kuhlmann was partially supported by the Ausschuss f\"ur Forschungsfragen (AFF) of the University of Konstanz.

\vspace{0.3cm}

\end{document}